\documentclass[11pt]{amsart}
\usepackage[left=1in,right=1in,top=1.25in,bottom=1.25in]{geometry}
\usepackage{mathtools, amssymb, amsfonts, cancel, array, dsfont}
\usepackage{multirow}
\usepackage{newtxtext}
\usepackage{xcolor}
\usepackage{hyperref}
\hypersetup{hidelinks}
\usepackage{pgfplots}
\usepgfplotslibrary{groupplots}
\pgfplotsset{compat=1.18}
\definecolor{inkmuted}{HTML}{52514E}
\definecolor{gridgray}{HTML}{E4E3DF}

\pgfplotsset{
  smlst/.style={color=blue, dashed, line width=1.5pt},
  smll/.style={color=red, dotted, line width=1.5pt},
  midl/.style={solid, line width=2.0pt},
  larg/.style={color=purple, dashdotted, line width=1.5pt},
  huge/.style={color=brown, solid, line width=1.5pt}
}
 
\newcommand{\datadir}{data}
 
\newcommand{\addrunM}[4]{%
  \addplot[#4]
    table[col sep=comma, x=t, y=E] {\datadir/torus_eta#1_sigma#2_M#3.csv};
  \addlegendentry{$M=#3$}%
}

\newcommand{\addruneta}[4]{%
  \addplot[#4]
    table[col sep=comma, x=t, y=E] {\datadir/torus_eta#1_sigma#2_M#3.csv};
  \addlegendentry{$\eta=#1$}%
}

\newtheorem{theorem}{Theorem}[section]
\newtheorem{theorem2}{Theorem}
\newtheorem{proposition}[theorem]{Proposition}
\newtheorem{lemma}[theorem]{Lemma}

\theoremstyle{definition}

\newtheorem{remark}[theorem]{Remark}
\newtheorem{assump}[theorem2]{Assumption}

\let\pa\partial

\newcommand{\cE}{\mathcal{E}}

\newcommand{\cP}{\mathcal{P}}
\newcommand{\cR}{\mathcal{R}}
\newcommand{\cT}{\mathcal{T}}

\newcommand{\ba}{\mathbf{a}}
\newcommand{\be}{\mathbf{e}}

\newcommand{\bn}{\mathbf{n}}
\newcommand{\bomega}{\boldsymbol{\omega}}
\newcommand{\bp}{\mathbf{p}}
\newcommand{\bphi}{\boldsymbol{\phi}}
\newcommand{\bvarphi}{\boldsymbol{\varphi}}
\newcommand{\br}{\mathbf{r}}
\newcommand{\bs}{\mathbf{s}}
\newcommand{\bu}{\mathbf{u}}
\newcommand{\bv}{\mathbf{v}}
\newcommand{\bw}{\mathbf{w}}
\newcommand{\bx}{\mathbf{x}}

\newcommand{\bz}{\mathbf{z}}
\newcommand{\bzero}{\mathbf{0}}
\newcommand{\bell}{\boldsymbol{\ell}}

\newcommand{\bA}{\mathbf{A}}
\newcommand{\bB}{\mathbf{B}}
\newcommand{\bC}{\mathbf{C}}
\newcommand{\bH}{\mathbf{H}}
\newcommand{\bI}{\mathbf{I}}
\newcommand{\bL}{\mathbf{L}}
\newcommand{\bM}{\mathbf{M}}
\newcommand{\bP}{\mathbf{P}}
\newcommand{\bR}{\mathbf{R}}

\newcommand{\bV}{\mathbf{V}}
\newcommand{\bW}{\mathbf{W}}

\newcommand{\dL}{\mathds{L}}

\newcommand{\rJ}{\mathrm{J}}

\newcommand{\uD}{\underline{D}}

\newcommand{\bbR}{\mathbb{R}}

\newcommand{\wt}{\widetilde}
\newcommand{\wh}{\widehat}

\newcommand{\oc}{\overline{c}}

\newcommand{\op}{\overline{p}}
\newcommand{\ophi}{\overline{\phi}}
\newcommand{\omu}{\overline{\mu}}
\newcommand{\obu}{\overline{\mathbf{u}}}
\newcommand{\oxi}{\overline{\xi}}
\newcommand{\oz}{\overline{z}}

\newcommand{\divG}{\mathrm{div}_\G}
\newcommand{\DeltaG}{\Delta_\G}

\newcommand{\G}{\Gamma}
\newcommand{\dst}{\operatorname{dst}}
\newcommand{\nablaG}{\nabla_\Gamma}

\renewcommand{\d}{\mathrm{d}}
\newcommand{\dt}{\,\mathrm{d}t}
\newcommand{\ds}{\,\mathrm{d}s}

\newcommand{\ddt}{\frac{\mathrm{\d}}{\mathrm{d}t}}
\newcommand{\mean}[1]{\langle #1 \rangle_\G}
\newcommand{\zm}[1]{#1^\#}
\newcommand{\tr}{\operatorname{tr}}

\newcommand{\dtau}{\delta_\tau}

\begin{document}

\title[Stokes--Cahn--Hilliard System: Analysis and Discretization]{Surface Stokes--Cahn--Hilliard System: Analysis and Structure-Preserving Discretization}

\author[M. A. Olshanskii]{Maxim Olshanskii}
\author[M. Shakipov]{Mansur Shakipov}
\address{Department of Mathematics\\ University of Houston\\ Houston, TX 77204}
\email{\{maolshanskiy,\,mshakipov\}@uh.edu}

\thanks{The authors were partially supported by {the} U.S. National Science Foundation under award DMS-2408978.}

\keywords{Surface Stokes--Cahn--Hilliard equation, well-posedness, continuous dependence on data, trace finite element method, stabilization, lateral phase separation}

\subjclass[2020]{Primary 35Q35, 35Q92, 65M60, 76D07; Secondary 35B30, 35D30, 35B65, 35R01, 76D03, 76T06}

\begin{abstract}
We study the surface Stokes--Cahn--Hilliard system governing tangential flow and phase separation in viscous fluid membranes. The equations are posed on a closed, stationary $C^2$ hypersurface $\Gamma\subset\mathbb{R}^{d+1}$, $d=2,3$. For constant viscosity and mobility and a class of regular double-well potentials, we prove global existence and uniqueness of weak solutions for initial phase fields in $H^1(\Gamma)$, with the velocity chosen orthogonal to the Killing fields. These solutions satisfy an energy equality and conserve mass. We establish Lipschitz continuous dependence in energy norms on every finite time interval, including for initial data with different means, and derive higher regularity results for $d=2$ under additional smoothness assumptions on the surface, potential, and initial data. We also develop a fully discrete unfitted finite element method that conserves the total phase and satisfies an exact discrete energy identity. The method combines a penalty on approximate Killing fields with projections in the transport and capillary terms to control these modes while preserving the energy balance.  Numerical experiments examine the identification of Killing fields and illustrate phase separation on a torus and a liposome with experimentally informed material parameters.
\end{abstract}

\maketitle

\section{Introduction}
\label{sec:introduction}

Interest in lateral phase separation in multicomponent lipid membranes is motivated by its connection to the lipid raft hypothesis for cell membranes~\cite{SimonsIkonen1997}, observations of coexisting fluid domains in multicomponent vesicles~\cite{VeatchKeller2003,BaumgartHessWebb2003}, and applications to the engineering of functional liposomes and lipid bilayers~\cite{membr1}.  Continuum models describe domain evolution at experimentally accessible scales by coupling diffuse-interface equations to membrane mechanics and hydrodynamics. Earlier models account for bending elasticity, phase separation, and viscous bulk flow~\cite{LowengrubRatzVoigt2009,SohnTsengLiVoigtLowengrub2010}; more recent studies address the interaction of surface flow, composition, and geometry in stationary and deformable membranes~\cite{BachiniKrauseVoigt2023,BachiniKrauseNitschkeVoigt2023}. In~\cite{WangPalzhanovQuainiOlshanskiiMajd2022lipid}, the authors found that simulations with a surface Navier--Stokes--Cahn--Hilliard model agree well with fluorescence microscopy measurements of domain coarsening for different membrane compositions. These developments motivate the analysis of coupled surface fluid and phase-field equations and numerical methods that respect their conservation and dissipation properties.

In this paper we study the surface Stokes--Cahn--Hilliard system on a closed, stationary hypersurface $\Gamma\subset\mathbb{R}^{d+1}$, $d=2,3$. The system couples incompressible tangential flow to phase separation through advection and capillary forces. It follows from the surface Navier--Stokes--Cahn--Hilliard model by neglecting inertia at small surface Reynolds numbers, an assumption consistent with the material parameters of multicomponent lipid bilayers~\cite{WangPalzhanovQuainiOlshanskiiMajd2022lipid}. Inertia-free surface fluid equations also arise in models coupling membrane flow to bending elasticity~\cite{ArroyoDeSimone2009,TorresSanchezMillanArroyo2019} and active geometric forces~\cite{PorrmannVoigt2024}. For constant viscosity and mobility and a regular double-well potential, we address the well-posedness of a weak formulation and regularity of weak solutions. We also develop a finite element method and study its conservation and stability properties.

There is extensive literature on the analysis of Navier--Stokes--Cahn--Hilliard (NSCH) systems in Euclidean domains; see, e.g.,~\cite{Abels2009,GiorginiMiranvilleTemam2019}. For prescribed evolving surfaces, Elliott and Sales~\cite{ElliottSales2025} proved well-posedness of weak solutions for matched densities and regular or logarithmic potentials. Abels, Garcke, and Poiatti~\cite{AbelsGarckePoiatti2026local,AbelsGarckePoiatti2025global} established local and global well-posedness of strong solutions for different densities and a logarithmic potential. Their continuous dependence estimate for the coupled system uses the higher regularity of strong solutions~\cite[Section~5]{AbelsGarckePoiatti2025global}. For two-dimensional bulk domains, Giorgini, Miranville, and Temam~\cite{GiorginiMiranvilleTemam2019} derive estimates in dual norms and recover continuous dependence of weak solutions in the energy space at positive times through regularization and interpolation. These results concern fluid systems with strictly positive densities. In particular, velocity estimates use the density-weighted kinetic energy and do not directly apply when the inertia coefficient vanishes. Our aim here is to establish, for both $d=2$ and $d=3$, well-posedness of the inertia-free problem and Lipschitz continuous dependence of weak solutions on the initial data in the energy norm, uniformly on every finite time interval $[0,T]$. For the regular potentials considered here, we control the nonlinear potential contribution using the regularity available for weak solutions, which allows us to obtain this estimate directly from the initial time without requiring stronger initial regularity.

We are also interested in relaxing the assumptions on surface smoothness. The weak surface NSCH theory in~\cite{ElliottSales2025} assumes $C^3$ geometry, and the strong theory in~\cite{AbelsGarckePoiatti2026local,AbelsGarckePoiatti2025global} uses $C^6$ geometry. For the scalar Cahn--Hilliard equation, a theory on $C^2$ evolving surfaces is available in~\cite{CaetanoElliott2021}. We assume $C^2$ surface smoothness for the well-posedness analysis of the coupled problem and account explicitly for the geometric assumptions needed for higher regularity of solutions. Keeping track of these assumptions is also relevant to extensions to actively deforming surfaces, where the smoothness of the geometry depends on the regularity of solutions and has to be established.

An additional issue is the possible presence of Killing vector fields, i.e., tangential fields with vanishing surface rate of strain. These fields form the kernel of the surface viscous operator, so the Stokes momentum equation determines the velocity only up to a Killing field. In the full NSCH system, the evolution of this component is governed by the momentum evolution equation and the initial velocity. To address this issue for the inertia-free problem, we show that the capillary force is orthogonal to the Killing fields, ensuring compatibility of the momentum equation, and select a unique velocity by requiring it to be $L^2(\Gamma)$-orthogonal to all Killing fields. This choice matters for the coupled system: adding a Killing field leaves the viscous stress unchanged but generally changes the transport of the phase field. The discrete formulation must likewise control spurious velocity components associated with such fields. 

For the class of regular potentials considered here, we prove existence and uniqueness of a global weak solution for every initial phase field $c_0\in H^1(\Gamma)$. The solution conserves mass and satisfies an energy identity. 
We prove continuous dependence on the initial data in the energy norms: the phase difference in $L^\infty(0,T;H^1(\Gamma))$ and the velocity and chemical potential differences in $L^2(0,T;H^1(\Gamma))$ are bounded by the initial phase perturbation in $H^1(\Gamma)$. The estimate also controls the pressure and the phase time derivative and allows initial data with different means. For $d=2$, we derive higher regularity results under corresponding smoothness assumptions on the surface, potential, and initial data. Related results for the full NSCH system include global strong solutions and regularity in two-dimensional domains, local strong solutions and eventual regularity in three-dimensional domains~\cite{Abels2009,GiorginiMiranvilleTemam2019,AbelsGarckeGiorgini2024}, and global strong solutions on prescribed evolving two-dimensional surfaces~\cite{AbelsGarckePoiatti2025global}.

The paper also introduces a fully discrete \emph{structure-preserving} finite element method that conserves the total phase and satisfies an energy identity for the stabilized discretization.  Finite element methods for the surface Stokes problem have been developed and analyzed in, e.g.,~\cite{OlshanskiiQuainiReuskenYushutin2018,BonitoDemlowLicht2020,JankuhnOlshanskiiReuskenZhiliakov2021,BrandnerJankuhnPraetoriusReuskenVoigt2022}. Numerical methods for surface NSCH equations were considered in~\cite{NitschkeVoigtWensch2012,PalzhanovZhiliakovQuainiOlshanskii2021,BachiniKrauseVoigt2023}, and an extension to a two-phase fluid deformable surface was studied in~\cite{BachiniKrauseNitschkeVoigt2023}. Our discretization uses the trace finite element method (TraceFEM)~\cite{OlshanskiiReuskenGrande2009,OlshanskiiReusken2017}, with an implicitly represented surface and a background mesh that need not fit the surface. We use bulk Taylor--Hood elements for velocity and pressure and continuous finite elements for the phase field and chemical potential. We recover approximate Killing fields from eigenmodes associated with small eigenvalues of a discrete surface viscous operator. For $d=2$, we also introduce an alternative approach based on restrictions of ambient rigid motions to $\Gamma$. This reduces the identification to a six-dimensional eigenvalue problem and recovers the full Killing space when all Killing fields are extrinsic.

Eigenvalue-based identification and removal of Killing fields for the surface Stokes problem was previously studied in~\cite{BonitoDemlowLicht2020}. For the coupled problem, we introduce a penalty on the approximate Killing component. The approach in~\cite{BonitoDemlowLicht2020} restores coercivity by adding a small friction term that vanishes with mesh refinement. Our stabilization instead penalizes only the Killing component. At the continuous level, it corresponds to the coercive bilinear form
\[
\big(D_\G \bu, D_\G \bv \big)_{L^2(\Gamma)} + \beta \big(\cP_E \bu, \cP_E \bv\big)_{L^2(\Gamma)},
\]
where $D_\G$ is the symmetric covariant derivative, $\beta > 0$ is the stabilization parameter, and $\cP_E$ is the projection onto Killing fields. The added term vanishes for the normalized solution $\bu\perp E$. In the discrete method, we use the approximate Killing projection and apply its orthogonal complement in both the transport and capillary terms. This removes the identified Killing component from phase transport while preserving the cancellation underlying the discrete energy law.

The remainder of the paper is organized as follows. Section~\ref{sec:governing-equations} introduces the governing equations and their conservation and dissipation properties. Section~\ref{sec:preliminaries} collects the functional setting and properties of Killing fields. In Section~\ref{sec:weak-formulation}, we establish well-posedness, continuous dependence on the initial data, and higher regularity of weak solutions. Section~\ref{sec:method} presents the spatial and temporal discretizations and their structure-preserving properties. Section~\ref{sec:numerical-experiments} discusses the identification of approximate Killing fields and reports simulations on a torus and a phase-separated liposome. Auxiliary analytical results are proved in Appendix~\ref{sec:auxiliary-results}.

\section{Governing equations} \label{sec:governing-equations}
We consider a closed, connected \(d\)-dimensional hypersurface \(\G\) embedded in \(\bbR^{d+1}\), where \(d=2,3\). We assume that \(\G\) is at least of class \(C^2\); additional smoothness is specified where needed.

Let $\dst$ be the signed distance to $\G$, positive outside. If $\G$ is of class $C^m$, $m \geq 2$, then for sufficiently small $\delta>0$, $\dst$ is of class $C^m$ in $U_\delta:=\{\bx\in\bbR^{d+1}:|\dst(\bx)|<\delta\}$~\cite[Lemma~14.16]{GilbargTrudinger2001}.
In this neighborhood, define the unit normal $\bn:=\nabla\dst$, the tangential projection $\bP:=\bI-\bn\bn^T$, and the closest-point projection $\bp(\bx):=\bx-\dst(\bx)\bn(\bx)$. The constant normal extension of a function $v$ on $\G$ is $v^e:=v\circ\bp$; vector and tensor fields are extended componentwise.

For sufficiently regular scalar and vector fields, we define $\nablaG v:=\nabla v^e$ and $\nablaG\bv:=\bP\nabla\bv^e$, respectively, and denote the symmetric part of the latter by $D_\G\bv$. The surface divergence is $\divG\bv:=\tr(\nablaG\bv)$ and acts row-wise on tensor fields. We use $\DeltaG v:=\divG\nablaG v$ and $\Delta_S\bv:=\bP\divG D_\G\bv$ for the scalar Laplace--Beltrami and surface viscous operators~\cite{JankuhnOlshanskiiReusken2018}.

The space of infinitesimal rigid tangential motions of $\Gamma$, i.e. the Killing vector fields, is
\[
E := \{\bv \in H^1(\G)^{d+1} : \bv\cdot\bn=0,~~ D_\G \bv = 0 \text{ on } \G\}.
\]
For smooth hypersurfaces, $\dim E\leq d(d+1)/2$~\cite[\S III.6, Proposition~6.5]{Sakai1996}. For $C^2$ hypersurfaces, finite-dimensionality follows from Korn's inequality and compactness; see~\cite[Theorem~4.3]{BruersLehrenfeldWardetzky2025}. Thus $E$ is closed in $\bL^2_t(\G):=\{\bv\in L^2(\G)^{d+1}:\bv\cdot\bn=0\}$, giving the $L^2(\G)$-orthogonal decomposition
\begin{equation} \label{eq:decomp}
    \bL^2_t(\G) = E \oplus E^\perp.
\end{equation}
We let $\cP_E$ be the $\bL^2(\G)$-orthogonal projection onto $E$, and $\cP_E^\perp := \operatorname{id} - \cP_E$.

\subsection{The Stokes--Cahn--Hilliard  system} \label{subsec:sch-system}
        We consider the surface Stokes--Cahn--Hilliard  system governing tangential flow and phase separation in viscous fluid membranes: given the initial phase field $c_0$ on $\Gamma$, find the  fluid velocity field $\bu$ tangential to $\Gamma$  and satisfying $\bu \perp E$, the (reduced) fluid pressure  $p$,  the phase field $c$, and  the chemical potential $\mu$ solving
        \begin{subequations} \label{eq:sch}
            \begin{align}
                - 2 \eta \Delta_S \bu + \nablaG p & = - \sigma c \nablaG \mu, \label{eq:sch-mom} \\
                \divG \bu & = 0, \label{eq:sch-div} \\
                \pa_t c + \bu \cdot \nablaG c - M \DeltaG \mu & = 0, \label{eq:sch-phase} \\
                \mu & = \varepsilon^{-1} f_0'(c) - \varepsilon \DeltaG c. \label{eq:sch-chem}
            \end{align}
        \end{subequations}
        Here $\eta>0$ is the fluid viscosity, $\sigma>0$ is the line (surface for $d=3$) tension coefficient, $M>0$ is the mobility, $\varepsilon>0$ is the interface width parameter, and $f_0$ is a suitable double-well potential. 
        
        The inertia-free surface flow equations also occur in one-phase membrane models with Helfrich bending forces~\cite{ArroyoDeSimone2009,TorresSanchezMillanArroyo2019,PorrmannVoigt2024}. It also follows as a formal limit of the standard surface H-model     \cite{NitschkeVoigtWensch2012,PalzhanovZhiliakovQuainiOlshanskii2021} when the surface flow Reynolds number is small and so the inertia terms are neglected. The latter is a realistic assumption given the estimated values of  area density, viscosity and line tension coefficients of multi-component lipid bilayers~\cite{WangPalzhanovQuainiOlshanskiiMajd2022lipid}. 
        \begin{remark}[the $\bu\perp E$ constraint]
        	The surface viscous operator has the Killing fields as its kernel.
        	In the absence of inertia and friction, the momentum equation therefore determines the velocity only up to the addition of a Killing field. We remove this indeterminacy by seeking the tangential velocity 	$L^2(\Gamma)$-orthogonal to all Killing fields. 	An alternative, considered, e.g., in \cite{PorrmannBartelsVoigt2026}, is to add the zeroth-order term 	$\gamma\bu$, with constant $\gamma>0$, to the left-hand side 	of the momentum equation. This term models dissipation due to 	friction with an ambient medium and removes the need to impose 	$\bu\perp E$ explicitly.
        	
        	A key observation we made is that the capillary force is 	$L^2(\Gamma)$-orthogonal to all Killing fields, as established 	in Proposition~\ref{prop:cap-force}. This ensures compatibility of 	the momentum equation with the kernel of the viscous operator. 	The system \eqref{eq:sch} is therefore consistent and, as shown
        	by the energy law below, dissipative without an additional 	friction term. If $\gamma\bu$ is included, testing the momentum 	equation with Killing fields shows that $\bu\perp E$ holds
        	automatically. The energy law then contains the additional 	dissipation term $\gamma\|\bu\|^2$, and the analysis of this 	paper extends with minor modifications.
        \end{remark}

We will write $(\cdot,\cdot)$ for the $L^2(\G)$ inner product of scalar, vector, or tensor fields, with Euclidean or Frobenius contraction as appropriate, and $\|v\|:=(v,v)^{1/2}$. 

\begin{proposition}[capillary force is orthogonal to Killing fields] \label{prop:cap-force} 
        Assume $c$ and $\mu$ solving \eqref{eq:sch} are sufficiently smooth. The following orthogonality property holds:
    \begin{equation}\label{eq:orth}
        {(c\nablaG\mu,\bv_E)=0}\qquad  \text{for any }~ \bv_E\in E.
    \end{equation}  
    \end{proposition}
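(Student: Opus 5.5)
The plan is to show that $c\nablaG\mu$ is, up to surface gradients, the surface divergence of a symmetric tangential tensor. Gradients are orthogonal to Killing fields because Killing fields are divergence-free, and symmetric tangential tensors are orthogonal to $\nablaG\bv_E$ because $D_\G\bv_E=0$. Only the chemical potential equation \eqref{eq:sch-chem} is used; the other equations play no role.

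\emph{Step 1: Killing fields are solenoidal.} For $\bv_E\in E$ we have $\divG\bv_E=\tr(\nablaG\bv_E)=\tr(D_\G\bv_E)=0$. Hence, for any sufficiently smooth scalar $g$, integration by parts on the closed surface $\G$ gives $(\nablaG g,\bv_E)=-(g,\divG\bv_E)=0$. This uses $\bv_E\cdot\bn=0$, which kills the curvature term $\int_\G \kappa g\,\bv_E\cdot\bn$ in the surface divergence theorem.

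\emph{Step 2: Rewrite the capillary force.} Write $c\nablaG\mu=\nablaG(c\mu)-\mu\nablaG c$, so that by Step~1
\[
(c\nablaG\mu,\bv_E)=-(\mu\nablaG c,\bv_E).
\]
Inserting \eqref{eq:sch-chem} gives
\[
\mu\nablaG c=\varepsilon^{-1}\nablaG\big(f_0(c)\big)-\varepsilon\,\DeltaG c\,\nablaG c .
\]
The first term is again a gradient and drops out by Step~1. It therefore remains to show $(\DeltaG c\,\nablaG c,\bv_E)=0$.

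\emph{Step 3: Tensor identity.} The identity to verify is
\[
\bP\divG(\nablaG c\,\nablaG c^T)=\DeltaG c\,\nablaG c+\tfrac12\nablaG|\nablaG c|^2 .
\]
Here one must be careful with the surface calculus, since tangential derivatives of tangential fields produce normal components proportional to the Weingarten map. The projection $\bP$ removes these, and only tangential components matter when pairing with $\bv_E$. The gradient term $\tfrac12\nablaG|\nablaG c|^2$ is orthogonal to $\bv_E$ by Step~1.

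\emph{Step 4: Conclude.} Set $\bA:=\nablaG c\,\nablaG c^T$, which is tangential on both sides ($\bA\bn=0$, $\bn^T\bA=0$). For such $\bA$ and tangential $\bv_E$, integration by parts gives
\[
(\divG\bA,\bv_E)=-(\bA,\nablaG\bv_E).
\]
The curvature boundary-type term is of the form $\kappa\,\bv_E^T\bA\bn$, which vanishes. Since $\bA$ is symmetric,
\[
(\bA,\nablaG\bv_E)=(\bA,D_\G\bv_E)=0 .
\]
Combining Steps~2--4 yields \eqref{eq:orth}.

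\emph{Main obstacle.} I expect the main difficulty to be Steps~3--4 under only $C^2$ regularity of $\G$. The tensor identity involves second surface derivatives of $c$ and derivatives of $\bP$, hence curvature, which is only $C^0$. I would first verify the identity in the weak form directly: expand $(\DeltaG c\,\nablaG c,\bv_E)=-(\nablaG c,\nablaG(\nablaG c\cdot\bv_E))$ and compute
\[
\nablaG(\nablaG c\cdot\bv_E)=(\nablaG\bv_E)^T\nablaG c+(\nablaG^2 c)\,\bv_E ,
\]
where $\nablaG^2 c=\bP\nabla(\nablaG c)^e$ is the covariant Hessian paired with tangential vectors. This Hessian is symmetric in tangential directions. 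The first term gives $-\nablaG c^T D_\G\bv_E\nablaG c=0$ (only the symmetric part of $\nablaG\bv_E$ survives in this quadratic form), and the second gives $-\tfrac12(\nablaG|\nablaG c|^2,\bv_E)=0$ by Step~1. This route only requires $c\in H^2(\G)$ and $\G\in C^2$.
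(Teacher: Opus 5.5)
Your proposal is correct and is essentially the paper's proof. Steps 1--2 coincide with it, and your weak-form computation under ``Main obstacle'' is exactly the paper's argument: integrate $(\DeltaG c\,\nablaG c,\bv_E)$ by parts, kill the $(\nablaG\bv_E)^T\nablaG c$ contribution by skew-symmetry of $\nablaG\bv_E$, and use $[\nablaG^2 c]\nablaG c=\tfrac12\nablaG|\nablaG c|^2$ together with $\divG\bv_E=0$; your Steps 3--4 just repackage this same computation through the symmetric tangential stress $\nablaG c\,\nablaG c^T$, which nicely explains why the cancellation occurs but does not change the proof.
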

\begin{proof}
    To check \eqref{eq:orth}, we use the divergence free property of Killing fields, $\divG \bv_E = 0$,  together with the equation for the chemical potential, $\mu = \varepsilon^{-1} f_0'(c) - \varepsilon \DeltaG c$, to see that
        \[
            {-(c\nablaG\mu,\bv_E)=(\bv_E\cdot\nablaG c,\mu)=\varepsilon^{-1}(\bv_E\cdot\nablaG c,f_0'(c))-\varepsilon(\bv_E\cdot\nablaG c,\DeltaG c).}
        \]
        Since $f_0'(c) \nablaG c = \nablaG f_0(c)$ and $\divG \bv_E = 0$, the first term above vanishes. Integrating the remaining term by parts yields
        \[
            {-(\bv_E\cdot\nablaG c,\DeltaG c)=([\nablaG\bv_E]\nablaG c,\nablaG c)+(\bv_E,[\nablaG^2 c]\nablaG c).}
        \]
        The first term above vanishes because $\nablaG \bv_E$ is skew-symmetric. For the last term, we note that $\nablaG^2 c=[\nablaG^2 c]^T$ and so $\frac12 \nablaG |\nablaG c|^2 = [\nablaG^2 c]  \nablaG c$. Hence the last term reads ${\frac12(\bv_E,\nablaG|\nablaG c|^2)}$ and thus also vanishes because $\divG \bv_E = 0$.
\end{proof}

        Smooth solutions of \eqref{eq:sch} satisfy an energy balance and conserve the total phase. 
        To derive an energy identity, we multiply the momentum equation \eqref{eq:sch-mom} by $\bu$, the phase field equation \eqref{eq:sch-phase} by $\sigma \mu$, and the chemical potential equation \eqref{eq:sch-chem} by $\sigma \pa_t c$. {Integrating over $\G$ and using} $(-\Delta_S \bu, \bu) = (D_\G \bu, D_\G \bu)$, $(\nablaG p, \bu) = -(p, \divG\bu) =0$, and $(-\DeltaG v, w) = (\nablaG v, \nablaG w)$, we deduce
        \begin{subequations} \label{eq:en-law-derivation-1}
            \begin{align}
                2 \eta \|D_\G \bu\|^2 & = - \sigma (c \nablaG \mu, \bu), \label{eq:en-law-derivation-1-mom} \\
                \sigma (\pa_t c, \mu) + \sigma (\bu \cdot \nablaG c, \mu) + \sigma M \|\nablaG \mu\|^2 & = 0, \label{eq:en-law-derivation-1-phase} \\
                \sigma (\mu, \pa_t c) & = \sigma \left[ \varepsilon^{-1} (f_0'(c), \pa_t c) + \varepsilon (\nablaG c, \nablaG \pa_t c) \right]. \label{eq:en-law-derivation-1-chem}
            \end{align}
        \end{subequations}
        Noting that $(f_0'(c), \pa_t c) = \ddt (f_0(c), 1)$ and $(\nablaG c, \nablaG \pa_t c) = (\nablaG c, \pa_t \nablaG c) = \frac12 \ddt \|\nablaG c\|^2$, the chemical potential equation \eqref{eq:en-law-derivation-1-chem} can be further rewritten as
        \begin{equation} \label{eq:cE}
            \sigma (\mu, \pa_t c) = \ddt \cE[c(t)], \qquad \cE[v] := \sigma \left( \frac{\varepsilon}{2} \|\nablaG v\|^2 + \varepsilon^{-1} (f_0(v), 1) \right).
        \end{equation}
        Combining the terms in \eqref{eq:en-law-derivation-1-mom}, \eqref{eq:en-law-derivation-1-phase} and \eqref{eq:cE} noting that $(\bu \cdot \nablaG c, \mu) = - (\bu \cdot \nablaG \mu, c)$ since $\divG \bu = 0$, and integrating from $0$ to $t$ for $t > 0$, we arrive at the following energy law:
        \begin{equation} \label{eq:en-law}
            \cE[c(t)] + \int_0^t \left[ 2 \eta \|D_\G \bu\|^2 + \sigma M \|\nablaG \mu\|^2 \right] \ds = \cE[c_0].
        \end{equation}
        Moreover, integrating the phase field equation \eqref{eq:sch-phase} over $\G$ leads to the conservation of total phase: 
        \[
        {(c(t),1)=(c_0,1).}
        \]

    
\section{Preliminaries} \label{sec:preliminaries} 
        Let $L^p(\G)$ and $W^{k,p}(\G)$ be, respectively, the Lebesgue and Sobolev spaces on $\G$; we write $H^k(\G) := W^{k,2}(\G)$ and $W^{0,p}(\G) = L^p(\G)$. For vector- and tensor-valued fields, we shall use boldface notation (e.g., $\bL^p(\G)$) and double boldface ($\dL^p(\G)$), respectively. We shall keep the same convention when $\G$ is replaced with another spatial domain.

        For $\G$ of class $C^m$, $C^m(\G)$ is dense in $W^{k,p}(\G)$ for $k \leq m$ and $p \in [1,\infty)$ \cite[Theorem 3.6]{BenavidesNochettoShakipov2026-scalar}. The definitions of differential operators {from Section~\ref{sec:governing-equations}} can thus be extended to Sobolev spaces via density.

       We use the following subspaces for $k\geq0$ and $p\in[1,\infty]$:
        \begin{equation} \label{eq:subspaces}
            \begin{split}
                C^k_\#(\G) := \{v \in C^k(\G) : (v, 1) = 0\}, \quad & \bC^k_t(\G) := \{\bv \in \bC^k(\G) : \bv \cdot \bn = 0\}, \\
                W^{k,p}_\#(\G) := \{v \in W^{k,p}(\G) : (v, 1) = 0\}, \quad & \bW^{k,p}_t(\G) := \{\bv \in \bW^{k,p}(\G) : \bv \cdot \bn = 0\}.
            \end{split}
        \end{equation}
        It is straightforward to verify that $C^m_\#(\G)$ is dense in $W^{k,p}_\#(\G)$ for $k \leq m$ {and $p\in[1,\infty)$}. For tangential vector fields we have: for $\G$ of class $C^m, m \geq 2$, integer $1 \leq k \leq m-1$, $\bC^{m-1}_t(\G)$ is dense in $\bW^{k,p}_t(\G)$ for $p \in [1, \infty)$ \cite[Lemma 3.10]{BenavidesNochettoShakipov2026-scalar}. Note the loss of regularity due to the tangentiality assumption.
        
        The following Poincar\'e inequality is valid for $\G$ of class $C^2$: for all $v \in H^1_\#(\G)$,
        \begin{equation} \label{eq:poincare}
            \|v\| \leq C_P \|\nablaG v\|.
        \end{equation}

        {To state the identities used below, we introduce the Weingarten map $\bB:=\nablaG\bn$, which is symmetric and satisfies $\bB\bn=\bzero$, and the Ricci curvature map $\bR:=\tr(\bB)\bB-\bB^2$. The next result collects the properties of Killing fields needed in the analysis.}
        \begin{proposition}[properties of Killing fields] \label{prop:Killing}
            Let $\G$ be of class $C^m, m \geq 2$. Then the following properties of Killing vector fields are valid:
            \begin{enumerate}
                \item For all $\bv \in E$, $\divG \bv = 0$.
                \item $E \subseteq \bW^{m-1,p}_t(\G)$ for any $p \in (1,\infty)$.
                \item Decomposition \eqref{eq:decomp} is stable in $\bH^{m-1}(\G)$, i.e., there exists a constant $C > 0$ such that for all $\bv \in \bH^{m-1}_t(\G)$, 
                \begin{equation} \label{eq:stab-Killing-decomp}
                    \|\cP_E \bv\|_{\bH^{m-1}(\G)} + \|\cP_E^\perp \bv\|_{\bH^{m-1}(\G)} \leq C \|\bv\|_{\bH^{m-1}(\G)}.
                \end{equation}
                \item The definition of $\cP_E$ extends to $\bL^p_t(\G), p \in [1,\infty]$ with stability constant $ C(\G, p) > 0$.
            \end{enumerate}
        \end{proposition}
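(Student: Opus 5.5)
We prove the four items in order; item (2), the regularity of Killing fields on a $C^m$ surface, is the main obstacle, and items (3)–(4) follow from it by finite-dimensionality.

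For item (1), note that $D_\G\bv=0$ implies $\tr(D_\G\bv)=0$. Since $\tr$ of a matrix equals $\tr$ of its symmetric part, $\divG\bv=\tr(\nablaG\bv)=\tr(D_\G\bv)=0$.

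For item (2), we bootstrap regularity of Killing fields from $H^1$ to $\bW^{m-1,p}$. We start from the classical identity expressing second derivatives of a vector field through first derivatives of its symmetric gradient (the surface analogue of $\partial_{jk}v_i=\partial_j e_{ik}+\partial_k e_{ij}-\partial_i e_{jk}$). Commuting covariant derivatives introduces curvature terms involving $\bB$ and $\bR$, which are $C^{m-2}$. For $\bv\in E$ this gives, in a distributional sense, a second-order relation of the form
\[
\nablaG^2\bv = \mathcal{K}(\bB,\nablaG\bB)\,\bv + \mathcal{L}(\bB)\,\nablaG \bv,
\]
with coefficients depending on the curvature. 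A more robust route avoids differentiating $\bB$ excessively. We test against the weak form, which shows $\bv$ solves $-\Delta_S\bv=0$, or equivalently $-\Delta_B\bv + \bR\bv=0$ (Bochner-type identity $2\bP\divG D_\G\bv = \Delta_B\bv + \nablaG\divG\bv + \bR\bv$, with $\divG\bv=0$). This is a second-order elliptic system with $C^{m-2}$ zeroth-order coefficient. Since the $\bH^1$ right-hand side is controlled, elliptic regularity on a $C^m$ surface (via local charts, in which the metric coefficients are $C^{m-1}$) yields $\bv\in\bW^{2,p}$ for all $p<\infty$ when $m\ge3$, using Sobolev embedding iteratively. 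Induction on derivatives then gives $\bW^{m-1,p}$. The loss of one derivative reflects $\bn\in C^{m-1}$ and tangentiality.

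The main obstacle is the case $m=2$, where we only need $E\subseteq \bW^{1,p}$ for all $p<\infty$, but $\bR$ is merely $C^0$ and the chart metric only $C^1$. Here we would first try the Korn-type approach. Writing the system in local coordinates with $C^1$ metric, $D_\G\bv=0$ becomes $\mathcal{L}_v g=0$: a first-order overdetermined elliptic system with continuous coefficients. $L^p$ Korn inequalities with continuous coefficients (freezing coefficients, localization, and the Calder\'on--Zygmund-based Korn inequality in $L^p$) give $\|\bv\|_{W^{1,p}}\lesssim \|D\bv\|_{L^p}+\|\bv\|_{L^p}$. Bootstrapping $L^p$ integrability of $\bv$ via Sobolev embedding, starting from $H^1$, closes the argument.

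For item (3), $E$ is finite-dimensional with an $L^2$-orthonormal basis $\{\be_i\}\subset \bH^{m-1}$ by item (2). Hence $\cP_E\bv=\sum_i(\bv,\be_i)\be_i$ satisfies
\[
\|\cP_E\bv\|_{\bH^{m-1}}\le \sum_i\|\be_i\|_{\bH^{m-1}}\|\be_i\|\,\|\bv\|\le C\|\bv\|_{\bH^{m-1}}.
\]
The bound for $\cP_E^\perp\bv=\bv-\cP_E\bv$ follows by the triangle inequality.

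For item (4), we take the same formula as the definition of $\cP_E$ on $\bL^p_t$. Since $\be_i\in\bL^\infty$ (item (2) with large $p$ and Sobolev embedding), Hölder's inequality gives $|(\bv,\be_i)|\le\|\be_i\|_{L^{p'}}\|\bv\|_{L^p}$, and hence $\|\cP_E\bv\|_{L^p}\le C(\G,p)\|\bv\|_{L^p}$. This extension agrees with the $L^2$ projection on $\bL^p\cap\bL^2$.
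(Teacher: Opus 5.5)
Items (1), (3) and (4) match the paper's argument, and so does item (2) for $m\ge 3$: the paper also reduces (2) to elliptic regularity for $-\Delta_B\bv=\bR\bv$. Note that this is the sign your own identity gives, not $-\Delta_B\bv+\bR\bv=0$, although the sign does not matter for regularity.

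The gap is in your separate argument for $m=2$. An $L^p$ Korn inequality $\|\bv\|_{\bW^{1,p}(\G)}\lesssim\|D_\G\bv\|_{\bL^p(\G)}+\|\bv\|_{\bL^p(\G)}$ obtained by freezing coefficients is an \emph{a priori} estimate for fields already in $\bW^{1,p}$. Applied to an $\bH^1$ field that is merely in $\bL^p$, it does not show $\bv\in\bW^{1,p}$, so your bootstrap never starts; you need a regularity statement. A clean one is available. Take a chart $X\in C^m$ with metric $g_{ij}=\partial_iX\cdot\partial_jX\in C^{m-1}$ and Christoffel symbols $\gamma^k_{ij}\in C^{m-2}$. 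For the lowered field $v_j=g_{jk}v^k$ the Killing equation reads $\partial_iv_j+\partial_jv_i=2\gamma^k_{ij}v_k$. Hence the \emph{flat} symmetric gradient $e_{ij}$ of $(v_j)$ lies in $L^q$ whenever $(v_j)$ does. The distributional identity $\partial_j\partial_kv_i=\partial_je_{ik}+\partial_ke_{ij}-\partial_ie_{jk}$ puts $\partial_kv_i$ and its gradient in $W^{-1,q}$, and Ne\v{c}as' lemma then gives $\partial_kv_i\in L^q_{loc}$. No variable-coefficient Korn inequality is needed. Alternatively, mollify in the chart: since the leading coefficients are $C^1$, Friedrichs' commutator lemma keeps the symmetric gradient of the mollified field bounded in $L^q$, and your a priori estimate passes to the limit. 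With either fix your bootstrap from $\bH^1$ closes.

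For comparison, the paper does not treat $m=2$ separately. It derives the weak Bochner equation $(\nablaG\bv,\nablaG\bw)=(\bR\bv,\bw)$ for all $\bw\in\bH^1_t(\G)$ directly from $D_\G\bv=0$, i.e.\ $\nablaG\bv=-(\nablaG\bv)^T$. It uses the identity $((\nablaG\bv)^T,\nablaG\bw)=(\divG\bv,\divG\bw)-(\bR\bv,\bw)$, which Lemma~\ref{lem:bP-divG-nablaGT} proves for $\bH^1_t$ fields on $C^2$ surfaces. Your strong-form Bochner identity needs $C^3$ geometry and second derivatives; for $m\ge3$ density repairs this, which is likely why $m=2$ looked like an obstacle. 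In weak form it is not one: in charts the operator has $C^1$ leading coefficients and a continuous zeroth-order coefficient, which is enough for $W^{1,p}$ regularity. The paper cites exactly this, together with higher regularity for the vector Laplacian.

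Conversely, your corrected Korn argument can be iterated using $\gamma^k_{ij}\in C^{m-2}$: if $(v_j)\in W^{l,q}$ with $l\le m-2$, then $e_{ij}\in W^{l,q}$, hence $(v_j)\in W^{l+1,q}$. Converting back to ambient components through the $C^{m-1}$ frame $\partial_kX$ then gives $\bW^{m-1,p}$ for every $m\ge2$ with no elliptic theory at all. That route is more elementary and self-contained; the paper's is shorter because it relies on cited regularity for the vector Laplacian on $C^m$ surfaces.
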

        \begin{proof}
            Property (1) follows from $\divG = \tr(D_\G)$. For property (2), Lemma \ref{lem:bP-divG-nablaGT}  yields that $\bu \in \bH^1_t(\G)$ with $\divG \bu = 0$ satisfies $((\nablaG \bu)^T, \nablaG \bw) = - (\bR \bu, \bw)$ for all $\bw \in \bH^1_t(\G)$. Thus, for $\bv \in E$, the above identity and $D_\Gamma\bv=0$ {imply} $(\nablaG \bv, \nablaG \bw) = (\bR \bv, \bw)$ for all $\bw \in \bH^1_t(\G)$. This is the weak formulation of the Bochner--Laplace problem $- \Delta_B \bv := - \bP \divG \nablaG \bv = \bR \bv$, from which higher regularity results for $\bv \in E$ follow thanks to \cite[Theorems 3.6, 3.7]{BenavidesNochettoShakipov2025-vector}.

            Property (3) follows from the fact that $E$ is finite-dimensional and the regularity of Killing fields. For property (4), we let $\bv \in \bL^2_t(\G)$ and write $\cP_E \bv = \sum_{i=1}^{\dim E} (\bv, \be_i) \be_i$, where $\{\be_i\}_{i=1}^{\dim E}$ is an $\bL^2(\G)$-orthonormal basis of $E$. We then see that this definition extends to $\bL^1_t(\G)$ since by (2) and a Sobolev embedding $\be_i \in \bC^0_t(\G)$ and so  we infer $\|\cP_E \bv\|_{\bL^p(\G)} \leq C \|\bv\|_{\bL^p(\G)}$, with the finite $C = \sum_{i=1}^{\dim E} \|\be_i\|_{\bL^p(\G)} \|\be_i\|_{\bL^{p'}(\G)}$. This finishes the proof.
        \end{proof}
        We next define
        \begin{equation} \label{eq:bHk-tperp}
            \bW^{k, p}_{t, \perp}(\G) := \{\bv \in \bW^{k, p}_t(\G) : \cP_E \bv = \bzero\}, \quad k \geq 0, \quad p \in [1, \infty].
        \end{equation}
        {We write $A\lesssim B$ if $A\leq CB$ with a constant $C$ independent of the approximation indices or mesh parameters under consideration; other dependencies are specified when needed.} The following Korn inequality is also valid for $\G$ of class $C^2$:
        \begin{equation} \label{eq:korn}
            \|\bv\|_{\bH^1(\G)} \leq C_K \|D_\G \bv\|, \quad \forall \bv \in \bH^1_{t, \perp}(\G).
        \end{equation}
        Here $C_K = C_K(\G) > 0$. Inequality \eqref{eq:korn} is proved in \cite[Lemma 4.1]{JankuhnOlshanskiiReusken2018} for the complement of $E$ in $\bH^1_t(\G)$ taken orthogonal with respect to $\bH^1(\G)$ rather than $\bL^2(\G)$. This is sufficient for our purposes: since $\dim E < \infty$ and $E \subseteq \bH^1_t(\G)$, there is $C_E = C_E(\G) > 0$ with $\|\be\|_{\bH^1(\G)} \le C_E \|\be\|$ for all $\be \in E$, and writing $\bv \in \bH^1_{t,\perp}(\G)$ as $\bv = \bv_0 + \bw$ with $\bv_0$ in the $\bH^1$-orthogonal complement and $\bw \in E$ gives $\bw = -\cP_E \bv_0$ and $D_\G \bv = D_\G \bv_0$, whence $\|\bv\|_{\bH^1(\G)} \le (1+C_E)\|\bv_0\|_{\bH^1(\G)} \lesssim \|D_\G \bv\|$.

    
    \section{Weak formulation} \label{sec:weak-formulation}
    In this section, we introduce  a notion of weak solution to the surface Stokes--Cahn--Hilliard problem and establish its well-posedness. For $d=2$, we also derive a higher regularity result for {weak} solutions.
  \smallskip
  
        We introduce Dirichlet forms 
        \[ 
            \ba(\bu, \bv) := (D_\G \bu, D_\G \bv)\quad \text{and}\quad a(u, v) := (\nablaG u, \nablaG v)
        \] 
        for $\bu, \bv \in \bH^1(\G)$ and  $u, v \in H^1(\G)$, respectively. For $w \in H^1(\G)$, let $\mean{w} := |\G|^{-1}(w,1)$ and $\zm{w} := w - \mean{w}$. We define the  \emph{centered} trilinear form $b : \bH^1(\G) \times H^1(\G) \times H^1(\G) \to \bbR$ as 
        \begin{equation} \label{eq:b-form}
            {b(\bu,v,w):=(\bu\cdot\nablaG v,\zm{w}),}
        \end{equation}
        for all $(\bu, v, w) \in \bH^1(\G) \times H^1(\G) \times H^1(\G)$. The form $b$ is continuous due to the $H^1(\G) \hookrightarrow L^4(\G)$ Sobolev embedding and $H^1(\G)$-stability of $\zm{(\cdot)}$. We note that $b(\bu,v,1)=0$ for all $(\bu,v) \in \bH^1(\G) \times H^1(\G)$. Furthermore, if $\bu \cdot \bn = 0, \divG \bu=0$, then $b$ agrees with the usual (non-centered) convection form and is skew-symmetric with respect to its second and third arguments. In the momentum equation, using the centered form $b(\bw, c, \mu)$ instead of ${-(\bw\cdot\nablaG\mu,c)}$ corresponding to the strong form \eqref{eq:sch-mom} amounts only to the pressure shift $p\mapsto p+\sigma\zm{(c\mu-\mean{\mu}c)}$, since
        \[
            \begin{split}
                b(\bw, c, \mu) & = (\bw \cdot \nablaG c, \zm{\mu}) = (\bw \cdot \nablaG c, \mu) - \mean{\mu} (\bw, \nablaG c) \\
                & = - (\bw \cdot \nablaG \mu, c) - (\divG \bw, c \mu) - (\bw, \nablaG [\mean{\mu}  {c}]) \\
                & = - (\bw \cdot \nablaG \mu, c) + (\bw, \nablaG [ {(c \mu)} - \mean{\mu}  {c}]).
            \end{split}
        \]
        This consistent modification of the momentum equation builds a direct cancellation of the capillary force term $b(\bw, c, \mu)$ and the convective term $b(\bu, c, v)$ in the phase field equation when testing with $\bw = \bu, v=\mu$, thereby circumventing condition $\divG \bu = 0$. This is critical for developing the structure-preserving numerical method where the divergence constraint is hard to enforce pointwise.
        
        For the rest of the paper, we make the following assumption on the double-well potential $f_0$.
        \begin{assump} \label{assump:double-well}
            The double-well potential $f_0 \in C^{2,1}_{loc}(\bbR)$ satisfies $f_0 \geq 0$ and the following growth bound for some $L > 0$:
            \begin{equation} \label{eq:double-well}
                |f_0'''(s)| \leq L (1 + |s|) \quad \text{for a.e. }s\in\bbR. \tag{DW}
            \end{equation}
        \end{assump}
        Assumption \ref{assump:double-well} is sufficient to prove the existence of weak solutions and continuous dependence results. For higher regularity statements we shall naturally require additional smoothness on $f_0$. Condition \eqref{eq:double-well} is not very restrictive; most regular potentials such as the Ginzburg--Landau potential $f_{\mathrm{quart}}(s) = \frac{(s^2-1)^2}{4}$  satisfy it.
        
        Let $T > 0$ be the terminal time and $I := (0, T)$ be a time interval. The weak formulation of the Stokes--Cahn--Hilliard problem studied in this paper reads: Given initial datum $c_0 \in H^1(\G)$, find 
        \begin{equation}\label{eq:trialSpace}
            \begin{alignedat}{2}
                &\bu\in L^2(I;\bH^1_{t,\perp}(\G)),\qquad &&c\in L^\infty(I;H^1(\G))\cap H^1(I;H^1(\G)'),\\
                &p\in L^2(I;L^2_\#(\G)), &&\mu\in L^2(I;H^1(\G)),
            \end{alignedat}
        \end{equation}
        such that for a.e. $t \in I$ and all $(\bw, q, v, g) \in \bH^1_{t, \perp}(\G) \times L^2_\#(\G) \times H^1(\G) \times H^1(\G)$,
        \begin{subequations} \label{eq:weak}
            \begin{align}
                2 \eta \, \ba(\bu, \bw) - (p, \divG \bw) & = \sigma b(\bw, c, \mu), \label{eq:weak-mom} \\
                (q, \divG \bu) & = 0, \label{eq:weak-div} \\
                \langle \pa_t c, v \rangle_{H^1(\G)' \times H^1(\G)} + b(\bu, c, v) + M a(\mu, v) & = 0, \label{eq:weak-phase} \\
                (\mu, g) - \varepsilon^{-1} (f_0'(c), g) - \varepsilon a(c, g) & = 0, \label{eq:weak-chem}
            \end{align}
        \end{subequations}
        {with $c(0)=c_0$.}

    \subsection{Well-posedness} \label{subsec:well-pos}
        In this subsection, we establish {existence and uniqueness} of weak solutions to \eqref{eq:weak} 
        and {their} continuous dependence on the initial data. 
        
        \begin{theorem}[existence of weak solutions] \label{thm:existence}
            Let $\G$ be of class $C^2$ and $d \in \{2, 3\}$. For every $c_0\in H^1(\G)$, the weak formulation \eqref{eq:weak} has a global solution $(\bu, p, c, \mu)$ satisfying \eqref{eq:weak}. The weak solution satisfies {the} energy \emph{inequality}
                    \begin{equation*} 
            \cE[c(t)] + \int_0^t \left[ 2 \eta \|D_\G \bu\|^2 + \sigma M \|\nablaG \mu\|^2 \right] \ds \le \cE[c_0]\qquad \text{for a.e. } t \in I.
        \end{equation*}
     Moreover, $(c(t), 1) = (c_0, 1)$ for all $t \in [0,T]$.
        \end{theorem}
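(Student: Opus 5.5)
We will construct solutions by a Faedo--Galerkin scheme for the Cahn--Hilliard part, with the Stokes part solved exactly at each instant. Let $\{\psi_j\}$ be the $L^2(\G)$-orthonormal eigenfunctions of $-\DeltaG$, with $\psi_1$ constant; on a $C^2$ surface they lie in $H^2(\G)$. Let $V_n=\operatorname{span}\{\psi_1,\dots,\psi_n\}$ and let $\Pi_n$ be the $L^2$-projection onto $V_n$. Given $c_n(t)\in V_n$, define
\[
\mu_n=\Pi_n\bigl(\varepsilon^{-1}f_0'(c_n)\bigr)-\varepsilon\DeltaG c_n\in V_n .
\]
Next define $(\bu_n,p_n)\in \bH^1_{t,\perp}(\G)\times L^2_\#(\G)$ as the solution of the Stokes problem \eqref{eq:weak-mom}--\eqref{eq:weak-div} with right-hand side $\sigma b(\cdot,c_n,\mu_n)$. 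This problem is uniquely solvable: Korn's inequality \eqref{eq:korn} gives coercivity of $\ba$ on $\bH^1_{t,\perp}(\G)$. The surface inf-sup condition for $\divG$ holds on $\bH^1_t(\G)$. It transfers to $\bH^1_{t,\perp}(\G)$ because $\divG\cP_E\bw=0$ by Proposition~\ref{prop:Killing}(1), so $\cP_E^\perp$ does not change the divergence. The map $(c_n,\mu_n)\mapsto\bu_n$ is locally Lipschitz on the finite-dimensional space: it is linear in the right-hand side, and $|b(\bw,c,\mu)|\lesssim\|\bw\|_{\bH^1}\|c\|_{H^1}\|\mu\|_{H^1}$. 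We then solve the ODE
\[
(\pa_t c_n,v)+b(\bu_n,c_n,v)+Ma(\mu_n,v)=0\quad\text{for all } v\in V_n,\qquad c_n(0)=\Pi_n c_0 .
\]
Its right-hand side is locally Lipschitz since $f_0\in C^{2,1}_{loc}$, so a local solution exists. Testing with $v=1\in V_n$ and using $b(\cdot,\cdot,1)=0$ gives $(c_n(t),1)=(\Pi_nc_0,1)=(c_0,1)$.

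\textbf{Discrete energy identity.} Test the ODE with $v=\sigma\mu_n$, and note $(\pa_tc_n,\mu_n)=\sigma^{-1}\ddt\cE[c_n]$, since $\Pi_n$ is invisible against $\pa_tc_n\in V_n$. Test the Stokes problem with $\bw=\bu_n$ and $q=p_n$. The centered form produces the exact cancellation $\sigma b(\bu_n,c_n,\mu_n)-\sigma b(\bu_n,c_n,\mu_n)=0$, which is the design feature emphasized before \eqref{eq:b-form}. The result is the discrete version of \eqref{eq:en-law}:
\[
\cE[c_n(t)]+\int_0^t\bigl(2\eta\|D_\G\bu_n\|^2+\sigma M\|\nablaG\mu_n\|^2\bigr)\ds=\cE[\Pi_nc_0].
\]
The right-hand side is bounded uniformly in $n$. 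Indeed, $\Pi_nc_0\to c_0$ in $H^1$, and by \eqref{eq:double-well} $f_0$ grows at most quartically, so $H^1\hookrightarrow L^4$ ($d\le3$) makes $\cE$ continuous on $H^1$. Since $f_0\ge0$, this yields global existence of the ODE solution and uniform bounds: $c_n$ in $L^\infty(I;H^1)$ (the mean is fixed, so Poincar\'e \eqref{eq:poincare} applies), $\bu_n$ in $L^2(I;\bH^1)$ by \eqref{eq:korn}, and $\nablaG\mu_n$ in $L^2(I;L^2)$. Testing the $\mu_n$ definition with $1$ gives $|(\mu_n,1)|\le\varepsilon^{-1}\|f_0'(c_n)\|_{L^1}\lesssim 1+\|c_n\|_{L^3}^3$, so $\mu_n$ is bounded in $L^2(I;H^1)$. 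The pressure is bounded in $L^2(I;L^2)$ through the inf-sup condition. For the time derivative we need $|b(\bu_n,c_n,v)|\lesssim\|\bu_n\|_{L^4}\|\nablaG c_n\|\|v\|_{L^4}$, bounding $\|\Pi_n^{H^1}\|$ uniformly. Here we use that $\Pi_n$ is $H^1$-stable, since it commutes with the spectral basis. This bounds $\pa_tc_n$ in $L^2(I;H^1(\G)')$.

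\textbf{Passage to the limit (main step).} By Aubin--Lions, $c_n\to c$ strongly in $C(\bar I;L^2)\cap L^2(I;L^q)$ for $q<6$, and weakly-$*$ in $L^\infty(I;H^1)$. We also have weak limits $\bu_n\rightharpoonup\bu$, $\mu_n\rightharpoonup\mu$, $p_n\rightharpoonup p$. The nonlinear terms need care.
\begin{itemize}
\item The potential term $\Pi_nf_0'(c_n)\to f_0'(c)$ follows from strong convergence, since $|f_0'(s)|\lesssim1+|s|^3$ and $c_n$ is bounded in $L^\infty(I;L^6)$.
\item The convective terms $b(\bu_n,c_n,\mu_n)$ and $b(\bw,c_n,\mu_n)$ are products of three merely weakly convergent factors in the gradient slot. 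Expected obstacle: $b(\bw,c_n,\mu_n)$ with fixed $\bw$ involves the weak–weak product $\nablaG c_n\cdot\zm{\mu_n}$.
\end{itemize}
We would first rewrite this term by the identity preceding \eqref{eq:b-form}. For smooth $\bw\in\bH^1_{t,\perp}$ it equals $-(\bw\cdot\nablaG\mu_n,c_n)-(\divG\bw,c_n\zm{\mu_n})-\mean{\mu_n}(\bw,\nablaG c_n)$, up to the given manipulations. Now only one factor in each product converges weakly and the others converge strongly, since $c_n\to c$ strongly in $L^2(I;L^4)$. We then pass to the limit on a time-integrated test function and extend by density of $\bC^1_t$ in $\bH^1_t$, after applying $\cP_E^\perp$, which preserves regularity by Proposition~\ref{prop:Killing}(3). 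Similarly, $b(\bu_n,c_n,v)=-(\bu_n\cdot\nablaG v,c_n)-\dots$ for $v\in V_m$ converges using $\bu_n\rightharpoonup\bu$ weakly and $c_n$ strongly, with the divergence-free property of $\bu_n$.

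\textbf{Conclusion.} The identity $c(0)=c_0$ follows from $C(\bar I;L^2)$ convergence. Mass conservation passes to the limit. The energy inequality follows from weak lower semicontinuity of the dissipation and of $\|\nablaG c\|^2$, together with $f_0(c_n(t))\to f_0(c(t))$ in $L^1$ for a.e.\ $t$.
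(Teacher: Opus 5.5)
Your proof is correct, but it takes a different route from the paper in how the Stokes block is handled.

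\textbf{The paper's route.} The paper Galerkin-discretizes the velocity too, using a basis of the solenoidal space $\bV=\{\bv\in\bH^1_{t,\perp}(\G):\divG\bv=0\}$. It tests the momentum equation only with $\bV_N$ and passes to the limit in the capillary term for divergence-free $\bw$ via $(\bw\cdot\nablaG c_N,\mu_N)=-(\bw\cdot\nablaG\mu_N,c_N)$. Only afterwards does it recover $p$, using the inf-sup condition and the surjectivity of $\divG:\bH^1_{t,\perp}(\G)\to L^2_\#(\G)$.

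\textbf{Your route.} You solve the full Stokes problem in $\bH^1_{t,\perp}(\G)\times L^2_\#(\G)$ at each instant. As a result $\divG\bu_n=0$ holds exactly, $p_n$ is carried through the scheme with a uniform $L^2(I;L^2(\G))$ bound, and $p$ is obtained directly as a weak limit, with no separate recovery step. The price is that your momentum test functions are not solenoidal, so the integration by parts produces the extra term $(\divG\bw,c_n\zm{\mu_n})$. This term is harmless, since $c_n\to c$ strongly in $L^2(I;L^4(\G))$ while $\zm{\mu_n}\rightharpoonup\zm{\mu}$ weakly in $L^2(I;L^4(\G))$. You also need the Stokes solution operator to depend bilinearly, hence locally Lipschitz, on $(c_n,\mu_n)$ to set up the ODE, which you correctly note. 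Both routes rest on the same cancellation built into the centered form $b$ for the energy identity, and on the same $H^1$-stability of $\Pi_n$ for the bound on $\pa_t c_n$.

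\textbf{Two small corrections.} First, the identity you need is
\[
b(\bw,c_n,\mu_n)=-(\bw\cdot\nablaG\mu_n,c_n)-(\divG\bw,c_n\zm{\mu_n}).
\]
Your additional term $-\mean{\mu_n}(\bw,\nablaG c_n)$ double counts the mean correction already contained in $\zm{\mu_n}$. It would be correct only with $c_n\mu_n$ in place of $c_n\zm{\mu_n}$ in the middle term. Each term passes to the limit either way, so nothing breaks.

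Second, the smoothness of $\bw$ and the density argument via $\bC^1_t$ and $\cP_E^\perp$ are unnecessary. The same H\"older estimates work for every fixed $\bw\in\bH^1_{t,\perp}(\G)$ multiplied by a bounded function of time, exactly as in the paper's estimates for the convective terms.
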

        \begin{proof}
            The existence proof is based on the Faedo--Galerkin method; see, e.g., \cite{Elliott1989, CaetanoElliott2021}. We proceed in several steps. \smallskip

            \noindent \textbf{Faedo--Galerkin scheme}. Let $\{\varphi_k\}_{k=1}^\infty$ be an orthonormal basis of $L^2(\G)$ consisting of eigenfunctions of $\operatorname{id} - \Delta_\G$ ordered so that $\varphi_1$ is constant, and let $\{\bvarphi_k\}_{k=1}^\infty$ be a basis of 
            $$\bV := \{\bv \in \bH^1_{t, \perp}(\G) : \divG \bv = 0\}.$$ 
            For a fixed integer $N \geq 1$, we let $V_N := \operatorname{span} \{\varphi_k\}_{k=1}^N, \bV_N := \operatorname{span} \{\bvarphi_k\}_{k=1}^N$, and seek approximate solutions of the form
        	\[
            	c_N(t)=\sum_{i=1}^N a_i(t)\varphi_i, \qquad \mu_N(t)=\sum_{i=1}^N b_i(t)\varphi_i, \qquad \bu_N(t)=\sum_{i=1}^N d_i(t) \bvarphi_i.
        	\]
        	Let $\Pi_N : L^2(\G) \to V_N$ be the $L^2(\G)$-orthogonal projection. Since $\{\varphi_k\}_{k=1}^\infty$ are Laplace--Beltrami eigenfunctions, $\Pi_N$ is $H^1$-orthogonal and hence stable: $\|\Pi_N v\|_{H^1(\G)} \le \|v\|_{H^1(\G)}$ for all $v\in H^1(\G)$.
        	
            Consider the following semidiscrete system: Given {$c_N(0)=\Pi_N c_0$}, find $\bu_N \in L^2(I; \bV_N), c_N \in H^1(I; V_N), \mu_N \in L^2(I; V_N)$ such that for a.e. $t \in I$ and all $(\bw_N, v_N, g_N) \in \bV_N \times V_N \times V_N$,
            \begin{subequations} \label{eq:Faedo-Galerkin}
                \begin{align}
                    2 \eta \, \ba(\bu_N, \bw_N) & = \sigma b(\bw_N, c_N, \mu_N), \label{eq:Faedo-Galerkin-mom} \\
                    (\pa_t c_N, v_N) + b(\bu_N, c_N, v_N) + M a(\mu_N, v_N) & = 0, \label{eq:Faedo-Galerkin-phase} \\
                    (\mu_N, g_N) & = \varepsilon^{-1} (f_0'(c_N), g_N) + \varepsilon a(c_N, g_N). \label{eq:Faedo-Galerkin-chem}
                \end{align}
            \end{subequations}   
            The momentum and chemical potential equations in~\eqref{eq:Faedo-Galerkin} determine $\bu_N$ and $\mu_N$ uniquely from $c_N$, using Korn's inequality~\eqref{eq:korn} for the momentum equation. Substitution into the phase equation yields an ODE system with a locally Lipschitz right-hand side, which gives local existence. The a priori bounds below extend the solution to $[0,T]$.\smallskip

            \noindent \textbf{A priori bounds}. The energy estimate \eqref{eq:en-law} applies verbatim to the Galerkin approximations after testing \eqref{eq:Faedo-Galerkin-mom} with $\bw_N = \bu_N \in \bV_N$, \eqref{eq:Faedo-Galerkin-phase} with $v_N = \sigma \mu_N \in V_N$, and \eqref{eq:Faedo-Galerkin-chem} with $g_N = \pa_t c_N \in V_N$. This establishes an a priori estimate for a.e. $t \in I$ (after extending the scheme to $[0,T]$)
            \begin{equation} \label{eq:en-law-Galerkin}
                \cE[c_N(t)] + \int_0^t \left[ 2 \eta \|D_\G \bu_N\|^2 + \sigma M \|\nablaG \mu_N\|^2 \right] \ds \leq \cE[\Pi_N c_0].
            \end{equation}
            Since $\cE[\Pi_N c_0] \lesssim \|\nablaG \Pi_N c_0\|^2 + \|f_0(\Pi_N c_0)\|_{L^1(\G)}$ and $f_0$ is at most quartic by \eqref{eq:double-well}, we infer by Sobolev embedding that $\cE[\Pi_N c_0] \lesssim \|\Pi_N c_0\|_{H^1(\G)}^2 + 1 + \|\Pi_N c_0\|_{L^4(\G)}^4 \lesssim 1 + \|\Pi_N c_0\|_{H^1(\G)}^4 \lesssim 1 + \|c_0\|_{H^1(\G)}^4$ since $\Pi_N$ is $H^1(\G)$-orthogonal {and hence} stable. Therefore, bound \eqref{eq:en-law-Galerkin} is uniform in $N$.
            
            Taking $v_N = 1 \in V_N$ in \eqref{eq:Faedo-Galerkin-phase} implies $(c_N(t), 1) = (c_0, 1)$, from which {Poincar\'e's inequality} combined with \eqref{eq:en-law-Galerkin} yields that $c_N$ is bounded in $L^\infty(I; H^1(\G))$. Similarly, \eqref{eq:en-law-Galerkin} combined with Korn's inequality \eqref{eq:korn} yields that $\bu_N$ is bounded in $L^2(I; \bH^1(\G))$. Taking $g_N = 1 \in V_N$ in \eqref{eq:Faedo-Galerkin-chem}, using $|f_0'(s)| \leq C(1 + |s|^3)$ for all $s \in \bbR$ (implied by \eqref{eq:double-well}) followed by Sobolev embedding gives $|(\mu_N, 1)| = \varepsilon^{-1} |(f_0'(c_N), 1)| \leq C \varepsilon^{-1} (1 + \|c_N\|_{H^1(\G)}^3)$, so the mean of $\mu_N$ is bounded uniformly in $N$, hence $\mu_N $ is bounded in $L^2(I; H^1(\G))$ due to \eqref{eq:en-law-Galerkin}. Next, from \eqref{eq:Faedo-Galerkin-phase}, for every $v_N \in V_N$,
        	\[
            	(\pa_t c_N, v_N) = - b(\bu_N, c_N, v_N) - (M\nablaG\mu_N,\nablaG v_N).
        	\]
        	Note that for an arbitrary $v\in H^1(\G)$, we have $(\pa_t c_N, v) = (\pa_t c_N, \Pi_N v)$. Using continuity of $b$ and $H^1(\G)$-stability of $\Pi_N$, we obtain
        	\[
                |(\pa_t c_N, v)| \le C (\|c_N\|_{H^1(\G)} \|\bu_N\|_{\bH^1(\G)} + \|\nablaG\mu_N\|_{\bL^2(\G)}) \|v\|_{H^1(\G)}.
        	\]
        	Hence $\pa_t c_N$ is bounded in $L^2(I; H^1(\G)')$ due to uniform bounds on $\|c_N\|_{L^\infty(I; H^1(\G))}$, $\|\bu_N\|_{L^2(I; \bH^1(\G))}$, and $\|\nablaG \mu_N\|_{L^2(I; \bL^2(\G))}$. \smallskip
            
            \noindent \textbf{Compactness argument}. By the Aubin--Lions--Simon compactness lemma \cite[p.~85,~Cor.~4]{Simon1987}, there exists a subsequence, still denoted by $\{c_N,\mu_N,\bu_N\}$, such that
        	\[
                c_N \to c \quad \text{strongly in } C^0([0,T]; L^q(\G)), \quad q \in [1, 6),
        	\]
        	and also almost everywhere on $\G \times I$, while the Banach--Alaoglu theorem yields
            \[
            \begin{split}
                c_N \rightharpoonup^\ast c \quad\text{ in } L^\infty(I; H^1(\G)), & \qquad \pa_t c_N \rightharpoonup \pa_t c \quad \text{ in } L^2(I; H^1(\G)'), \\
            	\mu_N \rightharpoonup \mu \quad\text{in } L^2(I; H^1(\G)), & \qquad \bu_N \rightharpoonup \bu \quad \text{ in } L^2(I; \bH^1_{t, \perp}(\G)).
            \end{split}
            \]
            \textbf{Passage to the limit}. Weak convergence suffices to pass to the limit in the linear terms of~\eqref{eq:Faedo-Galerkin}. Since $|f_0'(s)| \leq C(1 + |s|^3)$ for all $s \in \bbR$, the strong convergence of $c_N$ (pick $q = 4$) implies
        	\begin{equation} \label{eq:f0'-strong-conv}
        	    f_0'(c_N)\to f_0'(c) \quad\text{strongly in } C^0([0,T]; L^{4/3}(\G)),
        	\end{equation}
            yielding $(f_0'(c_N), g) \xrightarrow{N \to \infty} (f_0'(c), g)$ for all $g \in L^2(I; H^1(\G))$. {It remains to pass to the limit in the two convective terms.} 
            \begin{enumerate}
                \item \textit{Term $(\bw \cdot \nablaG c, \mu)$}. Let $\bw \in L^2(I; \bV)$. We have
                \begin{equation} \label{eq:exist-first-conv-term}
                    \begin{split}
                        \int_I |(\bw \cdot \nablaG c_N, \mu_N)| \dt & \leq \int_I \|\nablaG c_N\|_{\bL^2(\G)} \|\mu_N\|_{L^4(\G)} \|\bw\|_{\bL^4(\G)} \dt \\
                        & \leq C \|c_N\|_{L^\infty(I; H^1(\G))} \|\mu_N\|_{L^2(I; H^1(\G))} \|\bw\|_{L^2(I; \bH^1(\G))} \\
                        & \leq C \|\bw\|_{L^2(I; \bH^1(\G))},
                    \end{split}
                \end{equation}
                where we used $\divG \bw = 0$ to replace $\zm{\mu_N}$ with $\mu_N$ and a priori bounds. Therefore, $\mu_N \nablaG c_N$ is bounded uniformly in $L^2(I; \bV')$. Using $\divG \bw = 0$ again, we write
                \begin{equation} \label{eq:ident-conv-term}
                    \begin{split}
                        \int_I (\bw \cdot \nablaG c_N, \mu_N) - (\bw \cdot \nablaG c, \mu) \dt & = \int_I (\bw \cdot \nablaG [c_N - c], \mu_N) + (\bw \cdot \nablaG c, \mu_N - \mu) \dt \\
                        & = \int_I (\bw \cdot \nablaG \mu_N, c - c_N) + (\bw \cdot \nablaG c, \mu_N - \mu) \dt.
                    \end{split}
                \end{equation}
                The first term on the right-hand side in \eqref{eq:ident-conv-term} is controlled using the a priori bound on $\nablaG \mu_N$, and strong convergence for $c_N$ as follows:
                \[
                    \begin{split}
                        \int_I |(\bw \cdot \nablaG \mu_N, c - c_N)| \dt & \leq \int_I \|c_N - c\|_{L^4(\G)} \|\nablaG \mu_N\| \|\bw\|_{\bL^4(\G)} \dt \\
                        & \leq \|c_N - c\|_{C^0([0, T]; L^4(\G))} \|\nablaG \mu_N\|_{L^2(I; \bL^2(\G))} \|\bw\|_{L^2(I; \bL^4(\G))} \\
                        & \quad \xrightarrow{N \to \infty} 0.
                    \end{split}
                \]
                The second term on the right-hand side in \eqref{eq:ident-conv-term} is controlled using weak convergence since $\mu_N \rightharpoonup \mu \text{ in } L^2(I; L^4(\G))$ and $\|\bw \cdot \nablaG c\|_{L^2(I; L^{4/3}(\G))} \leq \|c\|_{L^\infty(I; H^1(\G))} \|\bw\|_{L^2(I; \bL^4(\G))}$, which is finite due to Sobolev embedding. Thus, for all $\bw \in L^2(I; \bV)$,
                \[
                    \int_I (\bw \cdot \nablaG c_N, \mu_N) \dt \xrightarrow{N \to \infty} \int_I (\bw \cdot \nablaG c, \mu) \dt.
                \]
                \item \textit{Term $(\bu \cdot \nablaG c, v)$}. First, note that $\bu_N \cdot \nablaG c_N \in L^2(I; H^1(\G)')$ with a uniform in $N$ bound by an estimate similar to \eqref{eq:exist-first-conv-term}. Let $v \in L^2(I; H^1(\G))$. Using $\divG \bu_N = 0$, we write
                \[
                    \begin{split}
                        \int_I (\bu_N \cdot \nablaG c_N, v) - (\bu \cdot \nablaG c, v) \dt & = \int_I (\bu_N \cdot \nablaG [c_N - c], v) + ([\bu_N - \bu] \cdot \nablaG c, v) \dt \\
                        & = \int_I (\bu_N \cdot \nablaG v, c - c_N) + ([\bu_N - \bu] \cdot \nablaG c, v) \dt.
                    \end{split}
                \]
                The second term on the right-hand side is handled similarly to the second term in \eqref{eq:ident-conv-term}. We now bound the first term above using {the} a priori bound for $\bu_N$ and strong convergence for $c_N$:
                \[
                    \begin{split}
                        \int_I |(\bu_N \cdot \nablaG v, c - c_N)| \dt & \leq \int_I \|\bu_N\|_{\bL^4(\G)} \|\nablaG v\| \|c - c_N\|_{L^4(\G)} \dt \\
                        & \leq C \|c - c_N\|_{C^0([0, T]; L^4(\G))} \|\bu_N\|_{L^2(I; \bH^1(\G))} \|v\|_{L^2(I; H^1(\G))} \\
                        & \quad \xrightarrow{N \to \infty} 0.
                    \end{split}
                \]
            \end{enumerate}
            Let $\wt N \geq 1$ be an integer and $N \geq \wt N$.
        	Passing to the limit first as $N \to \infty$, then as $\wt N \to \infty$ and using the density of $\cup_{\wt N \geq 1} V_{\wt N}$ in $H^1(\G)$ and of $\cup_{\wt N \geq 1} \bV_{\wt N}$ in $\bV$ in the Galerkin system yields a weak solution $(\bu, c, \mu)$ of \eqref{eq:weak} with the momentum equation \eqref{eq:weak-mom} restricted to the $\divG$-free subspace. Since $\bV$ is a closed subspace, hence weakly closed, $(\divG \bu, q) = 0$ for all $q \in L^2_\#(\G)$ holds in the limit. Since $c_N \to c$ in $C^0([0,T]; L^2(\G))$ and $\Pi_N c_0 \to c_0$ as $N \to \infty$, we verify that $c(0) = c_0$. \smallskip

            \noindent \textbf{Recovery of the pressure}. The limit functions $(\bu, c, \mu)$ satisfy for a.e. $t \in I$ and all $\bw \in \bV$,
            \begin{equation} \label{eq:momentum-on-kernels}
                2 \eta \, \ba(\bu, \bw) = \sigma b(\bw, c, \mu).
            \end{equation}
            Let $\bell \in \bH^1_{t, \perp}(\G)'$ be defined as
            \[
            \langle \bell, \bv \rangle_{\bH^1_{t, \perp}(\G)' \times \bH^1_{t, \perp}(\G)} := 2 \eta \, \ba(\bu, \bv) - \sigma b(\bv, c, \mu).
            \]
            Due to \eqref{eq:momentum-on-kernels}, we have $\langle \bell, \bw \rangle_{\bH^1_{t, \perp}(\G)' \times \bH^1_{t, \perp}(\G)} = 0$ for all $\bw \in \bV$. From the inf-sup condition in~\cite[Lemma 4.2]{JankuhnOlshanskiiReusken2018} and the fact that $\bv \mapsto \cP_E^\perp \bv$ leaves $\divG \bv$ {unchanged and is bounded} in $\bH^1(\G)$ by Proposition \ref{prop:Killing}, we infer that $\divG : \bH^1_{t, \perp}(\G) \to L^2_\#(\G)$ is surjective. We thus deduce that there exists a unique $p \in L^2_\#(\G)$ such that $\langle \bell, \bv \rangle_{\bH^1_{t, \perp}(\G)' \times \bH^1_{t, \perp}(\G)} = (p, \divG \bv)$ for all $\bv \in \bH^1_{t, \perp}(\G)$, and the following a priori estimate is valid:
            \[
                \begin{split}
                    \|p\|_{L^2(I; L^2(\G))} & \leq C (\|D_\G \bu\|_{L^2(I; \dL^2(\G))} + \|(\nablaG c) \zm{\mu}\|_{L^2(I; \bL^{4/3}(\G))}) \\
                    & \leq C (\|D_\G \bu\|_{L^2(I; \dL^2(\G))} + \|\nablaG c\|_{L^\infty(I; \bL^2(\G))} \|\mu\|_{L^2(I; H^1(\G))}),
                \end{split}
            \]
            where $C > 0$ depends on $\eta, \sigma$ and the inf-sup constant. \smallskip
            
            \noindent \textbf{Energy inequality and mass conservation}. 
            We pass to the limit in the Galerkin energy estimate using weak lower semicontinuity for the quadratic terms and Fatou's lemma for $f_0(c_N)\geq0$. On the right-hand side, $\Pi_N c_0\to c_0$ in $H^1(\G)$ and the quartic growth of $f_0$ imply $\cE[\Pi_N c_0]\to\cE[c_0]$. This is the standard Galerkin compactness argument; see also~\cite[Section~4.1]{CaetanoElliott2021} for the surface Cahn--Hilliard equation. It yields the energy inequality. Mass conservation follows by taking $v = 1 \in L^2(I; H^1(\G))$ in \eqref{eq:weak-phase}. This finishes the proof.
        \end{proof}
        We next derive a continuous dependence result in the norms consistent with the energy law \eqref{eq:en-law}. We start with the following lemma that establishes higher regularity of $c$ and $f_0'(c)$. This smoothness pickup follows from the chemical potential equation $\mu = \varepsilon^{-1} f_0'(c) - \varepsilon \DeltaG c$ and elliptic regularity for $\G$ of class $C^2$.

        \begin{lemma}[regularity of a weak solution] \label{lem:reg-weak}
            Let $\G$ be of class $C^2$ and $d \in \{2, 3\}$, and suppose that $(c, \mu) \in L^\infty(I; H^1(\G)) \times L^2(I; H^1(\G))$ satisfy the chemical potential equation \eqref{eq:weak-chem}. Then
            \[
                c \in L^2(I; W^{2,6}(\G)) \cap L^4(I; L^\infty(\G)), \qquad \DeltaG c \in L^2(I; H^1(\G)), \qquad f_0'(c) \in L^2(I; H^1(\G)).
            \]
        \end{lemma}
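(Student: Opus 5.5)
The argument is a bootstrap on the chemical potential equation \eqref{eq:weak-chem}, read for a.e.\ $t\in I$ as a weak Laplace--Beltrami problem
\[
\varepsilon\, a(c,g) = (\mu - \varepsilon^{-1} f_0'(c), g) \qquad \forall g\in H^1(\G),
\]
that is, $-\varepsilon\DeltaG c = \mu - \varepsilon^{-1} f_0'(c) =: F$. We alternate between improving the integrability of $F$ and applying elliptic regularity for $\DeltaG$ on a $C^2$ surface. The regularity result needed is the $W^{2,p}$ estimate
\[
\|v\|_{W^{2,p}(\G)} \lesssim \|\DeltaG v\|_{L^p(\G)} + \|v\|_{L^p(\G)}, \qquad p\in(1,\infty),
\]
for $\G$ of class $C^2$. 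I would take it from \cite{BenavidesNochettoShakipov2026-scalar}, in the same way as the vector version from \cite{BenavidesNochettoShakipov2025-vector} was used in Proposition~\ref{prop:Killing}.

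\textbf{Step 1: $c\in L^2(I;H^2(\G))$.} Assumption~\eqref{eq:double-well} gives $|f_0'(s)|\le C(1+|s|^3)$ and $|f_0''(s)|\le C(1+|s|^2)$. For $d\le 3$ we have $H^1(\G)\hookrightarrow L^6(\G)$, so
\[
\|f_0'(c)\|\lesssim 1+\|c\|_{H^1(\G)}^3 \in L^\infty(I).
\]
Hence $F\in L^2(I;L^2(\G))$. Elliptic regularity with $p=2$ then yields $c\in L^2(I;H^2(\G))$.

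\textbf{Step 2: $c\in L^4(I;L^\infty(\G))$ and $f_0'(c)\in L^2(I;H^1(\G))$.} I use Agmon's inequality on $\G$, $\|v\|_{L^\infty}\lesssim \|v\|_{H^1}^{1/2}\|v\|_{H^2}^{1/2}$. It is valid for $d\le3$ and is obtained via local charts and Gagliardo--Nirenberg, which needs only $C^2$ regularity. It gives
\[
\int_I\|c\|_{L^\infty}^4\dt \lesssim \|c\|_{L^\infty(I;H^1)}^2\,\|c\|_{L^2(I;H^2)}^2.
\]
Next, the chain rule (valid since $f_0'\in C^{1,1}_{loc}$ and $c\in H^2\cap L^\infty$) gives $\nablaG f_0'(c)=f_0''(c)\nablaG c$. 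Therefore
\[
\|\nablaG f_0'(c)\|\lesssim (1+\|c\|_{L^\infty}^2)\,\|\nablaG c\|,
\]
which lies in $L^2(I)$ because $\|c\|_{L^\infty}^2\in L^2(I)$ and $\|\nablaG c\|\in L^\infty(I)$. Together with Step 1 this gives $f_0'(c)\in L^2(I;H^1(\G))$.

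\textbf{Step 3: the remaining two claims.} From Step 2, $\DeltaG c=\varepsilon^{-1}(\varepsilon^{-1}f_0'(c)-\mu)\in L^2(I;H^1(\G))$. The Sobolev embedding $H^1\hookrightarrow L^6$ then places $\DeltaG c$ in $L^2(I;L^6(\G))$. Elliptic regularity with $p=6$ yields $c\in L^2(I;W^{2,6}(\G))$.

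\textbf{Main obstacle.} The delicate point is the elliptic $W^{2,p}$ regularity (both $p=2$ and $p=6$) for the Laplace--Beltrami operator under only $C^2$ smoothness of $\G$, where the metric coefficients in charts are merely $C^1$. The first thing I would try is to quote the cited scalar regularity theory. Failing that, I would localize to charts and apply Calder\'on--Zygmund estimates for divergence-form operators with $C^1$ (hence continuous) leading coefficients. Measurability in time of these pointwise-in-time estimates is routine.
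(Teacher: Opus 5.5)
Your proposal is correct and follows the paper's proof essentially step for step: elliptic $H^2$ regularity from the cubic growth of $f_0'$ and $H^1(\G)\hookrightarrow L^6(\G)$, Agmon's inequality for $L^4(I;L^\infty(\G))$, the chain rule for $\nablaG f_0'(c)$, and then reading off $\DeltaG c\in L^2(I;H^1(\G))$ from the equation before applying $W^{2,6}$ elliptic regularity. The only difference is how you bound $\|f_0''(c)\nablaG c\|$: you use $(1+\|c\|_{L^\infty(\G)}^2)\|\nablaG c\|$ together with the Agmon bound, whereas the paper uses H\"older in the form $\|f_0''(c)\|_{L^3(\G)}\|\nablaG c\|_{L^6(\G)}\lesssim(1+\|c\|_{H^1(\G)}^2)\|c\|_{H^2(\G)}$, which does not need Agmon for this step; both bounds lie in $L^2(I)$.
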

        \begin{proof}
            We proceed in several steps. \smallskip
            
            \noindent \textbf{Step 1: $c \in L^2(I; H^2(\G))$}. The cubic growth of $f_0'$ and the embedding $H^1(\G)\hookrightarrow L^6(\G)$ imply $f_0'(c)\in L^\infty(I;L^2(\G))$. Thus the right-hand side of $-\varepsilon\DeltaG c=\mu-\varepsilon^{-1}f_0'(c)$ belongs to $L^2(I;L^2(\G))$. Elliptic regularity on $C^2$ surfaces~\cite[Theorem~1.2]{BenavidesNochettoShakipov2026-scalar} gives the assertion.
            \smallskip

            \noindent \textbf{Step 2: $c \in L^4(I; L^\infty(\G))$}. We need  \textit{Agmon's inequality}: for all $v \in H^2(\G)$,
            \begin{equation} \label{eq:agmon}
                \|v\|_{L^\infty(\G)}^2 \leq C(\G) \|v\|_{H^1(\G)} \|v\|_{H^2(\G)};
            \end{equation}
            which can be verified for $\G$ of class $C^2$ by localizing and rewriting the quantities appearing in the inequality in flat domains, then using the classical Agmon's inequality in $\bbR^d$. Then,
            \[
                \|c\|_{L^\infty(\G)}^4 \leq C(\G)^2 \|c\|_{H^1(\G)}^2 \|c\|_{H^2(\G)}^2.
            \]
            Integrating in time and using $c \in L^\infty(I; H^1(\G)) \cap L^2(I; H^2(\G))$ proves the result. \smallskip

            \noindent \textbf{Step 3: $f_0'(c) \in L^2(I; H^1(\G))$}. Next we compute $\nablaG f_0'(c) = f_0''(c) \nablaG c$ and estimate using \eqref{eq:double-well} to deduce from H\"older and Sobolev embeddings that
            \[
                \begin{split}
                    \|\nablaG f_0'(c)\| & = \|f_0''(c) \nablaG c\| \leq \|f_0''(c)\|_{L^3(\G)} \|\nablaG c\|_{L^6(\G)} \leq C (1 + \|c\|_{L^6(\G)}^2) \|c\|_{H^2(\G)} \\
                    & \leq C (1 + \|c\|_{H^1(\G)}^2) \|c\|_{H^2(\G)} \leq C (1 + \|c\|_{L^\infty(I; H^1(\G))}^2) \|c\|_{H^2(\G)}.
                \end{split}
            \]
            Integrating in time and using $c \in L^\infty(I; H^1(\G)) \cap L^2(I; H^2(\G))$ proves that $f_0'(c) \in L^2(I; H^1(\G))$. \smallskip

            \noindent \textbf{Step 4: $\DeltaG c \in L^2(I; H^1(\G))$ and $c \in L^2(I; W^{2,6}(\G))$}. Since $\mu, f_0'(c) \in L^2(I; H^1(\G))$, the chemical potential equation \eqref{eq:weak-chem} readily implies $\DeltaG c \in L^2(I; H^1(\G))$. Furthermore, Sobolev embedding ensures that $\mu, f_0'(c) \in L^2(I; L^6(\G))$ and then elliptic regularity once again upgrades $c$ to $L^2(I; W^{2,6}(\G))$. This finishes the proof.
        \end{proof}
        We next need the following technical lemma that controls the perturbation error of the nonlinear term $f_0'(c_1) - f_0'(c_2)$.
        \begin{lemma}[control of the nonlinearity] \label{lem:control-of-nonlinearity}
            Let $\G$ be of class $C^2$ and $d \in \{2,3\}$. Let $\xi_1, \xi_2 \in L^\infty(I; H^1(\G)) \cap L^4(I; L^\infty(\G)) \cap L^2(I; W^{1,6}(\G))$, $\oxi := \xi_1 - \xi_2$, and $\ophi := f_0'(\xi_1) - f_0'(\xi_2)$. Then the following estimates are valid for a.e. $t \in I$:
            \[
                \begin{split}
                     {\|\ophi\|^2} & \leq C_{\phi, 0} {\|\oxi\|_{H^1(\G)}^2}, \quad \int_0^t \|\ophi\|_{H^1(\G)}^2 \ds \leq \int_0^t C_{\phi, 1}(s) \|\oxi\|_{H^1(\G)}^2 \ds,
                \end{split}
            \]
            where $C_{\phi, 0} > 0$ depends on $\G, f_0$, and the $L^\infty(I; H^1(\G))$-norms of $\xi_i$, while 
            \begin{equation} \label{eq:Cphi1}
                C_{\phi, 1}(t) = C' \left[1 + \|\xi_2(t)\|_{L^\infty(\G)}^4 + (1 + \|\xi_1\|_{L^\infty(I; H^1(\G))}^2 + \|\xi_2\|_{L^\infty(I; H^1(\G))}^2) \|\xi_1(t)\|_{W^{1,6}(\G)}^2 \right]
            \end{equation}
            belongs to $L^1(I)$ for some $C' > 0$ depending only on $\G, f_0$.
        \end{lemma}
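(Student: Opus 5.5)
The plan is to write the differences through the mean value theorem and then estimate with H\"older and Sobolev inequalities, relying on the growth bounds $|f_0''(s)|\le C(1+s^2)$ and $|f_0'''(s)|\le L(1+|s|)$ coming from \eqref{eq:double-well}. For the $L^2$ bound we write $\ophi=\oxi\int_0^1 f_0''(\xi_2+\theta\oxi)\,\d\theta$. This gives $|\ophi|\le C(1+\xi_1^2+\xi_2^2)|\oxi|$ pointwise. H\"older with exponents $(3,6)$, together with $H^1(\G)\hookrightarrow L^6(\G)$ (valid for $d\le 3$), then yields
\[
\|\ophi\|\le C\big(1+\|\xi_1\|_{L^6(\G)}^2+\|\xi_2\|_{L^6(\G)}^2\big)\|\oxi\|_{L^6(\G)}\le C\big(1+\|\xi_1\|_{L^\infty(I;H^1(\G))}^2+\|\xi_2\|_{L^\infty(I;H^1(\G))}^2\big)\|\oxi\|_{H^1(\G)} .
\]
This gives $C_{\phi,0}$ and also covers the $L^2$ part of the $H^1$ norm in the second estimate, since the constant is bounded in time.

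For the gradient we split
\[
\nablaG\ophi=f_0''(\xi_2)\nablaG\oxi+\big(f_0''(\xi_1)-f_0''(\xi_2)\big)\nablaG\xi_1 .
\]
We choose this particular split so that the remaining factor $\nablaG\xi_1$ sits on the function carrying $W^{1,6}$ regularity, which matches the asymmetric form of \eqref{eq:Cphi1}. The first term is bounded by $\|f_0''(\xi_2)\|_{L^\infty(\G)}\|\nablaG\oxi\|\le C(1+\|\xi_2\|_{L^\infty(\G)}^2)\|\oxi\|_{H^1(\G)}$, and squaring produces the term $1+\|\xi_2(t)\|_{L^\infty(\G)}^4$. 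For the second term we use the mean value theorem with $f_0'''$ (valid since $f_0''$ is locally Lipschitz), giving $|f_0''(\xi_1)-f_0''(\xi_2)|\le C(1+|\xi_1|+|\xi_2|)|\oxi|$. We then apply H\"older with exponents $(6,6,6)$ to the product $(1+|\xi_1|+|\xi_2|)\,|\oxi|\,|\nablaG\xi_1|$:
\[
\big\|(1+|\xi_1|+|\xi_2|)\,\oxi\,\nablaG\xi_1\big\|\le C\big(1+\|\xi_1\|_{H^1(\G)}+\|\xi_2\|_{H^1(\G)}\big)\|\oxi\|_{H^1(\G)}\|\xi_1\|_{W^{1,6}(\G)} .
\]
Squaring gives the second part of $C_{\phi,1}$. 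Integrating the pointwise-in-time bound over $(0,t)$ yields the integral estimate.

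Finally, $C_{\phi,1}\in L^1(I)$ because $\xi_2\in L^4(I;L^\infty(\G))$ and $\xi_1\in L^2(I;W^{1,6}(\G))$, while the $H^1$ norms are essentially bounded in time.

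The main obstacle is choosing the split and the H\"older exponents so that only quantities available from Lemma~\ref{lem:reg-weak} appear. In particular, no $L^\infty$ bound on $\nablaG\oxi$ and no $W^{1,6}$ norm of $\oxi$ may enter, and each factor must be integrable in time at the right power. The choice above achieves this.
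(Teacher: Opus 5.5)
Your proposal is correct and follows the paper's proof essentially step for step. It uses the same $L^6$-based bound for $\ophi$, the same splitting $\nablaG\ophi = [f_0''(\xi_1)-f_0''(\xi_2)]\nablaG\xi_1 + f_0''(\xi_2)\nablaG\oxi$ with $f_0''(\xi_2)$ in $L^\infty$ and $\nablaG\xi_1$ in $L^6$, and the same H\"older and Sobolev exponents; your $(6,6,6)$ split is the paper's $L^3\times L^6$ bound with the $L^3$ factor split further. One point to tighten: absorb the $L^2$ part of $\|\ophi\|_{H^1(\G)}^2$ into the explicit form of $C_{\phi,1}$ via $\|\xi_1\|_{L^6(\G)}^4\lesssim\|\xi_1\|_{H^1(\G)}^2\|\xi_1\|_{W^{1,6}(\G)}^2$ and $\|\xi_2\|_{L^6(\G)}^4\lesssim\|\xi_2\|_{L^\infty(\G)}^4$, rather than appealing to boundedness in time, so that $C'$ depends only on $\G$ and $f_0$.
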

        \begin{proof}
            Using \eqref{eq:double-well}, we prove the first estimate: 
            \[
            \begin{split}
                \|\ophi\|^2 = \|f_0'(\xi_1) - f_0'(\xi_2)\|^2 \leq C (1 + \|\xi_1\|_{L^6(\G)}^4 + \|\xi_2\|_{L^6(\G)}^4) \|\oxi\|_{L^6(\G)}^2 \leq C_\xi \|\oxi\|_{H^1(\G)}^2,
            \end{split}
            \]
            where $C_\xi := C (1 + \|\xi_1\|_{L^6(\G)}^4 + \|\xi_2\|_{L^6(\G)}^4) \in L^\infty(I)$ since each $\xi_1, \xi_2 \in L^\infty(I; H^1(\G))$.  We now turn to $\nablaG \ophi$:
            \begin{equation} \label{eq:ophi-h1-bound}
                \begin{split}
                    \|\nablaG \ophi\| & = \|f_0''(\xi_1) \nablaG \xi_1 - f_0''(\xi_2) \nablaG \xi_2\| \\
                    & \leq \|[f_0''(\xi_1) - f_0''(\xi_2)] \nablaG \xi_1\| + \|f_0''(\xi_2) \nablaG \oxi\| \\
                    & \leq \|f_0''(\xi_1) - f_0''(\xi_2)\|_{L^3(\G)} \|\xi_1\|_{W^{1,6}(\G)} + \|f_0''(\xi_2)\|_{L^\infty(\G)} \|\oxi\|_{H^1(\G)}.
                \end{split}
            \end{equation}
            Next, we use \eqref{eq:double-well}, H\"older and Sobolev embeddings as follows:
            \[
                \begin{split}
                    \|\nablaG \ophi\| & \lesssim (1 + \|\xi_1\|_{L^6(\G)} + \|\xi_2\|_{L^6(\G)}) \|\oxi\|_{L^6(\G)} \|\xi_1\|_{W^{1,6}(\G)} + (1 + \|\xi_2\|_{L^\infty(\G)}^2) \|\oxi\|_{H^1(\G)} \\
                    & \lesssim (1 + \|\xi_1\|_{H^1(\G)} + \|\xi_2\|_{H^1(\G)}) \|\oxi\|_{H^1(\G)} \|\xi_1\|_{W^{1,6}(\G)} + (1 + \|\xi_2\|_{L^\infty(\G)}^2) \|\oxi\|_{H^1(\G)},
                \end{split}
            \]
            where the hidden constant depends only on $\G$ and $f_0$. Squaring and then integrating from $0$ to $t$ and using the asserted regularity of $\xi_1, \xi_2$ finishes the proof.
        \end{proof}
        \begin{theorem}[continuous dependence in energy norms] \label{thm:cont-dep}
            Let $\G$ be of class $C^2$, $d \in \{2,3\}$. Let $(\bu_i, p_i, c_i, \mu_i)$ be two weak solutions of \eqref{eq:weak} corresponding to initial data $c_{0,i}, i \in \{1,2\}$. Then 
            \[
                \begin{split}
                    \|\bu_1 - \bu_2\|_{L^2(I; \bH^1(\G))} & + \|p_1 - p_2\|_{L^2(I; L^2(\G))} + \|c_1 - c_2\|_{L^\infty(I; H^1(\G))} \\
                    & + \|\pa_t (c_1 - c_2)\|_{L^2(I; H^1(\G)')} + \|\mu_1 - \mu_2\|_{L^2(I; H^1(\G))} \leq C_T \|c_{0,1} - c_{0,2}\|_{H^1(\G)},
                \end{split}
            \]
            where $C_T > 0$ depends on $f_0, T, \G$, material parameters, and the two solutions.
        \end{theorem}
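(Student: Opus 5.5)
We test the difference of the two weak formulations with the difference of the solutions, mimicking the energy law \eqref{eq:en-law}, and then close with Gronwall's inequality. Lemma~\ref{lem:reg-weak} supplies the extra regularity of $c_i$, and Lemma~\ref{lem:control-of-nonlinearity} controls the potential term.

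\textbf{Setup.} Set $\obu:=\bu_1-\bu_2$, $\op:=p_1-p_2$, $\oc:=c_1-c_2$, $\omu:=\mu_1-\mu_2$ and $\ophi:=f_0'(c_1)-f_0'(c_2)$. Using bilinearity of $b$ in its last two arguments, the differences satisfy
\[
2\eta\,\ba(\obu,\bw)-(\op,\divG\bw)=\sigma\big[b(\bw,\oc,\mu_1)+b(\bw,c_2,\omu)\big],
\]
\[
\langle\pa_t\oc,v\rangle+b(\obu,c_1,v)+b(\bu_2,\oc,v)+M\,a(\omu,v)=0,
\qquad
(\omu,g)=\varepsilon^{-1}(\ophi,g)+\varepsilon\,a(\oc,g).
\]

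\textbf{Energy estimate.} Take $\bw=\obu$ and $v=\sigma\omu$ and add the two equations.
\begin{itemize}
\item The pressure term drops because $\divG\obu=0$.
\item The term $\sigma b(\obu,c_2,\omu)$ from the momentum equation and $\sigma b(\obu,c_1,\omu)$ from the phase equation combine to $-\sigma b(\obu,\oc,\omu)$.
\item This merges with $\sigma b(\obu,\oc,\mu_1)$ into $\sigma b(\obu,\oc,\mu_2)$, since $\mu_1-\omu=\mu_2$. This removes the term that would otherwise be cubic in the unknown differences.
\end{itemize}
The result is
\[
2\eta\|D_\G\obu\|^2+\sigma\langle\pa_t\oc,\omu\rangle+\sigma M\|\nablaG\omu\|^2=\sigma\big[b(\obu,\oc,\mu_2)-b(\bu_2,\oc,\omu)\big].
\]
By the chemical potential equation,
\[
\langle\pa_t\oc,\omu\rangle=\tfrac{\varepsilon}{2}\ddt\|\nablaG\oc\|^2+\varepsilon^{-1}\langle\pa_t\oc,\ophi\rangle .
\]
This chain rule is the step needing justification. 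I would use a Lions--Magenes argument, or mollification in time: Lemma~\ref{lem:reg-weak} gives $\DeltaG\oc\in L^2(I;H^1(\G))$, so $-\DeltaG\oc$ pairs with $\pa_t\oc\in L^2(I;H^1(\G)')$, and $t\mapsto\|\nablaG\oc(t)\|^2$ is absolutely continuous. The term $\ophi\in L^2(I;H^1(\G))$ pairs in the same way.

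\textbf{Bounding the right-hand side.} Hölder and Sobolev ($\bH^1\hookrightarrow\bL^4$), Korn \eqref{eq:korn} and Poincaré for $\zm{\omu}$ give
\[
|b(\obu,\oc,\mu_2)|\lesssim\|D_\G\obu\|\,\|\nablaG\oc\|\,\|\mu_2\|_{H^1},
\qquad
|b(\bu_2,\oc,\omu)|\lesssim\|\bu_2\|_{H^1}\|\nablaG\oc\|\,\|\nablaG\omu\|.
\]
Both are linear in the dissipated quantities. After Young's inequality they leave $\big(\|\mu_2\|_{H^1}^2+\|\bu_2\|_{H^1}^2\big)\|\oc\|_{H^1}^2$, whose coefficient lies in $L^1(I)$.

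For the potential term, the difference phase equation gives
\[
\|\pa_t\oc\|_{H^1(\G)'}\lesssim\|\obu\|_{H^1}\|c_1\|_{H^1}+\|\bu_2\|_{H^1}\|\oc\|_{H^1}+\|\nablaG\omu\|.
\]
Hence $\varepsilon^{-1}|\langle\pa_t\oc,\ophi\rangle|\le\delta\big(\|D_\G\obu\|^2+\|\nablaG\omu\|^2\big)+C_\delta\big(1+\|\bu_2\|_{H^1}^2\big)\|\oc\|_{H^1}^2+C_\delta\|\ophi\|_{H^1}^2$. The $\delta$-terms are absorbed on the left. The last term is controlled in integrated form by $\int_0^tC_{\phi,1}(s)\|\oc\|_{H^1}^2\ds$ from Lemma~\ref{lem:control-of-nonlinearity}, with $\xi_i=c_i$; the hypotheses hold by Lemma~\ref{lem:reg-weak}.

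\textbf{Full $H^1$ norm and Gronwall.} The mean of $\oc$ is constant in time by mass conservation and equals $\mean{c_{0,1}-c_{0,2}}$. Poincaré \eqref{eq:poincare} applied to $\zm{\oc}$ then gives $\|\oc\|_{H^1}^2\lesssim\|\nablaG\oc\|^2+\|c_{0,1}-c_{0,2}\|^2$. Integrating in time and applying Gronwall with an $L^1(I)$ coefficient yields the bound on $\|\oc\|_{L^\infty(I;H^1)}$ and on $\int_I\big(\|D_\G\obu\|^2+\|\nablaG\omu\|^2\big)$.

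\textbf{Remaining quantities.}
\begin{itemize}
\item Korn's inequality upgrades the velocity bound to $\|\obu\|_{L^2(I;\bH^1)}$.
\item Testing the chemical equation with $g=1$ bounds the mean of $\omu$ by $\|\ophi\|\lesssim\|\oc\|_{H^1}$, using Lemma~\ref{lem:control-of-nonlinearity}. This gives the full $L^2(I;H^1)$ bound on $\omu$.
\item The bound for $\pa_t\oc$ follows from the difference phase equation as above.
\item The pressure difference follows from the inf-sup argument in the proof of Theorem~\ref{thm:existence}, applied to the difference momentum equation. Its right-hand side is bounded by $\|D_\G\obu\|+\|\nablaG\oc\|\,\|\mu_1\|_{H^1}+\|c_2\|_{H^1}\|\omu\|_{H^1}$.
\end{itemize}

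\textbf{Main obstacle.} I expect the main difficulty to be the nonlinear potential term $\langle\pa_t\oc,\ophi\rangle$. It forces $\ophi$ to be estimated in $H^1$ rather than $L^2$, which is why Lemmas~\ref{lem:reg-weak} and~\ref{lem:control-of-nonlinearity} are needed. It also requires the rigorous justification of the chain rule at the regularity available for weak solutions.
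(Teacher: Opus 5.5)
Your proposal is correct and follows essentially the same route as the paper: the same test with $(\obu,\sigma\omu)$ yielding the residual $\sigma[b(\obu,\oc,\mu_2)-b(\bu_2,\oc,\omu)]$, the same chain-rule identity from the chemical potential equation, and the same use of the phase error equation for $\langle\pa_t\oc,\ophi\rangle$, followed by Lemma~\ref{lem:control-of-nonlinearity}, Gr\"onwall, and recovery of the means, pressure and time derivative. The only differences are cosmetic: you bound $\|\pa_t\oc\|_{H^1(\G)'}$ and pair it with $\|\ophi\|_{H^1(\G)}$ where the paper tests directly with $\ophi$ (equivalent), and your use of the centered form to control $b(\bu_2,\oc,\omu)$ by $\|\nablaG\omu\|$ alone slightly simplifies the paper's upgrade to full $H^1$ norms.
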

        \begin{proof}
           Define the differences:
            \[
                \obu := \bu_1 - \bu_2, \quad \op := p_1 - p_2, \quad \oc := c_1 - c_2, \quad \omu := \mu_1 - \mu_2, \quad \ophi := f_0'(c_1) - f_0'(c_2).
            \]
            Testing \eqref{eq:weak-phase} with $v = 1 \in H^1(\G)$ for $c_1, c_2$ yields $(\oc(t), 1) = (c_1(t), 1) - (c_2(t), 1) = (c_{0,1}, 1) - (c_{0,2}, 1)$, whence $\ddt(\oc(t), 1) = 0$. We define
            \[
                \wh{c}_0 := |\G|^{-1} (c_{0,1} - c_{0,2}, 1), \qquad \oz := \oc - \wh{c}_0,
            \]
            and note that $(\oz(t), 1) = 0$ for all $t \in [0, T]$.
            Subtracting the two weak formulations \eqref{eq:weak} gives for a.e. $t \in I$,
            \begin{subequations} \label{eq:cont-err-eq}
                \begin{align}
                	2 \eta \, \ba(\obu, \bw) - (\op, \divG \bw) & = \sigma b(\bw, c_1, \mu_1) - \sigma b(\bw, c_2, \mu_2), \label{eq:cont-err-eq-mom} \\
                	(q, \divG \obu) & = 0, \label{eq:cont-err-eq-div} \\
                	\langle \pa_t \oc, v \rangle + M a(\omu, v) & = b(\bu_2, c_2, v) - b(\bu_1, c_1, v), \label{eq:cont-err-eq-phase} \\
                	(\omu, g) & = \varepsilon^{-1} (\ophi, g) + \varepsilon a(\oc, g). \label{eq:cont-err-eq-chem}
                \end{align}
            \end{subequations}
            We test the error equations \eqref{eq:cont-err-eq} with $(\bw, q, v) = (\obu, \op, \sigma \omu)$ to deduce:
            \begin{equation} \label{eq:cont-err-eq-tested-strong}
                \begin{split}
                	2 \eta \|D_\G \obu\|^2 & = \sigma b(\obu, c_1, \omu) + \sigma b(\obu, \oc, \mu_2), \\
                	\sigma \langle \pa_t \oc, \omu \rangle + \sigma M \|\nablaG \omu\|^2 & = - \sigma b(\obu, c_1, \omu) - \sigma b(\bu_2, \oc, \omu).
                \end{split}
            \end{equation}
            Next, we note that since $\mu_i, f_0'(c_i), \DeltaG c_i \in L^2(I; H^1(\G))$, the density of $H^1(\G)$ in $H^1(\G)'$ implies that the chemical potential error equation is valid in the following form for a.e. $t \in I$,
            \begin{equation} \label{eq:chem-pot-strong}
                \langle g, \omu \rangle = \varepsilon^{-1} \langle g, \ophi \rangle - \varepsilon \langle g, \DeltaG \oc \rangle, \qquad \forall g \in H^1(\G)'.
            \end{equation}
            {Testing~\eqref{eq:chem-pot-strong} with $g=\sigma\pa_t\oc$ and using the chain rule $\langle\pa_t\oc,-\DeltaG\oc\rangle=\tfrac12\ddt\|\nablaG\oc\|^2$, justified in the proof of Theorem~\ref{thm:well-posedness}, yields}
            \begin{equation} \label{eq:cont-chem-pot-eq-tested-strong}
                -\sigma \langle \pa_t \oc, \omu \rangle = - \sigma \varepsilon^{-1} \langle \pa_t \oc, \ophi \rangle - \frac{\sigma \varepsilon}{2} \ddt \|\nablaG \oc\|^2. 
            \end{equation}
            Adding the three equations in \eqref{eq:cont-err-eq-tested-strong} and \eqref{eq:cont-chem-pot-eq-tested-strong} noting the cancellation of $b(\obu, c_1, \omu)$ yields
            \begin{equation} \label{eq:err-eq-strong-prelim}
                \frac{\sigma \varepsilon}{2} \ddt \|\nablaG \oc\|^2 + 2 \eta \|D_\G \obu\|^2 + \sigma M \|\nablaG \omu\|^2 = \sigma b(\obu, \oc, \mu_2) - \sigma b(\bu_2, \oc, \omu) - \sigma \varepsilon^{-1} \langle \pa_t \oc, \ophi \rangle.
            \end{equation}
            {To handle the last term in \eqref{eq:err-eq-strong-prelim}, the \emph{key trick} is to test the phase field error equation \eqref{eq:cont-err-eq-phase} with $\ophi = f_0'(c_1) - f_0'(c_2)$, which is valid {by Lemma~\ref{lem:reg-weak}}}:
            \begin{equation} \label{eq:chem-pot-ophi}
                \langle \pa_t \oc, \ophi \rangle = - b(\obu, c_1, \ophi) - b(\bu_2, \oc, \ophi) - M a(\omu, \ophi).
            \end{equation}
            Combining \eqref{eq:err-eq-strong-prelim} with \eqref{eq:chem-pot-ophi} yields
            \begin{equation} \label{eq:err-eq-strong}
                \begin{split}
                    \frac{\sigma \varepsilon}{2} \ddt \|\nablaG \oc\|^2 & + 2 \eta \|D_\G \obu\|^2 + \sigma M \|\nablaG \omu\|^2 \\
                    & = \sigma b(\obu, \oc, \mu_2) - \sigma b(\bu_2, \oc, \omu) + \sigma \varepsilon^{-1} [b(\obu, c_1, \ophi) + b(\bu_2, \oc, \ophi) + M a(\omu, \ophi)] \\
                    & =: \rJ_1 + \rJ_2 + \rJ_3 + \rJ_4 + \rJ_5.
                \end{split}
            \end{equation}
            Integrating \eqref{eq:err-eq-strong} from $0$ to $t$ yields
            \begin{equation} \label{eq:err-eq-strong-integrated}
                \begin{split}
                    \frac{\sigma \varepsilon}{2} \|\nablaG \oc(t)\|^2 & + 2 \eta \|D_\G \obu\|_{L^2([0,t]; \dL^2(\G))}^2 + \sigma M \|\nablaG \omu\|_{L^2([0,t]; \bL^2(\G))}^2 \\
                    & = \frac{\sigma \varepsilon}{2} \|\nablaG \oc_0\|^2 + \int_0^t \rJ_1 + \rJ_2 + \rJ_3 + \rJ_4 + \rJ_5 \ds.
                \end{split}
            \end{equation}
            Recall that $\wh{c}_0 := |\G|^{-1} (c_{0,1} - c_{0,2},1)$ and note that $|\G|^{-1} (\oc, 1) = |\G|^{-1} (\oc_0, 1) = \wh{c}_0$ whence $|\mean{\oc}| \leq |\G|^{-1/2} \|\oc_0\|$. Similarly, from the chemical potential equation~\eqref{eq:weak-chem}, we infer $|\mean{\omu}| = \varepsilon^{-1} |\mean{\ophi}| \leq \varepsilon^{-1} |\G|^{-1/2} \|\ophi\| \leq C \|\oc\|_{H^1(\G)}$ from Lemma \ref{lem:control-of-nonlinearity} (control of the nonlinearity). Poincar\'e inequality \eqref{eq:poincare} together with $\nablaG v = \nablaG (v - \mean{v})$ for $v \in H^1(\G)$ implies that
            \begin{equation} \label{eq:equiv-norm-H1G}
                \|v\|_{H^1(\G)}^2 = \|v - \mean{v}\|^2 + \|\mean{v}\|^2 + \|\nablaG v\|^2 \leq |\G| \mean{v}^2 + (1 + C_P^2) \|\nablaG v\|^2.
            \end{equation}
            Therefore, the seminorms on $\oc, \omu$ can be upgraded to the full $H^1(\G)$-norms provided we balance it on the right-hand side:
            \begin{equation} \label{eq:err-eq-strong-h1-norms}
                \begin{split}
                    \frac{\sigma \varepsilon}{2} \|\oc(t)\|_{H^1(\G)}^2 & + 2 \eta \|D_\G \obu\|_{L^2([0,t]; \dL^2(\G))}^2 + \sigma M \|\omu\|_{L^2([0,t]; H^1(\G))}^2 \\
                    & \leq C_1 \|\oc_0\|_{H^1(\G)}^2 + C_2 \int_0^t \|\oc\|_{H^1(\G)}^2 \ds + \int_0^t |\rJ_1 + \rJ_2 + \rJ_3 + \rJ_4 + \rJ_5| \ds,
                \end{split}
            \end{equation}
            where $C_1 > 0$ depends on $\G$ and material parameters, while $C_2 > 0$ depends on $\G$, material parameters and $c_1, c_2$. We next recall that the trilinear form $b$ is continuous and estimate each of $\rJ_i$:
            \[
            \begin{split}
                |\rJ_1| & \leq C \|\obu\|_{\bH^1(\G)} \|\oc\|_{H^1(\G)} \|\mu_2\|_{H^1(\G)} \leq \delta_1 \|\obu\|_{\bH^1(\G)}^2 + C_{\delta,1} \|\oc\|_{H^1(\G)}^2 \|\mu_2\|_{H^1(\G)}^2, \\
                |\rJ_2| & \leq C \|\bu_2\|_{\bH^1(\G)} \|\oc\|_{H^1(\G)} \|\omu\|_{H^1(\G)} \leq \delta_2 \|\omu\|_{H^1(\G)}^2 + C_{\delta,2} \|\bu_2\|_{\bH^1(\G)}^2 \|\oc\|_{H^1(\G)}^2, \\
                |\rJ_3| & \leq C \|\obu\|_{\bH^1(\G)} \|c_1\|_{H^1(\G)} \|\ophi\|_{H^1(\G)} \leq \delta_3 \|\obu\|_{\bH^1(\G)}^2 + C_{\delta, 3} \|c_1\|_{L^\infty(I; H^1(\G))}^2 \|\ophi\|_{H^1(\G)}^2, \\
                |\rJ_4| & \leq C \|\bu_2\|_{\bH^1(\G)} \|\oc\|_{H^1(\G)} \|\ophi\|_{H^1(\G)} \leq C \|\bu_2\|_{\bH^1(\G)}^2 \|\oc\|_{H^1(\G)}^2 + \|\ophi\|_{H^1(\G)}^2, \\
                |\rJ_5| & \leq \delta_5 \|\omu\|_{H^1(\G)}^2 + C_{\delta, 5} \|\ophi\|_{H^1(\G)}^2. 
            \end{split}
            \]
            Using Korn's inequality \eqref{eq:korn} in \eqref{eq:err-eq-strong-h1-norms}, and choosing $\delta_i > 0$ sufficiently small yields
            \begin{equation} \label{eq:err-bnd-strong}
                \begin{split}
                    \|\oc(t)\|_{H^1(\G)}^2 & + \|\obu\|_{L^2([0,t]; \bH^1(\G))}^2 + \|\omu\|_{L^2([0,t]; H^1(\G))}^2 \\
                    & \leq C_3 \|\oc(0)\|_{H^1(\G)}^2 + C_4 \int_0^t\{(\|\mu_2\|_{H^1(\G)}^2 + \|\bu_2\|_{\bH^1(\G)}^2) \|\oc\|_{H^1(\G)}^2 + \|\ophi\|_{H^1(\G)}^2\}  \ds,
                \end{split}
            \end{equation}
            where $C_3 > 0$ depends on $\G$ and material parameters, and $C_4 > 0$ depends on $\G, c_1, \bu_2$ and material parameters. Using Lemma \ref{lem:control-of-nonlinearity} (control of nonlinearity) yields
            \[
                \begin{split}
                    \|\oc(t)\|_{H^1(\G)}^2 & + \|\obu\|_{L^2([0,t]; \bH^1(\G))}^2 + \|\omu\|_{L^2([0,t]; H^1(\G))}^2 \leq C \|\oc(0)\|_{H^1(\G)}^2 \\
                    & \quad + C \int_0^t (C_{\phi,1}(s) + \|\mu_2\|_{H^1(\G)}^2 + \|\bu_2\|_{\bH^1(\G)}^2) \|\oc\|_{H^1(\G)}^2 \ds,
                \end{split}
            \]
            where $C_{\phi,1}(t) \in L^1(I)$ is given in \eqref{eq:Cphi1} with $\xi_i = c_i$. Since $C_{\phi,1}(s) + \|\mu_2\|_{H^1(\G)}^2 + \|\bu_2\|_{\bH^1(\G)}^2 \in L^1(I)$ due to a priori bounds on the two solutions, Gr\"onwall's inequality readily implies that
            \[
                \|\oc\|_{L^\infty(I; H^1(\G))}^2 + \|\obu\|_{L^2(I; \bH^1(\G))}^2 + \|\omu\|_{L^2(I; H^1(\G))}^2 \leq C_T \|\oc(0)\|_{H^1(\G)}^2,
            \]
            where $C_T$ depends on $\G, f_0$, the regularity of the two solutions (controlled in turn by $\cE[c_{0,i}]$ and means $\mean{c_{0,i}}$), material parameters, and grows exponentially with $T$. The bound on $\op$ can be recovered from the surjectivity of $\divG : \bH^1_{t, \perp}(\G) \to L^2_\#(\G)$. The bound on $\pa_t \oc$ can be recovered from the error equation for $\oc$ in \eqref{eq:cont-err-eq-phase}. We omit further details. This finishes the proof.
        \end{proof}

        \begin{theorem}[well-posedness of the weak formulation] \label{thm:well-posedness}
            Let $\G$ be of class $C^2$ and $d \in \{2,3\}$. Then for each $c_0 \in H^1(\G)$, there exists a unique weak solution $(\bu, p, c, \mu)$ of \eqref{eq:weak}. Moreover, this weak solution satisfies the energy \emph{equality} \eqref{eq:en-law} and mass conservation property $(c(t) - c_0, 1) = 0$ for all $t \in [0,T]$.
        \end{theorem}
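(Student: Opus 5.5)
Existence of a weak solution with the energy inequality and mass conservation is already provided by Theorem~\ref{thm:existence}. Uniqueness follows directly from Theorem~\ref{thm:cont-dep}: for two weak solutions with the same initial datum $c_{0,1}=c_{0,2}=c_0$, the right-hand side of the continuous dependence estimate vanishes. This gives $\bu_1=\bu_2$, $p_1=p_2$, $c_1=c_2$ and $\mu_1=\mu_2$. Mass conservation is obtained by testing \eqref{eq:weak-phase} with $v=1$, using $b(\bu,c,1)=0$ and $a(\mu,1)=0$. Continuity of $t\mapsto(c(t),1)$ holds because $c\in H^1(I;H^1(\G)')\hookrightarrow C^0([0,T];H^1(\G)')$, so the identity holds for all $t\in[0,T]$.

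The substantive part is upgrading the energy inequality to the equality \eqref{eq:en-law}. This also requires justifying the chain rule $\langle\pa_t c,-\DeltaG c\rangle=\tfrac12\ddt\|\nablaG c\|^2$, which was invoked in the proof of Theorem~\ref{thm:cont-dep}. The plan is to test the weak formulation directly with the solution, as follows.
\begin{enumerate}
\item By Lemma~\ref{lem:reg-weak}, $\mu\in L^2(I;H^1(\G))$ and $c\in L^2(I;H^2(\G))$ with $\DeltaG c\in L^2(I;H^1(\G))$, while $\pa_t c\in L^2(I;H^1(\G)')$.
\item We test \eqref{eq:weak-mom} with $\bw=\bu$ and \eqref{eq:weak-phase} with $v=\sigma\mu$. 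The terms $b(\bu,c,\mu)$ cancel exactly by the centered construction of $b$, and the pressure term drops by \eqref{eq:weak-div}. This yields $2\eta\|D_\G\bu\|^2+\sigma M\|\nablaG\mu\|^2=-\sigma\langle\pa_t c,\mu\rangle$ for a.e.\ $t$.
\item As in \eqref{eq:chem-pot-strong}, the chemical potential equation holds in $H^1(\G)$ for a.e.\ $t$, with $\mu=\varepsilon^{-1}f_0'(c)-\varepsilon\DeltaG c$. Hence
\[
\langle\pa_t c,\mu\rangle=\varepsilon^{-1}\langle\pa_t c,f_0'(c)\rangle+\varepsilon\langle\pa_t c,-\DeltaG c\rangle .
\]
\end{enumerate}

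It remains to prove two chain rules for a.e.\ $t$:
\[
\langle\pa_t c,f_0'(c)\rangle=\ddt(f_0(c),1),\qquad \langle\pa_t c,-\DeltaG c\rangle=\tfrac12\ddt\|\nablaG c\|^2 .
\]
For the quadratic term, I would use the Gelfand-triple type lemma of Lions--Magenes. Take $A=-\DeltaG+\mathrm{id}$. Then $c\in L^2(I;D)$ with $D=\{v\in H^1: Av\in H^1\}$ and $\pa_t c\in L^2(I;H^1(\G)')$. Equivalently, $w:=A^{1/2}c$ lies in $L^2(I;H^{2}\text{-type space})$ with $\pa_t w\in L^2(I;H^{-2}\text{-type space})$ relative to the pivot $L^2$. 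The interpolation-space chain rule then gives $w\in C^0([0,T];L^2)$ and $\ddt\|w\|^2=2\langle\pa_t w,w\rangle$. Concretely, I would mollify $c$ in time, or expand in the Laplace--Beltrami eigenbasis $\{\varphi_k\}$, where everything is a diagonal computation. Truncating to $\Pi_N c$ and passing to the limit using $\Pi_N c\to c$ in $L^2(I;H^3\text{-graph norm})$ and $\Pi_N\pa_t c\to\pa_t c$ in $L^2(I;H^1(\G)')$ is the cleanest route.

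For the potential term, I would mollify in time. Write $c_h$ for a time regularization, for which $\ddt(f_0(c_h),1)=(\pa_t c_h,f_0'(c_h))$ holds classically. Then pass to the limit using three facts: $f_0'(c_h)\to f_0'(c)$ in $L^2(I;H^1(\G))$, which follows from Lemma~\ref{lem:control-of-nonlinearity} with $\xi_1=c_h$, $\xi_2=c$ and the uniform bounds in $L^4(I;L^\infty)\cap L^2(I;W^{1,6})$; $\pa_t c_h\to\pa_t c$ in $L^2(I;H^1(\G)')$; and $f_0(c_h)\to f_0(c)$ in $C^0([0,T];L^1)$ by quartic growth. Integrating from $0$ to $t$ and using continuity of $t\mapsto\cE[c(t)]$ then gives \eqref{eq:en-law} for all $t$. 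This continuity holds because $c\in C^0([0,T];H^1(\G))$, a consequence of the chain rule above, and $f_0$ has quartic growth.

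The main obstacle is this justification of the chain rules at the weak-solution regularity level. In particular, one must verify that the regularized sequences converge in exactly the dual pairings required, $L^2(I;H^1)$ against $L^2(I;(H^1)')$. Lemma~\ref{lem:reg-weak} is precisely what makes $f_0'(c)$ and $\DeltaG c$ admissible test functions.
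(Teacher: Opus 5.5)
Your proof is correct and has the same architecture as the paper's. Uniqueness comes from Theorem~\ref{thm:cont-dep} with $c_{0,1}=c_{0,2}$. You test with $\bw=\bu$ and $v=\sigma\mu$, split $\langle\pa_t c,\mu\rangle$ using Lemma~\ref{lem:reg-weak}, and prove the potential chain rule by mollifying in time.

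The one real difference is how you justify $\langle\pa_t c,-\DeltaG c\rangle=\tfrac12\ddt\|\nablaG c\|^2$.
\begin{itemize}
\item The paper works on $H^{-1}_\#(\G)$ with the inner product $\langle\ell_1,(-\DeltaG)^{-1}\ell_2\rangle$. It treats $\tfrac12\|\nablaG v\|^2$ as a convex, lower semicontinuous functional with $\DeltaG^2 v\in\pa\varphi(v)$ whenever $\DeltaG v\in H^1(\G)$. It then applies Brezis's subdifferential chain rule to $\zm{c}\in H^1(I;H^{-1}_\#(\G))$.
\item You use the linear Lions--Magenes lemma in the spectrally defined shifted triple $D(A)\hookrightarrow L^2(\G)\hookrightarrow D(A)'$, applied to $w=A^{1/2}c$ with $A=\mathrm{id}-\DeltaG$ (equivalently, eigenfunction truncation).
\end{itemize}
Your route avoids convex analysis and the mean-zero reduction. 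The price is subtracting the standard identity $\ddt\|c\|^2=2\langle\pa_t c,c\rangle$. The gain is that it gives $c\in C^0([0,T];H^1(\G))$ directly, which you then use for continuity of $\cE[c(t)]$ up to $t=0$. The paper's route gives absolute continuity of $t\mapsto\|\nablaG c(t)\|^2$ on $[0,T]$, which serves the same purpose.

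One detail in the potential term needs tightening. With $\xi_1=c_h$ in Lemma~\ref{lem:control-of-nonlinearity}, the weight $C_{\phi,1}$ depends on the mollification parameter through $\|c_h(t)\|_{W^{1,6}(\G)}^2$. Uniform $L^1(I)$ bounds on this weight alone do not force $\int_I\|f_0'(c_h)-f_0'(c)\|_{H^1(\G)}^2\dt\to0$. You can close this in either of two ways:
\begin{itemize}
\item Use that $c_h\to c$ uniformly in $H^1(\G)$ on compact subintervals, which follows from $c\in C^0([0,T];H^1(\G))$. This means the quadratic chain rule must be established first.
\item Use that $\|c_h\|_{W^{1,6}(\G)}^2\to\|c\|_{W^{1,6}(\G)}^2$ in $L^1(I)$, which gives uniform integrability of the weight.
\end{itemize}
In either case, handle the time endpoints of the mollification by working on $[s,t]\subset(0,T)$ and letting $s\to0$, using the continuity you already have.
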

        \begin{proof}
            Uniqueness of weak solutions of \eqref{eq:weak} follows immediately from Theorems \ref{thm:existence} and \ref{thm:cont-dep} upon choosing $c_{0,1} = c_{0,2} = c_0 \in H^1(\G)$ in Theorem \ref{thm:cont-dep}. {The} energy \emph{inequality} version of \eqref{eq:en-law} follows from Theorem \ref{thm:existence}. We now upgrade it to an \emph{equality}.

            From Lemma \ref{lem:reg-weak}  we know that $c \in L^\infty(I; H^1(\G)) \cap L^2(I; W^{2,6}(\G))$ and $f_0'(c), \DeltaG c \in L^2(I; H^1(\G))$. Note that the pure Cahn--Hilliard equation is the gradient flow of the energy with respect to the inner product $\langle \ell_1, (-\DeltaG)^{-1} \ell_2 \rangle$ on $H^{-1}_\#(\G) = \{\ell \in H^1(\G)' : \langle \ell, 1 \rangle = 0\}$. On $H^{-1}_\#(\G)$ the Dirichlet energy $\varphi(v) := \frac12 \|\nablaG v\|^2$ for $v \in H^1_\#(\G)$, extended by $+\infty$, is convex, proper, and lower semicontinuous, and $\DeltaG^2 v \in \pa \varphi(v)$ whenever $\DeltaG v \in H^1(\G)$. Since $\langle \pa_t c, 1 \rangle = 0$ and $\DeltaG c \in L^2(I; H^1(\G))$, the chain rule \cite[Lemme~3.3, p.~73]{Brezis1973} applied to $\zm{c} \in H^1(I; H^{-1}_\#(\G))$ shows that $t \mapsto \|\nablaG c(t)\|^2$ is absolutely continuous and $\frac12 \ddt \|\nablaG c\|^2 = \langle \pa_t c, -\DeltaG c \rangle$ for a.e. $t \in I$. 
            
            Next note that the chain rule $\ddt (f_0(c), 1) = \langle \pa_t c, f_0'(c) \rangle$ is also valid; this can be verified by mollifying $c$ in time and using regularity provided by Lemma \ref{lem:reg-weak} to pass to the limit. Then $\sigma \langle \pa_t c, \mu \rangle = \sigma \varepsilon^{-1} \langle \pa_t c, f_0'(c) \rangle - \sigma \varepsilon \langle \pa_t c, \DeltaG c \rangle = \ddt \cE[c]$ for every weak solution. Testing \eqref{eq:weak-mom} with $\bw = \bu$ and \eqref{eq:weak-phase} with $v = \sigma \mu$ then shows that every weak solution satisfies \eqref{eq:en-law} exactly. This completes the proof.
        \end{proof}
        
    \subsection{Higher regularity} \label{subsec:regularity}
        We now turn our attention to the higher regularity statement for $d=2$. For the proof of the main result, we shall need the following maximal $L^2$-regularity property for $\DeltaG^2$ whose proof is given in the Appendix.
        \begin{proposition}[maximal parabolic $L^2$-regularity] \label{prop:maxreg-L2}
            Let $\G$ be of class $C^{m+4}$, $m \geq 0$, and $\varrho > 0$. Then the map $L^2(I; H^{m+4}(\G)) \cap H^1(I; H^m(\G)) \ni u \mapsto (\pa_t u + \varrho \DeltaG^2 u, u(0)) \in L^2(I; H^m(\G)) \times H^{m+2}(\G)$ is a linear homeomorphism with the stability constants depending only on $\G, \varrho, m$ and $T$.
        \end{proposition}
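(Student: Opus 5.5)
We will prove the statement by spectral decomposition of the Laplace--Beltrami operator on $\G$, combined with elliptic regularity on $C^{m+4}$ surfaces. Let $\{\varphi_k\}_{k\ge1}$ be the $L^2(\G)$-orthonormal eigenfunctions of $-\DeltaG$ with eigenvalues $0=\lambda_1<\lambda_2\le\dots$, as in the proof of Theorem~\ref{thm:existence}. The first step is a norm characterization. For $\G$ of class $C^{j}$ and $0\le j\le m+4$, we show
\[
\|v\|_{H^{j}(\G)}^2\simeq \sum_k (1+\lambda_k)^{j}|(v,\varphi_k)|^2 .
\]
For even $j$ this follows by iterating the $H^{s+2}$ elliptic regularity estimate $\|v\|_{H^{s+2}}\lesssim\|\DeltaG v\|_{H^s}+\|v\|$, which holds for $\G$ of class $C^{s+2}$ (the $C^2$ case is \cite[Theorem~1.2]{BenavidesNochettoShakipov2026-scalar}, and the higher-order case follows by the same localization). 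Odd $j$ follow by interpolation, or directly from $a(v,v)=\sum\lambda_k|v_k|^2$ in the case $j=1$. This norm equivalence is the step we expect to be the main obstacle. One must track that $C^{m+4}$ smoothness suffices for $H^{m+4}$ regularity of $\DeltaG^2$, that is, two successive applications of $(\mathrm{id}-\DeltaG)^{-1}$ with the right gain. One must also check that the domains of the powers $(\mathrm{id}-\DeltaG)^{j/2}$ coincide with $H^j(\G)$ with equivalent norms.

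Given this equivalence, the problem decouples into scalar ODEs. Writing $u=\sum u_k(t)\varphi_k$, $f=\sum f_k\varphi_k$ and $u_0=\sum u_{0,k}\varphi_k$, the equation becomes
\[
u_k'+\varrho\lambda_k^2u_k=f_k,\qquad u_k(0)=u_{0,k}.
\]
Set $\omega_k:=(1+\lambda_k)^2$. The space $L^2(I;H^{m+4})\cap H^1(I;H^m)$ is then, up to equivalent norms, the weighted $\ell^2$ space of sequences $(u_k)$ with
\[
\sum_k (1+\lambda_k)^m\big(\omega_k^2\|u_k\|_{L^2(I)}^2+\|u_k'\|_{L^2(I)}^2\big)<\infty .
\]

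The third step is a uniform estimate for the scalar problem $y'+\alpha y=g$, $y(0)=y_0$, with $\alpha=\varrho\lambda_k^2\ge0$. We aim for
\[
(1+\alpha)^2\|y\|_{L^2(I)}^2+\|y'\|_{L^2(I)}^2\le C(T,\varrho)\big(\|g\|_{L^2(I)}^2+(1+\alpha)|y_0|^2\big).
\]
To get it, we multiply by $y'$ and by $\alpha y$, use $\int_I y'y=\tfrac12(y(T)^2-y_0^2)$, and take the cross term $2\alpha\int_I yy'$ to the boundary. This gives $\|y'\|^2+\alpha^2\|y\|^2\le\|g\|^2+\alpha y_0^2$, and $\|y\|_{L^2}$ is bounded by Gr\"onwall with a $T$-dependent constant. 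Since $(1+\alpha)\simeq(1+\lambda_k)^2$ with constants depending on $\varrho$, the weight $(1+\lambda_k)^m(1+\alpha)$ on $y_0$ matches the $H^{m+2}$ norm of $u(0)$. Summing over $k$ gives existence of a solution in the target space, with the stated bound. Uniqueness holds because each scalar ODE has a unique solution.

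For continuity of the forward map, $\pa_tu+\varrho\DeltaG^2u\in L^2(I;H^m)$ is immediate. The trace $u(0)\in H^{m+2}$ comes from the scalar estimate
\[
(1+\alpha)\sup_t|y|^2\lesssim (1+\alpha)^2\|y\|_{L^2}^2+\|y'\|_{L^2}^2,
\]
which follows from $\tfrac{d}{dt}y^2=2yy'$ together with Young's inequality. Summing with weights $(1+\lambda_k)^m$ gives the trace bound. The map is therefore a bounded linear bijection with a bounded inverse, that is, a linear homeomorphism, with constants depending only on $\G$, $\varrho$, $m$ and $T$.
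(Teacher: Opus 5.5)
Your proof is correct, and after the shared first step it takes a genuinely different route from the paper's. Both arguments rest on the same ingredient: $\|\Lambda^s v\|$ with $\Lambda=(\mathrm{id}-\DeltaG)^{1/2}$ is an equivalent $H^s(\G)$-norm for $0\le s\le m+4$, by elliptic regularity on a $C^{m+4}$ surface. From there the paper argues abstractly in $H^m(\G)$ with the inner product $(\Lambda^m\cdot,\Lambda^m\cdot)$:
\begin{itemize}
\item it identifies $\varrho\DeltaG^2$ as the subdifferential of the convex functional $u\mapsto\frac{\varrho}{2}\|\Lambda^m\DeltaG u\|^2$;
\item it invokes Brezis's theory of subdifferential evolutions to get a unique solution with $u'\in L^2(I;H^m(\G))$ for $u_0\in H^{m+2}(\G)$, and upgrades to $L^2(I;H^{m+4}(\G))$ using $\varrho\DeltaG^2u=f-u'$ and elliptic regularity;
\item it obtains boundedness of the forward map from a Lions--Magenes embedding into $C^0([0,T];H^{m+2}(\G))$, which needs the identification of $H^{m+4}(\G)'$ with $H^m(\G)$ through the pivot $H^{m+2}(\G)$;
\item it takes uniqueness from the weak theory of Proposition~\ref{prop:parab}.
\end{itemize}
You replace all of this by diagonalization. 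Your identity $\|y'\|^2+\alpha^2\|y\|^2+\alpha y(T)^2=\|g\|^2+\alpha y_0^2$ is the mode-by-mode form of the energy estimate behind Brezis's theorem, combined with the bound on $\varrho\DeltaG^2u=f-u'$. Your $\sup$-bound is a scalar Lions--Magenes lemma. What you gain is a self-contained argument with explicit dependence on $T$ and $\varrho$. It delivers existence, uniqueness, the a priori bound and the trace estimate at once, without any duality identification. The paper's route is shorter given the citations, and it carries over verbatim to evolutions driven by other (even nonlinear) convex functionals on $H^m(\G)$. Yours requires the operator to be diagonal in the Laplace--Beltrami eigenbasis.

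One small point in your first step: do not rely on interpolation for odd $j$. When $m$ is odd, the top index $j=m+4$ is odd. Interpolating between $H^{m+3}(\G)$ and $H^{m+5}(\G)$ is then unavailable, since $H^{m+5}(\G)$ is not even defined on a $C^{m+4}$ surface. Instead, iterate the elliptic estimate, which holds for odd $s$ as well, down to the base case $j=1$, where $\|v\|^2+a(v,v)=\sum_k(1+\lambda_k)|(v,\varphi_k)|^2$.
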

         {By Lemma~\ref{lem:reg-weak}, Proposition~\ref{prop:cap-force} extends to weak solutions by density.} We sketch the proof of the higher regularity result.
        \begin{theorem}[higher regularity] \label{thm:regularity}
            Let $\G$ be of class $C^{m+4}$, $m \geq 0$, $d = 2$, and $c_0 \in H^{m+2}(\G)$. Suppose additionally that $f_0 \in C^{m+2,1}_{loc}(\bbR)$. Then the weak solution $(\bu, p, c, \mu)$ of \eqref{eq:weak} satisfies
            \begin{equation} \label{eq:spat-reg}
                \begin{split}
                    c \in C([0,T]; H^{m+2}(\G)) \cap H^1(I; H^m(\G)) \cap L^2(I; H^{m+4}(\G)), \qquad \mu \in L^2(I; H^{m+2}(\G)), \\
                    \bu \in L^2(I; \bH^{m+3}_{t,\perp}(\G)), \qquad p \in L^2(I; H^{m+2}_\#(\G)). \qquad \qquad \qquad \qquad
                \end{split}
            \end{equation}
          For $m=0$, the norms of $c, \mu, \bu, p$ in these spaces are bounded by a constant depending only on $\G$, $T$, material parameters, $\|c_0\|_{H^{m+2}(\G)}$, $f_0(0), f_0'(0), f''_0(0)$, and $L$ from \eqref{eq:double-well}. For $m \geq 1$, the constant also depends on $\|f_0\|_{C^{m+2,1}([-R, R])}$ where $R \geq \|c\|_{L^\infty(\G \times I)}$; $R$ is controlled by the estimate for $m=0$ since $c \in C^0([0,T]; H^2(\G)) \subseteq L^\infty([0,T] \times \G)$. 
        \end{theorem}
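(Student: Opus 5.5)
We argue by induction on $m$, treating the phase equation as a linear fourth-order parabolic problem with lower-order and transport terms on the right-hand side. Substituting \eqref{eq:sch-chem} into \eqref{eq:sch-phase} gives
\[
\pa_t c + M\varepsilon \DeltaG^2 c = F := -\bu\cdot\nablaG c + M\varepsilon^{-1}\DeltaG f_0'(c),
\]
so that Proposition~\ref{prop:maxreg-L2} with $\varrho = M\varepsilon$ applies as soon as $F\in L^2(I;H^m(\G))$ and $c_0\in H^{m+2}(\G)$. The velocity is recovered from the Stokes subproblem \eqref{eq:weak-mom}--\eqref{eq:weak-div}, whose forcing is $-\sigma c\nablaG\mu$ (up to the pressure shift). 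This forcing is orthogonal to $E$ by Proposition~\ref{prop:cap-force}, extended to weak solutions. Surface Stokes regularity on a $C^{k+2}$ surface then gives $\bu\in\bH^{k+2}$ and $p\in H^{k+1}$ whenever the forcing lies in $\bH^{k}$; this is obtained by localization or via the Bochner-type reformulation used in Proposition~\ref{prop:Killing}.

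\textbf{Base case $m=0$.} The main difficulty is closing an a priori estimate, since $F$ involves $\bu$, which depends on $c$, and the quantities are coupled. I would derive an $H^2$-type energy estimate at the Galerkin level, or for the weak solution via mollification. The plan is to test the phase equation with $\DeltaG^2 c$, equivalently to differentiate $\tfrac12\|\DeltaG c\|^2$, which gives
\[
\tfrac12\tfrac{d}{dt}\|\DeltaG c\|^2 + M\varepsilon\|\nablaG\DeltaG c\|^2 = (\bu\cdot\nablaG c,\DeltaG^2 c) - M\varepsilon^{-1}(\nablaG f_0'(c),\nablaG\DeltaG c).
\]
The energy law gives $\bu\in L^2(I;\bH^1)$ and $\mu\in L^2(I;H^1)$, with bounds depending only on $\cE[c_0]$. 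For $d=2$ these can be upgraded: with $\mu\in H^1$ and $c\in L^\infty H^1$, the forcing $c\nablaG\mu$ lies in $\bL^{q}$ for every $q<2$. Stokes regularity then gives $\bu\in L^2(I;\bW^{2,q})\subset L^2(I;\bL^\infty)$.

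For the convection term I would integrate by parts once and bound it by $\|\nablaG(\bu\cdot\nablaG c)\|\,\|\nablaG\DeltaG c\|$. The factor $\|\nablaG(\bu\cdot\nablaG c)\|$ is estimated using Ladyzhenskaya/Agmon-type interpolation in two dimensions, namely \eqref{eq:agmon} and $\|\nabla v\|_{L^4}^2\lesssim\|v\|_{H^1}\|v\|_{H^2}$. For the nonlinear term I would use the growth \eqref{eq:double-well}, the bound $c\in L^4(I;L^\infty)$ from Lemma~\ref{lem:reg-weak}, and elliptic $H^3$ regularity for $\DeltaG$ on $C^4$ surfaces. Absorbing the highest-order terms with Young's inequality leaves a Gr\"onwall inequality with $L^1(I)$ coefficients. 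This gives $c\in L^\infty(I;H^2)\cap L^2(I;H^3)$, and hence $\mu\in L^2(I;H^1)$ with $\DeltaG\mu$ controlled.

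With these bounds, $F\in L^2(I;L^2)$ follows. Proposition~\ref{prop:maxreg-L2} then yields $c\in L^2(I;H^4)\cap H^1(I;L^2)$, which embeds in $C([0,T];H^2)$, and consequently $\mu\in L^2(I;H^2)$. The Stokes forcing $c\nablaG\mu$ lies in $L^2(I;\bH^1)$ because $c\in L^\infty(\G\times I)$ and $H^2$ is an algebra in two dimensions. Stokes regularity then gives $\bu\in L^2(I;\bH^3)$ and $p\in L^2(I;H^2)$. All constants depend only on the listed data: the values $f_0(0),f_0'(0),f_0''(0)$ and the constant $L$ control $f_0$ cubically.

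\textbf{Induction step.} Assume the result holds for $m-1$. Then $c\in L^\infty(\G\times I)$ with bound $R$, and
\[
c\in C([0,T];H^{m+1})\cap L^2(I;H^{m+3}),\qquad \bu\in L^2(I;\bH^{m+2}).
\]
Using the algebra and Moser-type composition estimates in $H^s$ on $[-R,R]$, which require $f_0\in C^{m+2,1}$, both $\bu\cdot\nablaG c$ and $\DeltaG f_0'(c)$ lie in $L^2(I;H^m)$. Proposition~\ref{prop:maxreg-L2} lifts $c$ to the stated class. Then $\mu\in L^2(I;H^{m+2})$, the forcing $c\nablaG\mu\in L^2(I;\bH^{m+1})$, and Stokes regularity on $C^{m+4}$ surfaces gives $\bu\in L^2(I;\bH^{m+3})$ and $p\in L^2(I;H^{m+2})$. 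I expect the $m=0$ a priori estimate, especially the convection term, to be the main obstacle.
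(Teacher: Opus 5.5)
Your architecture matches the paper's. Stokes regularity with $L^q$ forcing, $q<2$, gives $\bu\in L^2(I;\bL^\infty(\G))$ for $d=2$. Proposition~\ref{prop:maxreg-L2} with $\varrho=M\varepsilon$ is then applied to $\pa_t c+M\varepsilon\DeltaG^2 c=F$, Stokes regularity is used again with forcing in $L^2(I;\bH^1(\G))$, and $m\geq1$ follows by bootstrapping.

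The difference is your base case, where you first run an $H^2$-level Gr\"onwall estimate to get $c\in L^\infty(I;H^2(\G))$. The paper shows this is unnecessary, so the ``main obstacle'' you anticipate does not arise. The convection term needs no integration by parts: $\bu\cdot\nablaG c\in L^2(I;L^2(\G))$ simply because $\bu\in L^2(I;\bC^0(\G))$ and $\nablaG c\in L^\infty(I;\bL^2(\G))$. For the nonlinearity, the paper writes $\DeltaG f_0'(c)=f_0'''(c)|\nablaG c|^2+f_0''(c)\DeltaG c$. It bounds this in $L^2(I;L^2(\G))$ using only weak-solution regularity:
\begin{itemize}
\item $c\in L^2(I;H^3(\G))$, from Lemma~\ref{lem:reg-weak} plus elliptic regularity;
\item $c\in L^4(I;H^2(\G))$, from $\|\DeltaG c\|^2\le\|\DeltaG c\|_{H^1(\G)'}\|\DeltaG c\|_{H^1(\G)}$;
\item Gagliardo--Nirenberg.
\end{itemize}
Thus $F\in L^2(I;L^2(\G))$ immediately, and $c\in C([0,T];H^2(\G))$ is an output of Proposition~\ref{prop:maxreg-L2}, not an input.

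Your extra step is also incorrect as written. Testing with $\DeltaG^2 c$ actually gives
\[
\frac12\ddt\|\DeltaG c\|^2+M\varepsilon\|\DeltaG^2 c\|^2=-(\bu\cdot\nablaG c,\DeltaG^2 c)+M\varepsilon^{-1}(\DeltaG f_0'(c),\DeltaG^2 c).
\]
Your dissipation $M\varepsilon\|\nablaG\DeltaG c\|^2$ and your term $(\nablaG f_0'(c),\nablaG\DeltaG c)$ come from the test function $-\DeltaG c$, which only reproduces the energy level.

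The corrected estimate does close. Bounding the convection term by $\|\bu\|_{\bL^4(\G)}\|\nablaG c\|_{\bL^4(\G)}\|\DeltaG^2 c\|$ and using Ladyzhenskaya, even the $\bH^1$ bound on $\bu$ suffices. However, the nonlinear term then requires control of $\|\DeltaG f_0'(c)\|$, the same quantity the paper bounds directly. So the corrected estimate is just the maximal-regularity estimate in disguise.

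Moreover, this test is not admissible for the weak solution, since $\pa_t c$ and $\DeltaG^2 c$ both lie only in $L^2(I;H^1(\G)')$. You would have to work at the Galerkin level, where $\bu_N$ need not inherit $\bW^{2,q}$ regularity, and then pass to the limit. The paper's route needs no regularization. Since $c\in L^2(I;H^3(\G))\cap H^1(I;H^1(\G)')\subseteq X_0$, uniqueness in $X_0$ (Proposition~\ref{prop:parab}) identifies $c$ with the solution supplied by Proposition~\ref{prop:maxreg-L2}.
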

        \begin{proof}
            \emph{Step 1: Stokes regularity for $(\bu, p)$}. Recalling that $\mu \nablaG c \perp E$, the test space for the momentum equation can be extended to $\bH^1_t(\G)$. Using $c \in L^\infty(I; H^1(\G))$ in \eqref{eq:weak-mom} we find that $\zm{\mu} \nablaG c \in L^2(I; \bL^{4/3}(\G))$. Then by higher Stokes regularity \cite[Theorem 6.1]{BenavidesNochettoShakipov2025-vector} valid for $\G$ of class $C^3$, we infer that $(\bu, p) \in L^2(I; \bW^{2, \frac43}(\G)) \times L^2(I; W^{1, \frac43}(\G))$. The important  observation here is that $\bu \in L^2(I; \bC^0(\G))$ by the Sobolev embedding $\bW^{2, \frac43}(\G) \hookrightarrow \bC^0(\G)$ valid for $d=2$ (but not for $d=3$).  This implies $\bu\cdot\nablaG c\in L^2(I;L^2(\G))$ because $\nablaG c\in L^\infty(I;\bL^2(\G))$.

            \emph{Step 2: Elliptic regularity for $c$}. By Lemma \ref{lem:reg-weak} (regularity of a weak solution), we know that {$f_0'(c),\mu\in L^2(I;H^1(\G))$}. Therefore, elliptic regularity for $\G$ of class $C^3$ applied to \eqref{eq:weak-chem} implies $c \in L^2(I; H^3(\G))$.
            
            \emph{Step 3: Maximal parabolic regularity for $c$}. With this, we combine \eqref{eq:weak-phase} and \eqref{eq:weak-chem} to find that $c \in L^2(I; H^3(\G)) \cap H^1(I; H^1(\G)')$ satisfies $\pa_t c + M \varepsilon \DeltaG^2 c = F$ in $L^2(I; H^1(\G)')$ where $F = M \varepsilon^{-1} \DeltaG f_0'(c) - \bu \cdot \nablaG c$. From Step 1, we already know that $\bu \cdot \nablaG c \in L^2(I; L^2(\G))$.
            
            For the other term, we compute $\DeltaG f_0'(c) = \divG (f_0''(c) \nablaG c) = f_0'''(c) |\nablaG c|^2 + f_0''(c) \DeltaG c$. Term $f_0''(c) \DeltaG c$ can be estimated using \eqref{eq:double-well} as follows: $\|f_0''(c) \DeltaG c\| \leq \|f_0''(c)\|_{L^4(\G)} \|\DeltaG c\|_{L^4(\G)} \leq C (1 + \|c\|_{L^8(\G)}^2) \|c\|_{H^3(\G)} \leq C (1 + \|c\|_{L^\infty(I; H^1(\G))}^2) \|c\|_{H^3(\G)}$, which is in $L^2(I)$. For $f_0'''(c) |\nablaG c|^2$, we first note that since $c \in L^\infty(I; H^1(\G)) \cap L^2(I; H^3(\G))$, elliptic regularity together with
            \[
                \begin{split}
                    \int_I \|\DeltaG c\|^4 \dt & \leq \int_I \|\DeltaG c\|_{H^1(\G)'}^2 \|\DeltaG c\|_{H^1(\G)}^2 \dt \leq \int_I \|c\|_{H^1(\G)}^2 \|c\|_{H^3(\G)}^2 \dt \\
                    & \leq \|c\|_{L^\infty(I; H^1(\G))}^2 \|c\|_{L^2(I; H^3(\G))}^2
                \end{split}
            \]
            readily yields that $c \in L^4(I; H^2(\G))$. Using \eqref{eq:double-well} together with Gagliardo--Nirenberg interpolation inequality, we infer
            \[
                \begin{split}
                    \|f_0'''(c) |\nablaG c|^2\| & \leq \|f_0'''(c)\|_{L^6(\G)} \|\nablaG c\|_{\bL^6(\G)}^2 \leq C (1 + \|c\|_{L^6(\G)}) \|c\|_{W^{1,6}(\G)}^2 \\
                    & \leq C (1 + \|c\|_{L^\infty(I; H^1(\G))}) \|c\|_{H^1(\G)}^{2/3} \|c\|_{H^2(\G)}^{4/3} \leq C(c) \|c\|_{H^2(\G)}^{4/3}.
                \end{split}
            \]
            Taking the $L^2(I)$-norm on both sides and using $c \in L^4(I; H^2(\G))$ yields $f_0'''(c) |\nablaG c|^2 \in L^2(I; L^2(\G))$, and so is $F = M \varepsilon^{-1} \DeltaG f_0'(c) - \bu \cdot \nablaG c$.

            By using Proposition \ref{prop:maxreg-L2} with $m=0$ we infer that $c \in L^2(I; H^4(\G)) \cap H^1(I; L^2(\G))$. Then, it is easy to see that $\mu \in L^2(I; H^2(\G))$. 
            
            \emph{Step 4: Stokes regularity for $(\bu, p)$ again}. Finally, going back to the Stokes block, we find that $\mu \nablaG c \in L^2(I; \bH^1_t(\G))$ so that $(\bu, p) \in L^2(I; \bH^3(\G)) \times L^2(I; H^2(\G))$. 
            
            Higher spatial regularity for $m \geq 1$ can be deduced by bootstrapping using Proposition \ref{prop:maxreg-L2}, provided that $f_0$ is sufficiently regular.  We omit further details. 
        \end{proof}

        \begin{remark}[higher regularity in time]
        With sufficiently smooth data, higher regularity in time can also be obtained by bootstrapping. The resulting statements become cumbersome because $(c,\mu)$ and $(\bu,p)$ gain temporal regularity at different rates, and we do not pursue them here. Since the surface is closed and stationary, there are no boundary compatibility conditions, and only $c$ requires independent initial data.
        \end{remark}


\section{Discretization} \label{sec:method}
    In this section, we propose a structure-preserving trace finite element method  (TraceFEM) for the surface Stokes--Cahn--Hilliard system \eqref{eq:sch}. TraceFEM is a  finite element method for solving PDEs defined on embedded surfaces by utilizing tessellations of the ambient space~\cite{OlshanskiiReuskenGrande2009, OlshanskiiReusken2017, BurmanHansboLarsonMassing2018}.

    \subsection{TraceFEM} \label{subsec:tracefem}
        Let $\Omega$ be a polyhedral domain containing $\G$ given by the zero of a sufficiently smooth level-set function $\phi : \Omega \to \bbR$. Let $\{\wt \cT_h\}_{h>0}$ be a family of quasi-uniform  triangulations of $\Omega$ with characteristic mesh size $h$. For practical purposes we assume 
        a piecewise polynomial of degree $k_\phi \geq 1$ continuous approximation of $\phi$ on {$\wt\cT_h$}, denoted by $\phi_h$, such that
        \begin{equation} \label{eq:phi}
        \|\phi - \phi_h\|_{L^\infty(\Omega)} + h \|\nabla(\phi - \phi_h)\|_{L^\infty(\Omega)} \leq C h^{k_\phi + 1}.
        \end{equation}
For example, $\phi_h$ {can be} a Lagrange interpolant of $\phi$ of order $k_\phi \geq 1$ or can be recovered as a numerical solution of an equation that defines $\Gamma$ implicitly. 
        
    We define the discrete surface as $\G_h := \{\bx \in \Omega : \phi_h(\bx) = 0\}$. 
 If $k_\phi=1$, then integration over level sets of $\phi_h$ is straightforward. Otherwise one may resort  to the isoparametric TraceFEM \cite{Lehrenfeld2016, LehrenfeldReusken2018}. 
  Thus, $\G_h$ is a $C^0$ piecewise smooth surface not fitted to the background mesh $\wt \cT_h$. We define the active mesh and the computational domain as
        \begin{equation} \label{eq:cTh}
            \cT_h := \{T \in \wt \cT_h : T \cap \G_h \neq \emptyset\}, \quad \Omega_h := \cup_{T \in \cT_h} T.
        \end{equation}
        We shall use $(\cdot, \cdot)_{\Xi}$ to denote the $L^2(\Xi)$-inner product for scalar-, vector-, and tensor-valued functions, and $\|\cdot\|_{\Xi} := \|\cdot\|_{L^2(\Xi)}$ for $\Xi \in \{\Omega_h, \G_h\}$.
    
        Let $ \bV_h \subseteq \bH^1(\Omega_h), Q_h \subseteq H^1(\Omega_h)$, and $V_h \subseteq H^1(\Omega_h)$ be families of Lagrange elements over $\cT_h$ of degrees $k_p + 1$, $k_p$, and $k_c$, respectively. We consider $k_p, k_c \geq 1$ and let 
        $$Q_{h, \#} := \{q_h \in Q_h: (q_h, 1)_{\G_h} = 0\}, \quad V_{h, \#} := \{v_h \in V_h: (v_h, 1)_{\G_h} = 0\}.$$ 
        
        We define $\bn_h := \frac{\nabla \phi_h}{|\nabla \phi_h|}$, a piecewise smooth unit normal to $\G_h$, $\bP_h := \bI - \bn_h \otimes \bn_h$, $\bB_h := \bP_h \operatorname{symm}(\nabla \bn_h) \bP_h$ elementwise. Furthermore, we let $\wt \bn_h$ be an approximation of $\bn$ one order higher than $\bn_h$; this is needed for optimal accuracy of the Stokes block \cite{HansboLarsonLarsson2020, JankuhnOlshanskiiReuskenZhiliakov2021}. 
        
        Lastly, we let $\{\be_{h,i}\}_{i=1}^{\dim E} \subseteq \bP_h \bV_h$ be the $\bL^2(\G_h)$-orthonormal tangential basis for the discrete Killing space $E_h$; its computation will be discussed {in Section~\ref{subsec:Killing-identif}}. We then define $\cP_h$ to be the $\bL^2(\G_h)$-orthogonal  projection  onto $E_h$.  
        
    \subsection{Stabilization and bilinear forms} \label{subsec:stabilization}
        For $l=0,1$, we define the following stabilization and penalty terms: for all $u_h, v_h \in V_h$ or $Q_h$, and all $\bu_h, \bv_h \in \bV_h$,
        \begin{equation} \label{eq:stab}
            \begin{split}
                & s_l^{\mathrm{ext}}(u_h, v_h) := h^{1-2l} (\bn_h \cdot \nabla u_h, \bn_h \cdot \nabla v_h)_{\Omega_h}, \\
                & \bs_l^{\mathrm{ext}}(\bu_h, \bv_h) := h^{1-2l} ((\nabla \bu_h) \bn_h, (\nabla \bv_h) \bn_h)_{\Omega_h}, \\
                & \bs^{\mathrm{tan}}(\bu_h, \bv_h) := h^{-2} (\wt \bn_h \cdot \bu_h, \wt \bn_h \cdot \bv_h)_{\G_h}.
            \end{split}
        \end{equation}
        To control the Killing modes of the discrete velocity, we {include} the additional Killing penalty term:
        \begin{equation}
            \bs^{\mathrm{Kil}}(\bu_h, \bv_h) := h^{-2} (\cP_h  \bu_h, \cP_h  \bv_h)_{\G_h}.
        \end{equation}
        The extension terms stabilize the $H^l$ inner products. We next define the following bilinear forms: for all $u_h, v_h \in V_h$ or $Q_h$, and all $\bu_h, \bv_h \in \bV_h$,
        \begin{equation} \label{eq:bilinear-forms}
            \begin{split}
                (u_h, v_h)_h & := (u_h, v_h)_{\G_h} + s_0^{\mathrm{ext}}(u_h, v_h), \\
                a_h(u_h, v_h) & := (\nabla_{\G_h} u_h, \nabla_{\G_h} v_h)_{\G_h} + s_1^{\mathrm{ext}}(u_h, v_h), \\
                \ba_{\G_h}(\bu_h, \bv_h) & := (D_{\G_h} \bu_h - (\bu_h \cdot \bn_h) \bB_h, D_{\G_h} \bv_h - (\bv_h \cdot \bn_h) \bB_h)_{\G_h}, \\ 
                \ba_h(\bu_h, \bv_h) & := \ba_{\G_h}(\bu_h, \bv_h) + \bs^{\mathrm{tan}}(\bu_h, \bv_h) + \bs_1^{\mathrm{ext}} (\bu_h, \bv_h) + \bs^{\mathrm{Kil}}(\bu_h, \bv_h),
            \end{split}
        \end{equation}
        Here, $\nabla_{\G_h} = \bP_h \nabla$ and $D_{\G_h} = \bP_h \operatorname{symm} (\nabla) \bP_h$. The expression $D_{\G_h} \bv_h - (\bv_h \cdot \bn_h) \bB_h$ is a consistent replacement for $D_{\G_h} (\bP_h \bv_h)$ (since $D_\G \bv = D_\G (\bP \bv) + (\bv \cdot \bn) \bB$), which avoids differentiating the discontinuous projection; cf. \cite{HansboLarsonLarsson2020}.

    \subsection{The semidiscrete method} \label{subsec:semidiscrete}
        TraceFEM for the surface Stokes--Cahn--Hilliard problem reads: given $c_0$ with $\cE[c_0] < \infty$, find $(\bu_h, p_h, c_h, \mu_h) \in L^2(I; \bV_h) \times L^2(I; Q_{h, \#}) \times H^1(I; V_h) \times L^2(I; V_h)$ such that for a.e. $t \in (0,T]$ and all $(\bw_h, q_h, v_h, g_h) \in \bV_h \times Q_{h, \#} \times V_h \times V_h$,
        \begin{equation} \label{eq:semidiscrete}
            \begin{split}
                2 \eta \, \ba_h(\bu_h, \bw_h) + (\nabla_{\G_h} p_h, \bw_h)_{\G_h} & = \sigma b_h(\cP_h^\perp   \bw_h, c_h, \mu_h), \\
                (\nabla_{\G_h} q_h, \bu_h)_{\G_h} - s_0^{\mathrm{ext}}(p_h, q_h) & = 0, \\
                (\pa_t c_h, v_h)_h + b_h(\cP_h^\perp   \bu_h, c_h, v_h) + M a_h(\mu_h, v_h) & = 0, \\
                (\mu_h, g_h)_h & = \varepsilon^{-1} (f_0'(c_h), g_h)_{\G_h} + \varepsilon a_h(c_h, g_h),
            \end{split}
        \end{equation}
        and $c_h(0) = I_h c_0^e$, where $I_h : H^1(\Omega_h) \to V_h$ {is a suitable  projection or interpolation procedure.} Here, $b_h(\bw, v, g) := (\bw \cdot \nabla_{\G_h} v, g - \langle g \rangle_{\G_h})_{\G_h}$, the discrete counterpart  of $b$.

    \subsection{Structure preservation} \label{subsec:structure-preservation}
        Upon testing \eqref{eq:semidiscrete} with $\bw_h = \bu_h, q_h = p_h, v_h = \sigma \mu_h$ and $g_h = \sigma \pa_t c_h$, we find that the solution $(\bu_h, p_h, c_h, \mu_h)$ satisfies the following energy balance for $t \in (0, T]$:
        \begin{equation} \label{eq:discr-en-law}
            \ddt \cE_h[c_h(t)] + 2 \eta \, \ba_h(\bu_h, \bu_h) + s_0^{\mathrm{ext}}(p_h, p_h) + \sigma M a_h(\mu_h, \mu_h) = 0,
        \end{equation}
        where
        \begin{equation}
            \cE_h[v_h] := \sigma \left( \frac{\varepsilon}{2} a_h(v_h, v_h) + \varepsilon^{-1} (f_0(v_h), 1)_{\G_h} \right).
        \end{equation}
        Furthermore, upon testing the third equation in \eqref{eq:semidiscrete} with $v_h = 1 \in V_h$, we obtain the conservation of the total phase
        \begin{equation} \label{eq:discr-mass-conserv}
            \ddt (c_h, 1)_{\G_h} = -b_h(\cP_h^\perp   \bu_h, c_h, 1) = 0,
        \end{equation}
        where we crucially used the fact that $b_h$ has the centered form. 

        \begin{remark}[recommended choice of polynomial degrees] \label{rmk:degrees}
            To ensure equilibration of geometric and discretization errors, we recommend choosing $k_\phi = k_c = k_p+1$ in practice.
        \end{remark}

\begin{remark}[on handling Killing vector fields]
        The numerical treatment of Killing vector fields is a known issue in surface flow problems. A positive tangential friction term $\zeta\bu$, with $\zeta$ independent of the mesh size, removes the kernel of the surface Stokes operator. Retaining velocity inertia instead leads to an evolution problem which conserves the Killing field from the initial state. If neither modification is appropriate for the model, an alternative is the regularization and filtering approach of~\cite{BonitoDemlowLicht2020}. That paper introduced an artificial friction term, leading to a perturbed Stokes problem of the form
\[
    -2\eta\Delta_S\bu+\delta\bu+\nablaG p=\mathbf{f},
    \qquad \delta=h^\alpha,\quad 1\leq\alpha\leq2,
\]
and {identified} approximate Killing fields through a discrete Stokes eigenvalue problem. For $\alpha=2$, their lowest-order BDM method achieves optimal second-order $L^2$ accuracy after removal of the Killing component.

For the coupled surface Stokes--Cahn--Hilliard problem, artificial friction introduces additional dissipation and thus modifies the energy balance of the friction-free model. Furthermore, removing the Killing component only from the transport velocity generally leaves an extra term in the energy balance. Our method applies the same projection in the transport and capillary terms and penalizes the approximate Killing component of the velocity. This yields the cancellation between the coupling terms and {preserves the energy balance of the stabilized semidiscrete problem.}
\end{remark}

    \subsection{Fully discrete method} \label{subsec:fully-discrete}
        Let $0 = t^0 < t^1 < \dots < t^N = T$ be a partition of $[0,T]$ with steps $\tau_n := t^n - t^{n-1}$.  For a generic quantity $\zeta$ we write $\zeta^n \approx \zeta(t^n), \delta \zeta^n := \zeta^n - \zeta^{n-1}$, and $\dtau \zeta^n := \tau_n^{-1} \delta \zeta^n$. 
        
        Cahn--Hilliard dynamics typically exhibits a rapid stage of spinodal decomposition followed by slow coarsening, which motivates second-order time discretization with variable time steps. We use a midpoint scheme with a secant approximation of the potential derivative; see~\cite{DuNicolaides1991,GomezHughes2011} for related energy-stable discretizations of the Cahn--Hilliard equation. For the quartic potential $f_0(s)=(s^2-1)^2/4$, we set $c_h^{n+1/2}:=\tfrac12(c_h^n+c_h^{n+1})$ and define
\[
    \wh f(a,b):=\frac14(a+b)(a^2+b^2-2).
\]
This expression equals $(f_0(a)-f_0(b))/(a-b)$ for $a\neq b$ and satisfies $\wh f(b,b)=f_0'(b)$.

 The fully discrete method now reads: Given $c_h^n$, find
$(\bu_h^{n+1/2},p_h^{n+1/2},c_h^{n+1},\mu_h^{n+1/2})
\in \bV_h\times Q_{h,\#}\times V_h\times V_h$
such that
        \begin{equation} \label{eq:fd}
            \begin{split}
                2 \eta \, \ba_h(\bu_h^{n+1/2}, \bw_h) + (\nabla_{\G_h} p_h^{n+1/2}, \bw_h)_{\G_h} - \sigma b_h(\cP_h^\perp   \bw_h, c_h^{n+1/2}, \mu_h^{n+1/2}) & = 0, \\
                (\nabla_{\G_h} q_h, \bu_h^{n+1/2})_{\G_h} - s_0^{\mathrm{ext}}(p_h^{n+1/2}, q_h) & = 0, \\
                (\dtau c_h^{n+1}, v_h)_h + b_h(\cP_h^\perp   \bu_h^{n+1/2}, c_h^{n+1/2}, v_h) + M a_h(\mu_h^{n+1/2}, v_h) & = 0, \\
                (\mu_h^{n+1/2}, g_h)_h - \varepsilon^{-1} (\wh f(c_h^{n+1}, c_h^n), g_h)_{\G_h} - \varepsilon a_h(c_h^{n+1/2}, g_h) & = 0.
            \end{split}
        \end{equation}
 for all $(\bw_h,q_h,v_h,g_h)\in\bV_h\times Q_{h,\#}\times V_h\times V_h$. 
        Testing with $\bw_h = \bu_h^{n+1/2}$, $q_h = p_h^{n+1/2}$, $v_h = \sigma \mu_h^{n+1/2}$, $g_h = \sigma \dtau c_h^{n+1}$ and using
        \[
            \begin{split}
                (\wh f(c_h^{n+1}, c_h^n), \delta c_h^{n+1})_{\G_h} & = (f_0(c_h^{n+1}) - f_0(c_h^n), 1)_{\G_h}, \\
                a_h(c_h^{n+1/2}, \delta c_h^{n+1}) & = \tfrac12 \big[ a_h(c_h^{n+1}, c_h^{n+1}) - a_h(c_h^n, c_h^n) \big]
            \end{split}
        \]
        gives the \emph{exact} discrete energy equality,
        \begin{equation} \label{eq:fd2-energy}
            \dtau \cE_h[c_h^{n+1}] + 2 \eta \, \ba_h(\bu_h^{n+1/2}, \bu_h^{n+1/2}) + s_0^{\mathrm{ext}}(p_h^{n+1/2}, p_h^{n+1/2}) + \sigma M a_h(\mu_h^{n+1/2}, \mu_h^{n+1/2}) = 0.
        \end{equation}
 This identity holds for every $\tau_{n+1}>0$ for which a solution of~\eqref{eq:fd} exists. Thus, time discretization introduces no additional dissipation beyond that present in the semidiscrete energy law.
Furthermore, testing the phase equation with $v_h=1$ also gives total phase conservation:
\[
    (c_h^{n+1},1)_{\G_h}=(c_h^n,1)_{\G_h}
    =(c_h^0,1)_{\G_h}.
\]

\begin{remark}[Rannacher startup] \label{rem:rannacher-startup}
Midpoint time stepping provides weak damping of stiff dissipative modes, which can reduce accuracy for rough initial data. To damp these modes, one can use Rannacher startup~\cite{Rannacher1984}, replacing the first time interval by two backward Euler half-steps before switching to~\eqref{eq:fd}. Further initial intervals may be treated similarly. This procedure is known to preserve second-order convergence in classical diffusion settings. The startup steps conserve the total phase but do not satisfy the exact energy identity~\eqref{eq:fd2-energy}.
\end{remark}

        
\section{Numerical experiments} \label{sec:numerical-experiments}
    The numerical method for surfaces in $\bbR^3$ is implemented in \texttt{NGSolve/ngsxfem} \cite{Schoberl2009, Lehrenfeld2021}, which supports higher-order cut geometry. 

      \subsection{Identification of Killing vector fields}
    \label{subsec:Killing-identif}
    
    Explicit formulas for Killing fields are generally unavailable.
    We therefore supplement the method with a procedure for identifying
    approximate Killing fields. Throughout this subsection, we consider
    closed, connected surfaces $\G\subset\bbR^3$.
    
    We distinguish between the space $E$ of intrinsic Killing fields
    and the subspace induced by infinitesimal rigid motions of the
    ambient space. To define the latter, let
    \[
    \cR :=
    \{\br:\bbR^3\to\bbR^3:
    \br(\bx)=\ba+\bomega\times\bx,\quad
    \ba,\bomega\in\bbR^3\}
    \]
    and set
    \[
    E^{\mathrm{extr}} :=
    \{\br|_\G:\br\in\cR,\quad
    \br\cdot\bn=0\text{ on }\G\}.
    \]
    Since $\operatorname{symm}(\nabla\br)=\bzero$ for every
    $\br\in\cR$, a tangential restriction satisfies
    \[
    D_\G(\br|_\G)
    =\bP\operatorname{symm}(\nabla\br)\bP
    =\bzero.
    \]
    Hence $E^{\mathrm{extr}}\subseteq E$. {On a smooth closed surface with strictly positive Gaussian curvature, infinitesimal tangential rigidity implies that every tangential field with $D_\G\bv=0$ is the restriction of an ambient infinitesimal rigid motion; see~\cite[Vol.~V, Ch.~12, Thm.~10]{Spivak1999}. Hence $E^{\mathrm{extr}}=E$ for such surfaces.}

    This equality need not hold at lower regularity.
    Bartzos et al.~\cite[Section~7, Corollary~10]{BartzosEtAll2018}
    construct $C^1$ isometric embeddings of the round unit sphere
    into arbitrarily small balls in $\bbR^3$. Their images are
    compact, closed, connected surfaces and retain the
    three-dimensional intrinsic Killing algebra of the round
    metric. However they admit no ambient rotational symmetry and so their extrinsic Killing space is trivial.

   To the best of our knowledge,  whether $E^{\mathrm{extr}}=E$ holds for a general smooth  compact connected  surface without a boundary  embedded in $\mathbb{R}^3$ is {an} open question, and so one may choose to accept $E^{\mathrm{extr}}=E$ as an additional assumption.
    The first procedure below does not require that.
    Otherwise, if all Killing fields of the surface under consideration are
    known or assumed to be extrinsic, the second simpler procedure can
    replace the first.
    \smallskip
    
    \noindent
    \textbf{Elliptic eigenvalue problem (intrinsic $E$).}
    The kernel of the continuous bilinear form  $(D_\G\bu,D_\G\bv)_\G$ on tangential vector fields is precisely  $E$. This suggests identifying  discrete Killing fields   through an eigenvalue problem. Let
    $\{\bphi_i\}_{i=1}^{N_u}$ be the standard nodal basis of   $\bV_h$, and define
    \[
    \begin{aligned}
    	\bA_{ij}
    	&:=
    	\ba_{\G_h}(\bphi_i,\bphi_j)
    	+{\bs^{\mathrm{tan}}}(\bphi_i,\bphi_j)
    	+\bs_1^{\mathrm{ext}}(\bphi_i,\bphi_j),\\
    	\bM_{ij}
    	&:=
    	(\bphi_i,\bphi_j)_{\G_h}
    	+\bs_0^{\mathrm{ext}}(\bphi_i,\bphi_j).
    \end{aligned}
    \]
    Note that the Killing penalty is excluded from $\bA$.   The matrix $\bA$ is symmetric positive semidefinite, while     $\bM$ is positive definite. We solve
    \begin{equation}\label{eq:eigenpb}
    	\bA V=\lambda\bM V,
    	\qquad V\in\bbR^{N_u}\setminus\{0\}.
    \end{equation}
    No assumption that the discrete surface has a trivial Killing space is needed.
    
    Let $n_E=\dim E$. For a spectrally convergent approximation,  the first $n_E$ generalized eigenvalues converge to zero, whereas the next converges to the first positive eigenvalue
    of the continuous operator:
    \[
    \lambda_{h,j}\to0\quad (1\le j\le n_E),
    \qquad
    \lambda_{h,n_E+1}\to\lambda_{n_E+1}>0.
    \]
    This motivates identifying approximate Killing fields from    the cluster of eigenvalues tending to zero under mesh    refinement. Related spectral identification and filtering    for a surface Stokes discretization were analyzed    in~\cite[Section~6]{BonitoDemlowLicht2020}.
    Our eigenproblem does not impose a divergence constraint,    since the kernel of the continuous strain form already    consists of Killing fields, {and is therefore generally less expensive than a discrete Stokes eigenproblem.}  In Table~\ref{tab:eigs}, we examine the convergence of the eigenvalues of~\eqref{eq:eigenpb} under mesh refinement     for different orders of geometry approximation.

    Since $\dim E\le3$ for a connected two-dimensional surface,    we compute the four smallest generalized eigenvalues    and their eigenvectors. Lanczos iteration can be used when $\bA$ is symmetric positive semidefinite. 
    
    For each selected eigenvector $V^{(j)}$, let
    $
    \bz_{h,j}:=\sum_{i=1}^{N_u}V_i^{(j)}\bphi_i.
    $
    We define
    \[
    E_h:=\operatorname{span}
    \{\bP_h\bz_{h,j}:j\text{ as selected}\}
    \]
    and orthonormalize these fields in $\bL^2(\G_h)$.    This gives the tangential basis used to define $\cP_h$.

    If a sequence of refined grids is available, the separation of the small eigenvalues should 
    be checked against mesh refinement. Otherwise an ad hoc criterion, e.g., $\lambda_i \leq h^\ell$ for some $\ell>0$, can be used.  Since the surface is stationary, the selected space and  its projection are computed once before time stepping.
    \smallskip
    
    \noindent
    \textbf{Restriction of ambient rigid motions:
    	a $6$-by-$6$ eigenvalue problem (extrinsic).}
    If  $E^{\mathrm{extr}}=E$, it suffices to find the    infinitesimal rigid motions whose restrictions are tangential.    Let $\{\br_i\}_{i=1}^6$ be a basis of $\cR$ and define
    \[
    \wt\bA_{ij}
    :=(\br_i\cdot\wt\bn_h,\br_j\cdot\wt\bn_h)_{\G_h},
    \qquad
    \wt\bM_{ij}
    :=(\br_i,\br_j)_{\G_h}.
    \]
    The matrix $\wt\bA$ is symmetric positive semidefinite,    and $\wt\bM$ is positive definite.  
    
    We solve the generalized eigenvalue problem
    \begin{equation}\label{eq:eigenpb-small}
    	\wt\bA W=\lambda\wt\bM W,
    	\qquad W\in\bbR^6\setminus\{0\}.
    \end{equation}
    For $\br_W:=\sum_{i=1}^6W_i\br_i$, its Rayleigh quotient is
    \[
    \frac{W^T\wt\bA W}{W^T\wt\bM W}
    =
    \frac{\|\br_W\cdot\wt\bn_h\|_{\G_h}^2}
    {\|\br_W\|_{\G_h}^2}.
    \]
    Thus small eigenvalues identify ambient rigid motions    with small normal components. With exact geometry,    the zero eigenspace corresponds precisely to    $E^{\mathrm{extr}}$; with approximate geometry,    the associated eigenvalues tend to zero as the    geometry and normal approximation improve.
    
    The problem~\eqref{eq:eigenpb-small} can be solved directly.    For the selected eigenvectors, we tangentially project the corresponding fields $\br_W$ and orthonormalize their restrictions in $\bL^2(\G_h)$, as above.    When $E^{\mathrm{extr}}=E$, these fields provide an  alternative inexpensive construction of $E_h$. Otherwise, the procedure detects only the extrinsic subspace and    can be used to cross-check~\eqref{eq:eigenpb}.   A disagreement between the two procedures on a coarse mesh calls for refinement or independent information about the surface symmetries.

        \begin{table}{ \small 
            \centering
            \resizebox{\linewidth}{!}{
            \begin{tabular}{|c|c|c|cc|c|c|c|c|cc|}
            \hline
            \multirow{2}{*}{$\ell$} & \multirow{2}{*}{$h$} & \multirow{2}{*}{ndof} & \multicolumn{2}{c|}{$\lambda_1$} & \multirow{2}{*}{$\lambda_2$} & \multirow{2}{*}{$\lambda_3$} & \multirow{2}{*}{$\lambda_4$} & \multirow{2}{*}{$\lambda_2/\lambda_1$} & \multicolumn{2}{c|}{$\|\be_h - \be^{\rm true}\|_{L^2(\Gamma_h)}$} \\
            \cline{4-5}\cline{10-11}
                & & & value & rate & & & & & error & rate \\
            \hline
            \multicolumn{11}{|c|}{$k = 1$} \\
            \hline
            $\mathtt{0}$ & $\mathtt{4.65e\text{-}1}$ & $\mathtt{3.34e3}$ & $\mathtt{8.32e\text{-}3}$ & -- & $\mathtt{0.1578216}$ & $\mathtt{0.1589538}$ & $\mathtt{0.1610467}$ & $\mathtt{1.90e1}$ & $\mathtt{1.97e\text{-}2}$ & -- \\
            $\mathtt{1}$ & $\mathtt{2.33e\text{-}1}$ & $\mathtt{1.34e4}$ & $\mathtt{1.99e\text{-}3}$ & $\mathtt{2.06}$ & $\mathtt{0.1353458}$ & $\mathtt{0.1365392}$ & $\mathtt{0.1368615}$ & $\mathtt{6.82e1}$ & $\mathtt{4.81e\text{-}3}$ & $\mathtt{2.03}$ \\
            $\mathtt{2}$ & $\mathtt{1.17e\text{-}1}$ & $\mathtt{5.45e4}$ & $\mathtt{4.80e\text{-}4}$ & $\mathtt{2.02}$ & $\mathtt{0.1292626}$ & $\mathtt{0.1294376}$ & $\mathtt{0.1297776}$ & $\mathtt{2.69e2}$ & $\mathtt{1.17e\text{-}3}$ & $\mathtt{2.01}$ \\
            $\mathtt{3}$ & $\mathtt{5.82e\text{-}2}$ & $\mathtt{2.17e5}$ & $\mathtt{1.20e\text{-}4}$ & $\mathtt{2.01}$ & $\mathtt{0.1276289}$ & $\mathtt{0.1276670}$ & $\mathtt{0.1282280}$ & $\mathtt{1.07e3}$ & $\mathtt{3.01e\text{-}4}$ & $\mathtt{1.97}$ \\
            $\mathtt{4}$ & $\mathtt{2.91e\text{-}2}$ & $\mathtt{8.68e5}$ & $\mathtt{2.99e\text{-}5}$ & $\mathtt{2.00}$ & $\mathtt{0.1272158}$ & $\mathtt{0.1272259}$ & $\mathtt{0.1278617}$ & $\mathtt{4.26e3}$ & $\mathtt{7.62e\text{-}5}$ & $\mathtt{1.98}$ \\
            \hline
            \multicolumn{11}{|c|}{$k = 2$} \\
            \hline
            $\mathtt{1}$ & $\mathtt{2.33e\text{-}1}$ & $\mathtt{7.88e4}$ & $\mathtt{2.10e\text{-}5}$ & -- & $\mathtt{0.1271166}$ & $\mathtt{0.1271231}$ & $\mathtt{0.1277614}$ & $\mathtt{6.06e3}$ & $\mathtt{1.31e\text{-}4}$ & -- \\
            $\mathtt{2}$ & $\mathtt{1.17e\text{-}1}$ & $\mathtt{3.20e5}$ & $\mathtt{1.03e\text{-}6}$ & $\mathtt{4.30}$ & $\mathtt{0.1270807}$ & $\mathtt{0.1270811}$ & $\mathtt{0.1277427}$ & $\mathtt{1.24e5}$ & $\mathtt{1.16e\text{-}5}$ & $\mathtt{3.46}$ \\
            $\mathtt{3}$ & $\mathtt{5.82e\text{-}2}$ & $\mathtt{1.28e6}$ & $\mathtt{6.07e\text{-}8}$ & $\mathtt{4.09}$ & $\mathtt{0.1270786}$ & $\mathtt{0.1270787}$ & $\mathtt{0.1277413}$ & $\mathtt{2.09e6}$ & $\mathtt{1.26e\text{-}6}$ & $\mathtt{3.20}$ \\
            \hline
            \multicolumn{11}{|c|}{$k = 3$} \\
            \hline
            $\mathtt{1}$ & $\mathtt{2.33e\text{-}1}$ & $\mathtt{2.35e5}$ & $\mathtt{8.76e\text{-}7}$ & -- & $\mathtt{0.1270736}$ & $\mathtt{0.1270742}$ & $\mathtt{0.1277347}$ & $\mathtt{1.45e5}$ & $\mathtt{2.72e\text{-}5}$ & -- \\
            $\mathtt{2}$ & $\mathtt{1.17e\text{-}1}$ & $\mathtt{9.55e5}$ & $\mathtt{1.17e\text{-}8}$ & $\mathtt{6.15}$ & $\mathtt{0.1270783}$ & $\mathtt{0.1270783}$ & $\mathtt{0.1277410}$ & $\mathtt{1.08e7}$ & $\mathtt{1.63e\text{-}6}$ & $\mathtt{4.02}$ \\
            \hline
            \end{tabular}
            }
            \vskip1ex
            \caption{\small 
            	Intrinsic Killing fields identification using \eqref{eq:eigenpb} on the 2-1 torus. Here, $k$ is the order of the discrete Killing vector $\be_h$, level set $\phi_h$ and normal $\wt \bn_h$ appearing in ${\bs^{\mathrm{tan}}}$. 
            Since $\dim E = 1$ for the torus, we expect $\lambda_1$ to be $O(h^{2k})$ and $\lambda_2$ to be $O(1)$, which this experiment reproduces, allowing the Killing mode to be identified. The rates are computed with respect to ndof$^{1/2}$.}
            \label{tab:eigs}
        }
        \end{table}
        
We further illustrate the model and the method with two phase-separation experiments. The first considers a torus with constant material coefficients and examines the effects of mobility and viscosity on coarsening. The second is more practical, it uses a spherical liposome with experimentally informed parameters, phase-dependent viscosity, and degenerate mobility.
        
    \subsection{Phase separation on a torus} \label{subsec:torus}
\begin{figure}[ht]
	\centering
	\includegraphics[width=0.9\linewidth]{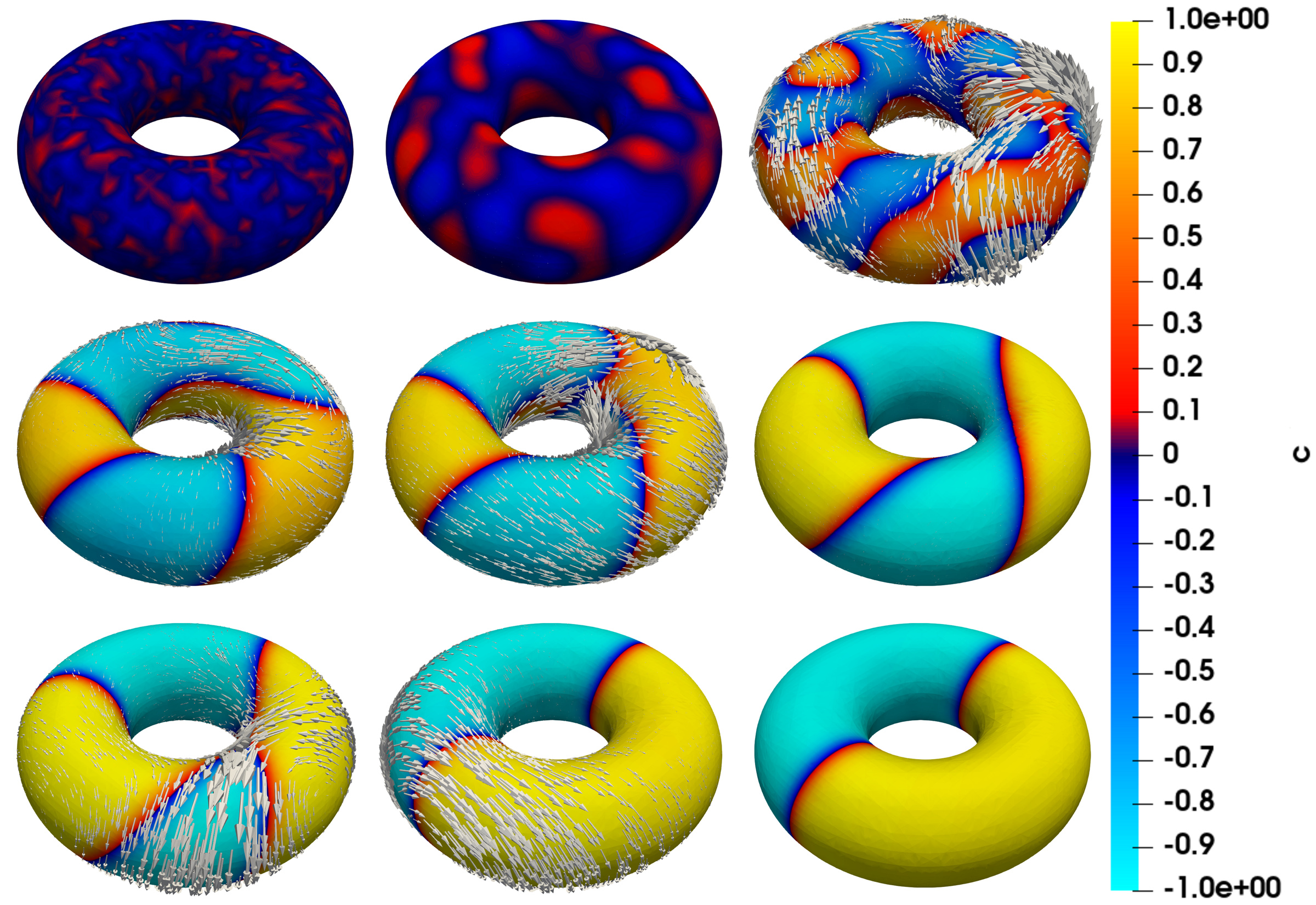}
	\caption{Phase separation and flow on the $2$-$1$ torus. Snapshots are ordered from left to right and top to bottom; colors show $c$ and arrows indicate the velocity.}
	\label{fig:torus}
\end{figure}

        We let $\G$ be the $2$-$1$ torus, $c_0$ be random noise {sampled from the uniform distribution on $[-0.1, 0.1]$} at the level of the mesh and set
        \[
            \eta=1, \quad \sigma=20, \quad M=0.02, \quad \varepsilon=0.15.
        \]
        This choice of parameters makes the interaction of the Stokes flow and the phase separation process more apparent. We use $(\bP_2,P_1,P_2,P_2)$ elements for $(\bu,p,c,\mu)$, with $h=0.2756$ and $138{,}000$ degrees of freedom, and the midpoint scheme with Rannacher startup and adaptive time stepping. Figure~\ref{fig:torus} shows the evolution up to $t=22$, with snapshots at $t=0$, $1.07$, $3.15$, $5.04$, $6.13$, $11.11$, $15.63$, $17.08$, and $22$. The initial small-scale fluctuations develop into larger domains. The snapshots around $t=6.13$ and $t=15.63$ show substantial flow near rearranging interfaces, followed by changes in domain connectivity. At later times, the evolution slows as the interface approaches a configuration that minimizes the interface perimeter under the total phase area conservation restriction.

        \begin{figure}
            \centering
            \scalebox{0.9}{%
            \begin{tikzpicture}
              \begin{groupplot}[
                  group style={
                    group size=1 by 2,
                    vertical sep=0.5cm,
                    xticklabels at=edge bottom,
                    xlabels at=edge bottom,
                  },
                  width=\linewidth,
                  xmin=0, xmax=22,
                  ymin=0, ymax=2800,
                  xlabel={$t$},
                  grid=major, major grid style={gridgray, line width=0.4pt},
                  axis lines*=left,
                  tick style={inkmuted}, axis line style={inkmuted},
                  yticklabel style={text width=2.2em, align=right},
                  ylabel style={at={(axis description cs:0,0.5)}, anchor=base, yshift=3.2em},
                  filter discard warning=false,
                ]
                
                \nextgroupplot[height=6cm, ylabel={$\cE_h[c_h]$}]
                \addrunM{1}{20}{0.005}{smlst}
                \addrunM{1}{20}{0.01}{smll}
                \addrunM{1}{20}{0.02}{midl}
                \addrunM{1}{20}{0.04}{larg}
                \addrunM{1}{20}{0.08}{huge}
             
                \nextgroupplot[height=6cm, ylabel={$\cE_h[c_h]$}]
                \addruneta{0.25}{20}{0.02}{smlst}
                \addruneta{0.5}{20}{0.02}{smll}
                \addruneta{1}{20}{0.02}{midl}
                \addruneta{2}{20}{0.02}{larg}
                \addruneta{4}{20}{0.02}{huge}

              \end{groupplot}
            \end{tikzpicture}%
            }

            \caption{Energy evolution on the torus for different mobilities $M$ (top) and viscosities $\eta$ (bottom).}
            \label{fig:torus-energy}
        \end{figure}
        
        Figure~\ref{fig:torus-energy} compares the energy histories when either $M$ or $\eta$ is varied with all other parameters fixed. All recorded energy histories are nonincreasing. Increasing mobility substantially accelerates the initial energy decay. Viscosity also affects the subsequent coarsening, but the curves do not remain ordered by viscosity, reflecting differences in the evolving domain configurations. Neither $M$ nor $\eta$ enters the energy functional; these parameters change the fluid and phases dynamics and the dissipation. The results show that the coarsening time and the route to lower-energy configurations depend on both phase diffusion and viscous transport.

    \subsection{A phase separated liposome} \label{subsec:liposome}
In this experiment, we consider a lipid composition studied in~\cite{WangPalzhanovQuainiOlshanskiiMajd2022lipid}: 1,2-dioleoyl-sn-glycero-3-phosphocholine (DOPC) and 1,2-dipalmitoyl-sn-glycero-3-phosphocholine (DPPC) in a $1:2$ molar ratio, with $25$ mol\% cholesterol. At room temperature, this mixture separates into liquid-ordered ($l_o$) and liquid-disordered ($l_d$) phases with substantially different surface viscosities.  We represent the liposome by a stationary sphere $\G$ of radius $R=5\,\mu\mathrm{m}$ and let $c$ denote the local area fraction of the liquid-ordered phase.

We adopt the material parameters tabulated in~\cite{WangPalzhanovQuainiOlshanskiiMajd2022lipid} for this composition at $15^\circ\mathrm{C}$, taking midpoint values where ranges are reported:
\begin{equation}\label{eq:mat-params}
    \begin{aligned}
        \eta_{lo} &= 6.5\times10^{-8}\,\mathrm{Pa\,s\,m},
        &\eta_{ld} &= 0.3\times10^{-8}\,\mathrm{Pa\,s\,m},\\
        D &= 10^{-5}\,\mathrm{cm^2\,s^{-1}},
        &a_D &= 0.71.
    \end{aligned}
\end{equation}
Here $D$ is the diffusivity parameter and $a_D$ is the target area fraction of the liquid-ordered phase. The surface densities reported there correspond to $\rho_{lo}\approx2.44\times10^{-6}\,\mathrm{kg\,m^{-2}}$ and $\rho_{ld}\approx1.95\times10^{-6}\,\mathrm{kg\,m^{-2}}$.

These parameter values support our modeling assumption of  neglecting membrane inertia. Indeed, using the diffusive velocity scale $U=D/R$, we obtain
\[
    \mathrm{Re}
    =\frac{\rho U R}{\eta}
    =\frac{\rho D}{\eta}
    \le
    \frac{2.44\times10^{-6}\cdot10^{-9}}
         {0.3\times10^{-8}}
    \approx8.1\times10^{-7}.
\]
We therefore use a Stokes--Cahn--Hilliard approximation of the surface Abels--Garcke--Gr\"un-type model employed in~\cite{WangPalzhanovQuainiOlshanskiiMajd2022lipid}. In the formal small-Reynolds-number limit, both velocity inertia and the density-dependent transport of momentum by phase diffusion are neglected.

Specifically, we set
\[
    \eta(c)=\eta_{lo}c+\eta_{ld}(1-c),
    \qquad
    M(c)=D c(1-c),
    \qquad
    f(c)=\tfrac14c^2(1-c)^2,
\]
where the argument of the mobility is clipped to $[0,1]$. With the chemical potential normalized as in~\eqref{eq:sch}, the equations read
\begin{equation}\label{eq:sch-unmatched}
    \begin{aligned}
        -2\bP\divG\bigl(\eta(c)D_\G\bu\bigr)+\nablaG p
        &= -\sigma c\nablaG\mu,\\
        \divG\bu &=0,\\
        \pa_t c+\bu\cdot\nablaG c
        -\divG\bigl(\varepsilon M(c)\nablaG\mu\bigr)
        &=0,\\
        \mu &=\varepsilon^{-1}f'(c)-\varepsilon\DeltaG c.
    \end{aligned}
\end{equation}
The factor $\varepsilon$ in the mobility compensates for the normalization of $\mu$, so that $D$ retains its original meaning. We impose $\bu\perp E$ to select the velocity without a rigid rotational component. The phase-dependent viscosity and degenerate mobility extend the constant-coefficient setting analyzed in this paper.

For the present simulation, we take the capillary coefficient $\sigma=1.5\,\mathrm{pN}$, using the midpoint of the reported line-tension range as its numerical value.  We use $\varepsilon=0.2\,\mu\mathrm{m}$, compared with $0.1\,\mu\mathrm{m}$ in that work, to broaden the diffuse transition for the mesh size $h=\varepsilon$. This is a numerical interface-width parameter. The initial phase field is obtained by assigning Bernoulli random values to the active mesh nodes with the  mean $a_D = 0.71$ chosen based on {thermodynamic properties} of the present lipid composition, cf.~\cite{ZhiliakovWangQuainiOlshanskiiMajd2021}. The resulting finite element field represents an initially mixed membrane with small-scale composition fluctuations.

We nondimensionalize using length $R$, time $t_0=R^2/D=0.025\,\mathrm{s}$, velocity $R/t_0=200\,\mu\mathrm{m\,s^{-1}}$, and viscosity $\eta_{ld}$. This gives
\[
    \hat\varepsilon=\frac{\varepsilon}{R}=0.04,
    \qquad
    \hat\sigma=\frac{\sigma R}{\eta_{ld}D}=2.5,
    \qquad
    \hat\eta(c)=1+
    \left(\frac{\eta_{lo}}{\eta_{ld}}-1\right)c.
\]
Thus $\hat\eta$ ranges from $1$ to approximately $21.7$ for $c\in[0,1]$. With $\hat\mu=\hat\varepsilon^{-1}f'(c)-\hat\varepsilon\hat\Delta c$, the dimensionless mobility is $\hat\varepsilon c(1-c)$, with the same clipping convention.

We discretize~\eqref{eq:sch-unmatched} using the proposed TraceFEM with $\bP_2$--$P_1$--$P_1$--$P_1$ elements for $(\bu,p,c,\mu)$ and adapt the midpoint scheme to the phase-dependent viscosity and mobility. The mesh size is $h=0.04R$, giving $758{,}335$ degrees of freedom. We use adaptive time stepping with Rannacher startup. Figure~\ref{fig:liposome} shows snapshots at $t\in\{0,0.095,0.47,1.88,2.95,15,22,24,72.8\}\,\mathrm{s}$. The liquid-disordered domains ($c\approx0$) coarsen within the liquid-ordered majority phase ($c\approx1$). The velocity fields at $t=0.47$ and $1.88\,\mathrm{s}$ accompany pronounced domain deformation, while the later snapshots show larger, more nearly circular domains and slower evolution. These results illustrate coarsening with the viscosity contrast and degenerate mobility of this composition and qualitatively are consistent with experimental data from~\cite{WangPalzhanovQuainiOlshanskiiMajd2022lipid}. Quantitative comparison with microscopy would require comparing domain statistics over multiple realizations of the initial data.

        \begin{figure}[ht]
            \centering
            \includegraphics[width=0.8\linewidth]{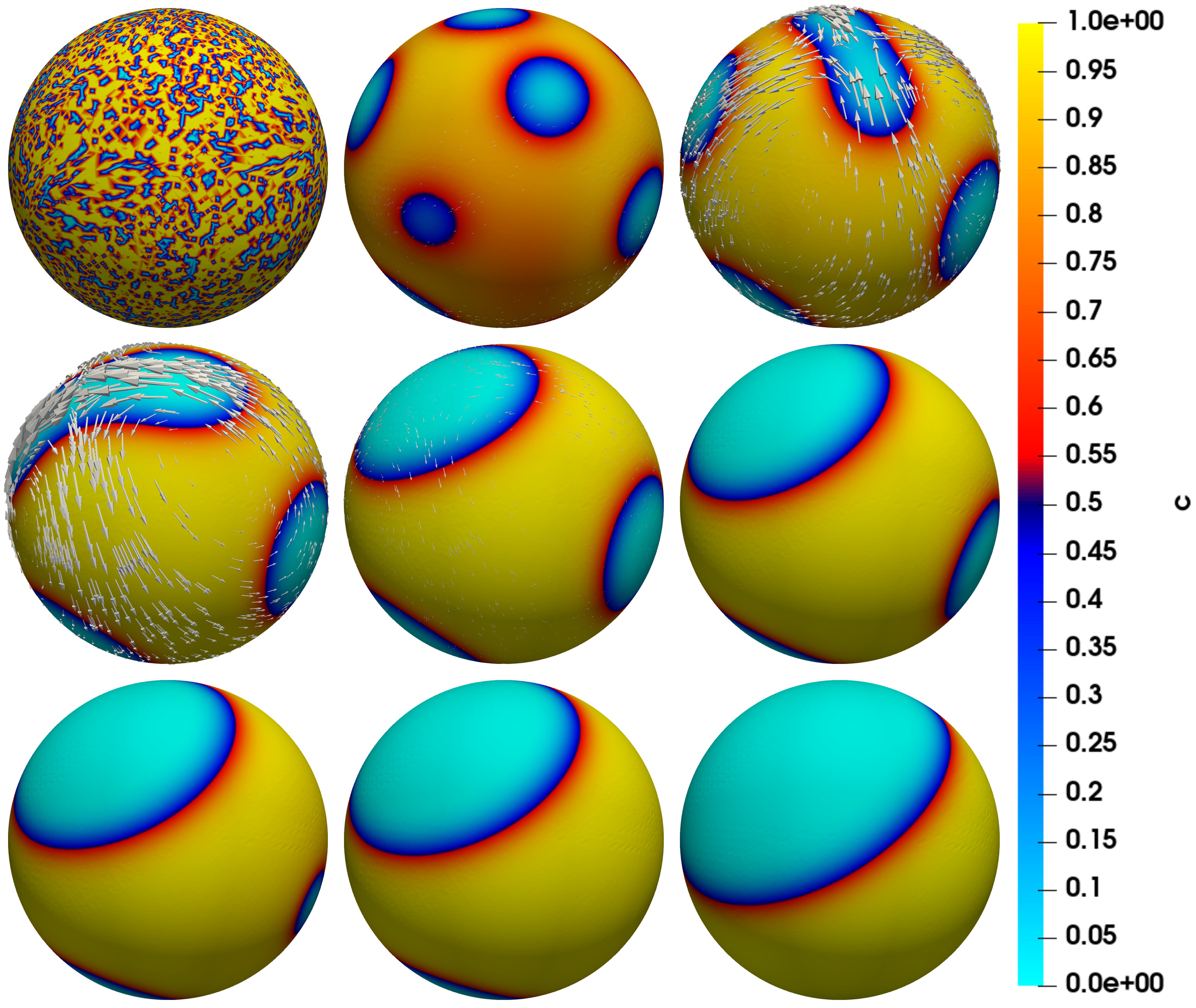}
            \caption{Phase separation on a DOPC--DPPC--cholesterol liposome of composition $25{:}50{:}25$ mol\%. Snapshots are ordered from left to right and top to bottom; colors show the liquid-ordered phase fraction $c$ and arrows indicate the velocity.}
            \label{fig:liposome}
        \end{figure}
        
\section*{Acknowledgments}
 AI tools (Claude Opus 5.5 and ChatGPT-6 Astra) were used in this work for the following purposes:
\begin{itemize}
\item to identify two ingredients of the analysis: testing the phase-field error equation with $f_0'(c_1)-f_0'(c_2)$ in \eqref{eq:chem-pot-ophi} to close the energy estimate, and introducing the centered trilinear form $b$ in \eqref{eq:b-form};
\item to condense the proof of Proposition~\ref{prop:maxreg-L2} on maximal parabolic $L^2$-regularity;
\item to develop code for the numerical experiments;
\item to check the proofs and proofread the manuscript.
\end{itemize}
The authors independently reviewed and validated the mathematical arguments, checked the numerical implementation and results, and take full responsibility for the content of the paper.

\appendix

\section{Auxiliary results} \label{sec:auxiliary-results}
    \subsection{Integration by parts identity} \label{subsec:integration-by-parts}
        The following result was proven in \cite[Lemma 2.1]{JankuhnOlshanskiiReusken2018} for $\G$ of class $C^3$. Below we give its weak version for $\G$ of class $C^2$ following the approach from \cite{BenavidesNochettoShakipov2025-vector}.
        \begin{lemma} \label{lem:bP-divG-nablaGT}
            Let $\G$ be of class $C^2$. Then for all $\bv, \bw \in \bH^1_t(\G)$, $((\nablaG \bv)^T, \nablaG \bw) = (\divG \bv, \divG \bw) - (\bR \bv, \bw)$.
        \end{lemma}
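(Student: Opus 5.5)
The plan is to prove the identity first for smooth tangential fields by a pointwise computation, and then extend it to $\bH^1_t(\G)$ by density. For $\G$ of class $C^2$, the density result quoted in Section~\ref{sec:preliminaries} says that $\bC^1_t(\G)$ is dense in $\bH^1_t(\G)$. Both sides of the identity are continuous bilinear forms on $\bH^1(\G)$, since $\bR$ is continuous on a $C^2$ surface and hence bounded. So it suffices to establish the identity for $\bv,\bw\in\bC^1_t(\G)$, and even for $\bv$ in a smaller dense class if that is more convenient.

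\textbf{Smooth case.} The classical route is the one in \cite[Lemma 2.1]{JankuhnOlshanskiiReusken2018}. Integrate by parts to move one derivative off $\bw$:
\[
((\nablaG\bv)^T,\nablaG\bw)=-(\bP\divG((\nablaG\bv)^T),\bw).
\]
The boundary terms vanish because $\G$ is closed, and the normal parts drop because $\bw$ is tangential. One then commutes the derivatives, $\bP\divG((\nablaG\bv)^T)=\nablaG\divG\bv+\bR\bv$. This is a Ricci-type identity, and a second integration by parts then gives the claim. However, this commutation needs second derivatives of $\bv$ together with derivatives of $\bB$, i.e.\ third derivatives of the surface, which is exactly why $C^3$ was assumed there.

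To stay within $C^2$, I would follow the weak approach of \cite{BenavidesNochettoShakipov2025-vector}. Write everything through the closest-point extension. For tangential $\bv$ we have $\nablaG\bv=\bP\nabla\bv^e\bP-(\bv\cdot\bn)\ldots$, which equals $\bP\nabla\bv^e\bP$ since $\bv\cdot\bn=0$. Differentiating $\bv\cdot\bn=0$ gives $\bn^T\nabla\bv^e\bP=-\bv^T\bB$. This expresses the normal components in terms of $\bB$ alone, which involves only first derivatives of $\bn$, so it is available for $C^2$ surfaces.

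The key step is then to take $\bv,\bw$ of class $C^2$ (or mollified in local charts) and to write
\[
\tr(\nablaG\bv\,\nablaG\bw)=\divG\big((\nablaG\bv)\bw\big)-\bw\cdot\divG(\nablaG\bv)^T.
\]
This is arranged so that the third-order geometric quantities cancel. Concretely, the difference $\divG\big((\nablaG\bv)\bw\big)-\divG\big((\divG\bv)\bw\big)$ equals a term $(\bR\bv,\bw)$ up to a tangential divergence, whose integral vanishes by the surface divergence theorem. The divergence theorem holds for $C^1$ tangential fields on $C^2$ surfaces, which is precisely the regularity we have. The curvature contribution comes from the symmetric combination $\tr(\bB)\bB-\bB^2$, generated when $\bP$ is differentiated; only the products $\bB\bB$ appear, never derivatives of $\bB$.

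\textbf{Main obstacle.} The main difficulty is exactly this bookkeeping: making sure that all terms involving derivatives of $\bB$ cancel pointwise, so that the identity requires only $\bn\in C^1$. My first attempt would be a local orthonormal tangential frame computation, which can be done in a chart with $C^1$ frame fields. In such a frame the difference of the two mixed second covariant derivatives of $\bv$ is the curvature term, since the Gauss equation gives the Riemann tensor as $\bB\wedge\bB$, and its contraction is $\bR$.

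If differentiability of the frame is insufficient, I would instead approximate $\G$ by smooth surfaces $\G_k\to\G$ in $C^2$. On each $\G_k$ I would apply the $C^3$ result, and then pass to the limit. The limit passage uses the convergence of the curvature $\bR_k\to\bR$ in $C^0$ and the transported $H^1$ norms of pulled-back fields. This is the fallback I expect to work robustly.
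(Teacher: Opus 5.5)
Your reduction to $\bC^1_t(\G)$ by density is fine, and you correctly locate the difficulty. However, the smooth-case argument you actually propose cannot be carried out on a $C^2$ surface. Your ``key step'' needs $(\nablaG\bv)\bw$ to be $C^1$, and really encodes the commutation $\bP\divG(\nablaG\bv)^T=\nablaG\divG\bv+\bR\bv$. Both it and your frame computation require second derivatives of a \emph{tangential} field $\bv$. Since $\bn$ is only $C^1$, tangential fields are in general only $C^1$; this is the loss of regularity noted in Section~\ref{sec:preliminaries}. Mollifying coefficients in a chart does not help, because the tangent frame $\partial_\alpha X$ of a $C^2$ parametrization $X$ is itself only $C^1$. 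A Ricci identity in a $C^1$ frame also needs derivatives of the Christoffel symbols, i.e.\ third derivatives of the embedding. The pointwise Gauss equation is not available at this regularity, and proving it in weak form is essentially the content of the lemma. As written, your sentence about $\divG((\nablaG\bv)\bw)-\divG((\divG\bv)\bw)$ is also vacuous, since that difference is itself a divergence and integrates to zero.

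Your fallback, approximating $\G$ by smooth $\G_k\to\G$ in $C^2$ and invoking the $C^3$ result, is a genuinely different route that can be made rigorous. One could take $\G_k$ to be a zero level set of a mollified $\dst$ and transport fields as $\bP_k\bv^e|_{\G_k}$. But then all the substance lies in proving convergence of every term under this transport, which you only assert.

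The missing idea, which keeps the direct computation within $C^2$, is to drop tangentiality of $\bv$ while second derivatives are present. The paper takes $\bv\in\bC^2(\G)$ arbitrary (e.g.\ restrictions of ambient $C^2$ fields, which are plentiful) and $\bw\in\bC^1_t(\G)$. It writes $[\bP\divG(\nablaG\bv)^T]_i=\uD_i\uD_lv_l-(\uD_i\uD_lv_m)n_mn_l-\tr(\bB)(\uD_iv_m)n_m$, which involves $\bn$ only through $\bB$, and integrates the middle term by parts once more. The curvature term produced by this integration by parts vanishes since $w_in_i=0$, and the terms containing $\uD_in_m$ or $\uD_iw_i$ vanish since $n_l\uD_lv_m=0$. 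This yields
\[
((\nablaG\bv)^T,\nablaG\bw)=(\divG\bv,\divG\bw)-(\uD_lv_m,n_mB_{il}w_i)+(\tr(\bB)(\uD_iv_m)n_m,w_i),
\]
which contains only first derivatives of $\bv$ and hence extends to all $\bv\in\bC^1(\G)$. Only then does one impose $\bv\cdot\bn=0$ and use $B_{ij}v_j=-(\uD_iv_j)n_j$, the identity you yourself derived. This turns the last two terms into $-(\bR\bv,\bw)$, and density in $\bH^1_t(\G)$ concludes.
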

        \begin{proof}
            Let $\bv = (v_i)_{i=1}^{d+1} \in \bC^2(\G)$ (not necessarily tangential), $\uD_j v := (\nablaG v)_j, 1 \leq j \leq d+1$, and $\bn = (n_i)_{i=1}^{d+1}$. It was shown in \cite[Appendix B, proof of Lemma 2.13]{BenavidesNochettoShakipov2025-vector} that $[\bP \divG (\nablaG \bv)^T]_i = \uD_i \uD_l v_l - (\uD_i \uD_l v_m) n_m n_l - \tr(\bB) (\uD_i v_m) n_m$. Here, we use summation over repeated indices. Let $\bw = (w_i)_{i=1}^{d+1} \in \bC^1_t(\G)$. On the one hand, $(\bP \divG (\nablaG \bv)^T, \bw) = - ((\nablaG \bv)^T, \nablaG \bw)$. On the other hand, using integration by parts we obtain
            \[
            \begin{split}
                (\bP \divG (\nablaG \bv)^T, \bw) & = (\nablaG \divG \bv, \bw) - (\uD_i \uD_l v_m, n_m n_l w_i) - (\tr(\bB) (\uD_i v_m) n_m, w_i) \\
                & = - (\divG \bv, \divG \bw) - (\uD_i \uD_l v_m, n_m n_l w_i) - (\tr(\bB) (\uD_i v_m) n_m, w_i).
            \end{split}
            \]
            For the middle term above, we use integration by parts identity $(\uD_i v_i, \varphi) = - (v_i, \uD_i \varphi) + (\tr(\bB) v_i \varphi, n_i)$ together with $w_i n_i = 0$ and $n_l \uD_l v_m = 0$ to get
            \[
            \begin{split}
                (\uD_i \uD_l v_m, n_m n_l w_i) & = - (\uD_l v_m, \uD_i(n_m n_l w_i)) + (\tr(\bB) (\uD_l v_m) n_m n_l w_i, n_i) \\
                & = - (\uD_l v_m, \uD_i(n_m n_l w_i)) \\
                & = - (\uD_l v_m, (\uD_i n_m) n_l w_i) - (\uD_l v_m, n_m (\uD_i n_l) w_i) - (\uD_l v_m, n_m n_l \uD_i w_i) \\
                & = - (\uD_l v_m, n_m (\uD_i n_l) w_i) = - (\uD_l v_m, n_m B_{il} w_i),
            \end{split}
            \]
            where in the end we used $\bB = \nablaG \bn$. Putting it all together yields
            \[
            ((\nablaG \bv)^T, \nablaG \bw) = (\divG \bv, \divG \bw) - (\uD_l v_m, n_m B_{il} w_i) + (\tr(\bB) (\uD_i v_m) n_m, w_i).
            \]
            Note that the above equality {involves only first derivatives} of $\bv, \bw$, hence it can be extended to $\bv \in \bC^1(\G)$. We now assume $\bv \in \bC^1_t(\G)$ and use $B_{ij} v_j = - (\uD_i v_j) n_j$ from \cite[Eq. (2.6)]{BenavidesNochettoShakipov2025-vector} valid for tangential fields. With this, we obtain $((\nablaG \bv)^T, \nablaG \bw) = (\divG \bv, \divG \bw) - ([\tr(\bB) \bB - \bB \bB] \bv, \bw)$. Recognizing that $\bR = \tr(\bB) \bB - \bB \bB$ and using density finishes the proof.
        \end{proof}

    \subsection{Maximal parabolic \texorpdfstring{$L^2$}{L2}-regularity} \label{subsec:maximal-regularity}
        We start with the following elementary result.
        \begin{proposition}[well-posedness of fourth-order parabolic problem] \label{prop:parab}
            Let $\G$ be of class $C^2$. Then $X_0 := L^2(I; H^2(\G)) \cap H^1(I; H^2(\G)') \hookrightarrow C^0([0,T]; L^2(\G))$. Moreover, $X_0 \ni u \mapsto (\pa_t u + \varrho \DeltaG^2 u, u(0)) \in L^2(I; H^2(\G)') \times L^2(\G)$ is a linear homeomorphism for each $\varrho > 0$, and the stability constants depend only on $\varrho, \G$ and $T$.
        \end{proposition}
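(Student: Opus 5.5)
The plan is to view the problem in the Lions framework of variational evolution equations with the Gelfand triple $H^2(\G)\hookrightarrow L^2(\G)\hookrightarrow H^2(\G)'$. Here $H^2(\G)$ is dense in $L^2(\G)$ because $C^2(\G)$ is dense. The operator $\DeltaG^2$ will be interpreted weakly through the bilinear form
\[
\mathfrak b(u,v):=\varrho\,(\DeltaG u,\DeltaG v)\qquad u,v\in H^2(\G),
\]
so that $\pa_t u+\varrho\DeltaG^2u=F$ in $L^2(I;H^2(\G)')$ means $\langle\pa_t u,v\rangle+\mathfrak b(u,v)=\langle F,v\rangle$ for all $v\in H^2(\G)$ and a.e.\ $t$.

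The embedding $X_0\hookrightarrow C^0([0,T];L^2(\G))$ is the standard Lions--Magenes lemma for this triple. It holds together with the identity $\tfrac{\d}{\d t}\|u\|^2=2\langle\pa_t u,u\rangle$, and the embedding constant depends only on $T$. This also makes the trace $u(0)\in L^2(\G)$ well defined and continuous on $X_0$, so the map in question is bounded.

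The key analytic input is a G\aa rding inequality for $\mathfrak b$ on $H^2(\G)$:
\[
\mathfrak b(u,u)+\varrho\lambda\|u\|^2\ \ge\ \alpha\|u\|_{H^2(\G)}^2 .
\]
I will derive it from $H^2$ elliptic regularity on $C^2$ surfaces, \cite[Theorem~1.2]{BenavidesNochettoShakipov2026-scalar}, as already used in Lemma~\ref{lem:reg-weak}. That result gives $\|u\|_{H^2(\G)}\lesssim\|\DeltaG u\|+\|u\|$. Squaring yields the inequality with $\lambda=1$ and $\alpha$ depending on $\varrho$ and $\G$. Nothing beyond $C^2$ regularity enters, which is why this step is the only place where geometry matters. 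I expect it to be the main point to check: the cited regularity must be available in exactly this a priori form $\|u\|_{H^2}\lesssim\|\DeltaG u\|+\|u\|$ for all $u\in H^2(\G)$, rather than only for zero-mean functions. If it is only available for zero-mean functions, I would split $u=\zm u+\mean u$, note that $\DeltaG u=\DeltaG\zm u$, and control the mean by $\|u\|$. The form $\mathfrak b$ is bounded and symmetric on $H^2(\G)$.

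Lions' theorem for the shifted problem will then give existence, uniqueness and the a priori bound. Concretely, I substitute $u=e^{\varrho\lambda t}w$, which turns the form into the coercive form $\mathfrak b+\varrho\lambda(\cdot,\cdot)$, and obtain a unique $w\in X_0$ for any data $(F,u_0)\in L^2(I;H^2(\G)')\times L^2(\G)$. The energy identity, tested with $w$ and combined with the G\aa rding bound and Young's inequality, gives
\[
\|w\|_{L^\infty(I;L^2)}+\|w\|_{L^2(I;H^2)}\lesssim\|F\|_{L^2(I;H^2(\G)')}+\|u_0\|.
\]
The equation itself then bounds $\pa_t w$ in $L^2(I;H^2(\G)')$. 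Undoing the exponential factor introduces a constant $e^{\varrho\lambda T}$, so all constants depend only on $\varrho$, $\G$ and $T$. This shows the map is a continuous bijection with continuous inverse, hence a linear homeomorphism; alternatively one can invoke the open mapping theorem once boundedness and bijectivity are known. Uniqueness also follows directly from the same energy identity with zero data and Gr\"onwall's inequality.
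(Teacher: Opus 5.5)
Your proposal is correct and follows essentially the same route as the paper. The paper uses the Gelfand triple $H^2(\G)\hookrightarrow L^2(\G)\hookrightarrow H^2(\G)'$ with the Lions--Magenes lemma for the embedding, and then invokes Lions' theorem together with $H^2$ elliptic regularity for $\DeltaG$. You spell out what the paper leaves implicit: the G\aa rding inequality derived from $\|u\|_{H^2(\G)}\lesssim\|\DeltaG u\|+\|u\|$, the exponential shift, and the energy bound for the inverse.
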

        \begin{proof}
            First note that for $\G$ of class $C^2$, $H^2(\G)$ is dense in $L^2(\G)$. Thus, setting $V = H^2(\G), H = L^2(\G)$ yields a Gelfand triple $V \hookrightarrow H \cong H' \hookrightarrow V'$, whence the classical Lions--Magenes lemma yields the asserted embedding. Then {Lions' theorem} together with $H^2(\G)$-regularity for $\DeltaG$ guarantees the second result. This finishes the proof.
        \end{proof}

        Let $\{(\lambda_i, \psi_i)\}_{i=0}^\infty$ be the eigendecomposition of $-\DeltaG$, with $\{\psi_i\}$ being an $L^2(\G)$-orthonormal basis and $\lambda_0 = 0$. We define $\Lambda := (\operatorname{id} - \DeltaG)^{1/2}$ spectrally:
        \begin{equation*}
            \Lambda v := \sum_{i=0}^\infty \sqrt{1 + \lambda_i} (v, \psi_i) \psi_i.
        \end{equation*}
        We next define the following inner products for integer $s \geq 0$: 
        \begin{equation*}
            (u, w)_s := (\Lambda^s u, \Lambda^s w), \qquad \|u\|_s^2 := (u,u)_s.
        \end{equation*}
        For $\G$ of class $C^{m+4}, m \geq 0$, elliptic regularity \cite[Theorem~1.2]{BenavidesNochettoShakipov2026-scalar} shows that $\|\cdot\|_s$ is an equivalent norm on $H^s(\G)$ for $0 \leq s \leq m+4$. By spectral calculus, $\DeltaG$ and $\Lambda$ commute. Together with the $L^2(\G)$-symmetry of $\DeltaG$, this implies $(\DeltaG u, w)_s = (u, \DeltaG w)_s$ for all $u, w \in H^{s+2}(\G), 0 \leq s \leq m+2$.        
        
        This brings us to the following key embedding:
        \begin{equation} \label{eq:lions-magenes}
            X_{m+2} := L^2(I; H^{m+4}(\G)) \cap H^1(I; H^m(\G)) \hookrightarrow C^0([0,T]; H^{m+2}(\G)), \qquad m \geq 0.
        \end{equation}
        To see this, define $V := H^{m+4}(\G)$ and $H := H^{m+2}(\G)$ equipped with the inner product $(\cdot,\cdot)_{m+2}$, and use $V \hookrightarrow H \cong H' \hookrightarrow V' \cong H^m(\G)$, where the first embedding is continuous and dense. The identification $V' = H^{m+4}(\G)' \cong H^m(\G)$ follows from elliptic regularity for $\G$ of class $C^{m+4}$ in the following way:
        \begin{equation*}
            \Lambda^4 = (\operatorname{id} - \DeltaG)^2 : H^{m+4}(\G) \to H^m(\G) \quad \text{ and } \quad \Lambda^{2m} = (\operatorname{id} - \DeltaG)^m : H^m(\G) \to H^m(\G)'
        \end{equation*}
        are linear homeomorphisms. Concatenating the two and dualizing yields the asserted identification $V' \cong H^m(\G)$. Explicitly, $g \in H^m(\G)$ is identified with the functional $v \mapsto (g, \Lambda^4 v)_m$ (for $v \in V = H^{m+4}(\G)$), which for $g \in H = H^{m+2}(\G)$ equals $(g, v)_{m+2}$, so the identification is compatible with the embedding $H \hookrightarrow V'$. Embedding \eqref{eq:lions-magenes} then follows from the classical Lions--Magenes lemma.
    
        \begin{proof}[Proof of Proposition \ref{prop:maxreg-L2}]
            Recall that $m \geq 0$ is an integer. We work in the Hilbert space $H^m(\G)$ equipped with the inner product $(\cdot,\cdot)_m$. We define the following energy functional on $H^{m+2}(\G)$:
            \begin{equation*}
                \varphi(u) := \frac{\varrho}{2} \|\DeltaG u\|_m^2.
            \end{equation*}
            Extended by $+\infty$ to $H^m(\G)$, $\varphi$ is convex, lower semicontinuous and nonnegative. For all $u,v \in H^{m+2}(\G)$, we have
            \begin{equation*}
                \frac{\d}{\d s} \varphi(u + s v) \big|_{s=0} = \varrho (\DeltaG u, \DeltaG v)_m.
            \end{equation*}
            Therefore, by the symmetry of $\DeltaG$ with respect to $(\cdot,\cdot)_m$, $\pa \varphi = \varrho \DeltaG^2$ with $D(\pa\varphi) = \{u \in H^{m+2}(\G) : \DeltaG^2 u \in H^m(\G)\}$, which coincides with $H^{m+4}(\G)$ by elliptic regularity. Given $f \in L^2(I; H^m(\G))$ and $u_0 \in H^{m+2}(\G) = D(\varphi)$, \cite[Th\'eor\`emes~3.4 and~3.6]{Brezis1973} yield a unique solution of the abstract parabolic problem
            \begin{equation*}
                u' + \pa \varphi (u) = f, \qquad u(0) = u_0,
            \end{equation*}
            with $\|u'\|_{L^2(I; H^m(\G))} \lesssim \|f\|_{L^2(I; H^m(\G))} + \sqrt{\varphi(u_0)}$. Hence 
            \begin{equation*}
                \pa \varphi (u) = \varrho \DeltaG^2 u = f - u' \in L^2(I; H^m(\G)),
            \end{equation*}
            and elliptic regularity gives $u \in L^2(I; H^{m+4}(\G))$ with $\|u\|_{X_{m+2}} \leq C(\|f\|_{L^2(I; H^m(\G))} + \|u_0\|_{H^{m+2}(\G)})$, i.e., the inverse map is bounded. Uniqueness in $X_{m+2}$ follows from Proposition \ref{prop:parab}, since $X_{m+2} \subseteq X_0$. Finally, the map itself is bounded thanks to the embedding $X_{m+2} \hookrightarrow C^0([0,T]; H^{m+2}(\G))$.
        \end{proof}
        
\bibliographystyle{ieeetr}
\bibliography{references}

\end{document}